\definecolor{Gray}{gray}{0.9}
\newcolumntype{g}{>{\columncolor{Gray}}c}
\newcolumntype{M}{V{6cm}} 
\newcolumntype{N}{V{12cm}} 
\tikzset{every picture/.style={baseline=-.65ex} }
\tikzset{ext/.style={circle, draw,inner sep=1pt},int/.style={circle,draw,fill,inner sep=1pt},nil/.style={inner sep=1pt}}
\tikzset{every loop/.style={draw}}
  \tikzset{->-/.style={decoration={
    markings,
    mark=at position .5 with {\arrow{>}}},postaction={decorate}}}
\newtheorem{thm}{Theorem}[section]
\newtheorem{prop}[thm]{Proposition}
\newtheorem{cor}[thm]{Corollary}
\newcommand{\ZZ}{\mathbb{Z}}
\newcommand{\CC}{\mathbb{C}}
\newcommand{\QQ}{\mathbb{Q}}
\newcommand{\Q}{\mathbb{Q}}
\newcommand{\bS}{\mathbb{S}}
\newcommand{\cF}{\mathcal{F}}
\newcommand{\comment}[1]{}
\newcommand{\sgn}{\operatorname{sgn}}
\newcommand{\M}{\mathcal{M}}
\newcommand{\MM}{\overline{\mathcal{M}}}
\newcommand{\bbS}{\mathbb{S}}
\newcommand{\bGK}{\overline{\GK}{}}
\newcommand{\hide}[1]{}
\newtheorem{Definition}[thm]{Definition}
\newenvironment{definition}
  {\begin{Definition}}{\end{Definition}}
\newtheorem{Example}[thm]{Example}
\newtheorem{Fact}[thm]{Fact}
\newtheorem{Theorem}[thm]{Theorem}
\newtheorem{Lemma}[thm]{Lemma}
\newenvironment{lemma}
  {\begin{Lemma}}{\end{Lemma}}
\newtheorem{Remark}[thm]{Remark}
\newenvironment{remark}
  {\begin{Remark}\rm}{\end{Remark}}
\newtheorem{Proposition}[thm]{Proposition}
\newtheorem{Corollary}[thm]{Corollary}
\newenvironment{corollary}
  {\begin{Corollary}}{\end{Corollary}}
\newtheorem{Question}[thm]{Question}
\newenvironment{question}
  {\begin{Question}\rm}{\end{Question}}
\newtheorem{Conjecture}[thm]{Conjecture}
\newenvironment{conjecture}
  {\begin{Conjecture}\rm}{\end{Conjecture}}
\newcommand{\sX}{{\underline{X}}}
  \theoremstyle{definition} 
\theoremstyle{remark}
\newcommand{\gr}{\mathrm{gr}}
\newcommand{\GC}{\mathsf{GC}}
\newcommand{\fHGC}{\mathsf{fHGC}}
\newcommand{\HGC}{\mathsf{HGC}}
\newcommand{\Graphs}{\mathsf{Graphs}}
\newcommand{\GK}{\mathsf{GK}}
\newcommand{\lp}{\text{-loop}}
\DeclareMathOperator{\Sym}{Sym}
\title{Weight 11 compactly supported cohomology of moduli spaces of curves}
\author{Sam Payne}
\address{UT Department of Mathematics, 2515 Speedway, RLM 8.100, Austin, TX 78712, USA}
\email{sampayne@utexas.edu}
\author{Thomas Willwacher}
\address{Department of Mathematics\\ ETH Zurich\\ R\"amistrasse 101 \\ 8092 Zurich, Switzerland}
\email{thomas.willwacher@math.ethz.ch}
\begin{document}

\begin{abstract}
We study the weight 11 part of the compactly supported cohomology of the moduli space of curves $\M_{g,n}$, using graph complex techniques, with particular attention to the case $n = 0$.  As applications, we prove new nonvanishing results for the cohomology of $\M_g$, and exponential growth with $g$, in a wide range of degrees.  \end{abstract}

\thanks{
SP has been supported by NSF grants DMS--2001502 and DMS--2053261. TW has been supported by the NCCR SwissMAP, funded by the Swiss National Science Foundation.}

\maketitle

\section{Introduction}

The weight 0 and weight 2 graded parts of the compactly supported cohomology of the moduli spaces of curves $\M_{g,n}$ are naturally identified with the cohomology of combinatorially defined graph complexes \cite{CGP1, CGP2,PayneWillwacher} that resemble graph complexes arising in algebraic topology.  Meanwhile, the graded parts in weights 1, 3, 5, 7, and 9 all vanish, because the rational cohomology groups of the Deligne-Mumford compactifications $\MM_{g,n}$ vanish in these degrees \cite{BFP}.  This paper is devoted to studying the lowest nontrivial odd weight graded part of the cohomology of $\M_{g,n}$, in weight 11.  Our main technical result (Proposition~\ref{prop:Bgn intro}) identifies $\gr_{11} H^\bullet_c(\M_{g,n})$ with the cohomology of another combinatorial graph complex resembling those arising in the embedding calculus \cite{FTW2}.  This is similar in spirit to the aforementioned results in weights 0 and 2, and yet the details are substantially different in each weight.  

As an application of this construction, we give new nonvanishing results for the cohomology of $\M_g$ by showing that the $11$th weight graded piece is nonzero.  These results are proved by relating the weight 11 cohomology to the weight 0 cohomology.  Let $\Delta := H^{11}(\MM_{1,11})$, which we view as a 2-dimensional $\Q$-vector space with its Hodge structure or $\ell$-adic Galois representation of weight 11.  It follows from \cite{CanningLarsonPayne} that $\gr_{11} H^\bullet_c(\M_{g,n})$ is isomorphic to a direct sum of copies of $\Delta$.

\begin{thm}\label{thm:emb intro}
  Let $V_g^{r,k}$ denote the degree $k$ and genus $g$ part of the $r$-fold symmetric product
    \[
  \Sym^r\bigg(\bigoplus_{h\geq 3} W_0H_c^\bullet(\M_h)\bigg).  
  \]
Then there is an injective map
  \begin{equation} \label{eq:inj}
     \big( V^{10,k-21}_{g-1} 
  \oplus
  V^{10,k-22}_{g-2} 
  \oplus
  V^{9,k-22}_{g-3} 
  \oplus
  V^{6,k-22}_{g-5} 
  \oplus
  V^{3,k-22}_{g-7}  \big) \otimes \Delta 
   \to \gr_{11}H^k_c(\M_g).
  \end{equation}
\end{thm}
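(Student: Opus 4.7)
My plan is to invoke Proposition~\ref{prop:Bgn intro}, which identifies $\gr_{11} H^\bullet_c(\M_g)$ with the cohomology of an explicit combinatorial graph complex $B_g$ whose generators are decorated graphs carrying one distinguished vertex of genus $1$ labeled by a class in $\Delta = H^{11}(\MM_{1,11})$. The map \eqref{eq:inj} is built at the chain level by sending a symmetric tensor of weight-$0$ classes on moduli spaces $\M_{h_j}$ (with $h_j \geq 3$), together with a class in $\Delta$, to the decorated graph obtained by attaching those weight-$0$ pieces to a fixed \emph{kernel graph} containing the $\Delta$-decorated vertex and the appropriate number of attachment slots. The five direct summands in \eqref{eq:inj} correspond to five explicit families of kernel templates, each encoded by a combinatorial triple: the number of attachment slots (namely $10,10,9,6,3$), the genus contributed by the kernel (fixing the subscript shifts $g-1, g-2, g-3, g-5, g-7$), and the cohomological degree contributed by the kernel edges and $\Delta$-decoration (fixing the shifts $k-21$ or $k-22$).

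My first task is to enumerate the admissible kernel templates. The degree, genus, and valence constraints imposed by Proposition~\ref{prop:Bgn intro}—in particular, that each non-distinguished vertex have valence at least $3$, zero intrinsic genus, and zero intrinsic cohomological degree, so that it serves as a legitimate insertion point for a weight-$0$ class from $W_0 H_c^\bullet(\M_{h_j})$—should reduce the combinatorics to a finite case analysis, and a direct bookkeeping argument should show that the five listed shapes exhaust the possibilities. The second task is to verify that this assignment is indeed a chain map: the differential on $B_g$ acts by edge contractions and insertions, which on the chosen generators either preserves the kernel-graph structure up to sign or factors through the tropical/CGP graph-complex differential computing $W_0 H_c^\bullet(\M_{h_j})$; either way, cycles map to cycles, and so the construction descends to cohomology.

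The main obstacle is proving injectivity on cohomology. My approach is to pass to the associated graded for the filtration of $B_g$ by the isomorphism class of the \emph{skeleton} obtained by contracting every maximal subgraph of weight-$0$-decorated vertices. Because the five kernel templates are pairwise non-isomorphic as skeletons and because the number of attachment points, the kernel genus, and the kernel degree are each constant on cycles in a fixed summand of the associated graded, the contributions of the five templates land in distinct summands. Within each summand, injectivity reduces to injectivity of a tensor product of CGP-style graph complex classes, which follows from the known identification of $W_0 H_c^\bullet(\M_h)$ with reduced tropical graph cohomology together with a K\"unneth-type argument for symmetric products. The subtle step, which I expect to be the hardest, is ruling out that a template-represented class could be cobounded from a generator of $B_g$ lying outside the chosen family—equivalently, showing the filtration is strictly preserved by the differential on the relevant cycles; I would address this by controlling the dimensions of the associated graded pieces and by tracking the number of $\Delta$-decorated vertices and the edge-degree $\ell$, both of which are differential-invariant in the appropriate associated graded.
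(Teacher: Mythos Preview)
Your proposal has two substantial gaps, both at points you flagged as potentially delicate but underestimated.

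\textbf{The chain map fails without correction terms.} The differential on $B_g$ is not just vertex splitting: it has a piece $\delta_\omega$ that relabels an $\omega$-leg by $\epsilon$, and a piece $\delta_s^\circ$ that fuses together a subset of the $\epsilon$- and $\omega$-legs and re-attaches a new leg. Your ``attach weight-$0$ pieces to a fixed kernel'' map---which is essentially the leading term $\gamma_1^\omega\cup\cdots\cup\gamma_r^\omega\cup(\text{tripods})$ the paper calls $\Phi_k$---is \emph{not} closed under these. Applying $\delta_\omega$ produces graphs with an $\epsilon$-leg on some $\gamma_j$ or on a tripod, which are not in your image, and nothing in your description cancels them. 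The paper handles this by building the map in two rather different ways for the two groups of summands: the $\Sym^{10}$ summands use an $L_\infty$-isomorphism trivialising a secondary (degree $+1$) Lie bracket on $\GC_0$, composed with an explicit map $G$ containing higher-order insertion terms $\gamma_{j_1}^1\circ\cdots\circ\gamma_{j_k}^1\circ(\cdots)$; the $\Sym^{9},\Sym^{6},\Sym^{3}$ summands use a separate map $E_k$ with four explicit correction terms (involving a $5$-pod, a $\Psi$ operation attaching two extra $\omega$-legs, and pre-Lie products $\gamma_i\bullet\gamma_j$), and even then the result is only a chain map after projecting away the pieces with $\leq 10$ $\omega$-legs. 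None of this structure is visible in the ``kernel template'' picture.

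\textbf{The injectivity argument does not separate the summands or rule out coboundaries.} Your proposed invariant ``number of $\Delta$-decorated vertices'' is always $1$ in $B_g$, so it separates nothing. The skeleton filtration you describe is not preserved by $\delta_s^\circ$, which can merge blown-up components. The paper's injectivity argument is quite different: it composes with the projection $\pi$ to the top slice $C_{g,10\omega}$ (where only $\delta_s$ survives), identifies the cohomology of that slice via a quasi-isomorphism $K$ with $(\Q\oplus\Q\alpha\oplus\Q\beta\oplus\Q\alpha\beta)\otimes H(\fHGC)$ using a nontrivial result from the hairy-graph literature, and then checks that the two families of maps land in the disjoint pieces $\Q\oplus\Q\alpha$ and $\Q\alpha\beta$ respectively. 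Injectivity of each piece then reduces to a known theorem about the hairy graph complex. Your K\"unneth-for-symmetric-products argument does not supply this: the issue is not K\"unneth but rather understanding $H(C_{g,10\omega},\delta_s)$, which requires the external input.
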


It was previously known that $\dim_\Q W_0H^{2g + k}_c(\M_g)$ grows at least exponentially with $g$ for $k \in \{ 0, 3 \}$ and is nonzero for $k = 7$ and $g = 10$ \cite{CGP1}.  From this, together with previously known nonvanishing results in weights 0 and 2, we have the following corollary.

\begin{corollary} \label{cor:exp-growth}
The dimension of $H^{2g+k}_c(\M_g)$ grows at least exponentially with $g$ 
for each fixed $0 \leq k \leq 53$, except possibly for $k \in \{ 1, 4, 7,20,51 \}$.
\end{corollary}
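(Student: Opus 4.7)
The plan is to combine the injection of Theorem~\ref{thm:emb intro} with the exponential growth of $\dim W_0 H^{2h+j}_c(\M_h)$ for $j\in\{0,3\}$ established in \cite{CGP1}, together with already known exponential growth results in weights $0$ and $2$. For the weight 11 contribution, I fix one of the five summands $V^{r,k-a}_{g-b}\otimes \Delta$ and construct exponentially many linearly independent elements of $V^{r,k-a}_{g-b}$ as products
\[
y \cdot x_1 \cdots x_{r-1}\ \in\ \Sym^r\Bigl(\bigoplus_{h\ge 3} W_0 H^\bullet_c(\M_h)\Bigr),
\]
where $x_1,\ldots,x_{r-1}$ are fixed nonzero weight-0 classes on small-genus moduli spaces (typically $\M_3$, at known nonvanishing degrees), and $y$ varies in $W_0 H^{2h_r+\ell}_c(\M_{h_r})$ for $\ell\in\{0,3\}$ and a ``growing'' genus $h_r$ chosen so that $h_r+h_1+\cdots+h_{r-1}=g-b$. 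Since $\dim W_0 H^{2h_r+\ell}_c(\M_{h_r})$ grows at least exponentially in $h_r$ by \cite{CGP1}, and since $h_r$ grows linearly in $g$, pushing forward through the injection produces an exponentially growing subspace of $\gr_{11}H^{2g+k}_c(\M_g)$ for each $k$ that the degree arithmetic permits.

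Next I enumerate the reachable $k$-values. For each summand $(r,a,b)\in\{(10,21,1),(10,22,2),(9,22,3),(6,22,5),(3,22,7)\}$, each assignment of degree shifts on the $r-1$ fixed small factors (drawn from the known nonvanishing degrees of $W_0 H^\bullet_c(\M_h)$ for small $h$), and each $\ell\in\{0,3\}$, the total-degree equation
\[
2g+k-a\ =\ 2(g-b)+\sum(\text{shifts})+\ell
\]
determines $k=a-2b+\sum(\text{shifts})+\ell$. This yields a set $S_{11}$ of $k$-values for which weight $11$ grows exponentially. Combining with $S_0=\{0,3\}$ from the weight-0 exponential growth of \cite{CGP1} and the set $S_2$ of $k$-values where $\gr_2 H^{2g+k}_c(\M_g)$ is already known to grow exponentially, one then verifies
\[
S_0\cup S_2\cup S_{11}\supseteq \{0,1,\ldots,53\}\setminus\{1,4,7,20,51\}.
\]

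The main obstacle is not conceptual but arithmetic bookkeeping: one must track which small-genus weight-0 cohomology classes are available as fixed factors, carry out the enumeration carefully for each of the five summands, and verify the final covering statement degree by degree. A mild subtlety is that a single growing factor only contributes two arithmetic progressions of reachable $k$-values per summand (one for each $\ell\in\{0,3\}$), so one relies on varying the shifts on the $r-1$ fixed small factors — and occasionally using fixed factors in genera beyond $3$ — to fill the range of $k$-values densely enough that, together with $S_0\cup S_2$, the five exceptional values are all that remain uncovered.
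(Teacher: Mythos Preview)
Your approach is correct and essentially identical to the paper's: both construct products in $\Sym^r$ with one factor varying over an exponentially growing weight-0 family (shift $\ell\in\{0,3\}$) and the remaining factors fixed, then enumerate the reachable $k$-values from each of the five summands and combine with the weight-0 and weight-2 contributions. The paper packages the enumeration via the sets $U_r=\{0,3,\dots,3r\}\cup\{7,10,\dots,4+3r\}$; the second interval comes from allowing exactly one fixed factor to be the single nonzero class in $W_0H_c^{27}(\M_{10})$ (shift $7$, odd degree, so usable at most once by the Koszul sign rule), which you allude to with ``fixed factors in genera beyond $3$'' but should make explicit, since without it the values $k\in\{10,13,23,26,29,32,35,38,41,44,47,50,52,53\}$ remain uncovered.
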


This corollary is proved using only what is already known about the nonvanishing and growth of $W_0H^\bullet_c(\M_g)$; it is expected that the weight zero cohomology is much larger than what is currently known. By \cite{PayneWillwacher} and Theorem~\ref{thm:emb intro}, each improvement in our understanding in weight zero will lead to corresponding improvements in weights 2 and 11, respectively.  Even a single new nonvanishing weight zero cohomology group could significantly extend the range of $k$ in which $H^{2g+k}_c(\M_g)$ is known to grow at least exponentially with $g$.  

\begin{conjecture}
The dimension of $H^{2g+k}_c(\M_g)$ grows at least exponentially with $g$ for all but finitely many non-negative  integers $k$.
\end{conjecture}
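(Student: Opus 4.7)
The final statement is a conjecture rather than a theorem, so I outline a strategy toward it rather than a complete proof. The plan is to bootstrap the three currently known mechanisms producing cohomology of $\M_g$ --- weight zero via graph complexes \cite{CGP1}, weight two via \cite{PayneWillwacher}, and weight eleven via Theorem~\ref{thm:emb intro} --- starting from a cofinite nonvanishing input in weight zero.

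First, I would aim to establish a cofinite set $S \subseteq \ZZ_{\geq 0}$ such that for each $j \in S$ there exists $h = h(j)$ with $W_0 H_c^{2h+j}(\M_h) \neq 0$. Even a single such class per $j$ suffices as input for the bootstrap. Indeed, once one fixes such a class, the symmetric-product expressions appearing in Theorem~\ref{thm:emb intro} (and in its weight-two counterpart) have dimensions that grow at least exponentially in $g$: one may take increasingly many factors carrying the given class together with additional nonzero inputs from low-genus weight-zero cohomology, and the resulting multi-graph configurations lie in distinct bi-gradings.

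Second, I would combine the resulting degree shifts. Each of the three weight-level constructions contributes exponential growth in a set of degrees $k$ obtained from $S$ by explicit affine shifts depending on the construction (for instance, the shifts $k \mapsto k-21, k-22, \ldots$ visible in \eqref{eq:inj}). It is a purely combinatorial matter to verify that, given a cofinite input set $S$, the union of the resulting image sets over the three weights is cofinite in $\ZZ_{\geq 0}$. This is essentially the same bookkeeping that yields the ranges in Corollary~\ref{cor:exp-growth}, now carried out in the abstract.

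The main obstacle is the first step. Cofinite nonvanishing of $W_0 H_c^\bullet(\M_h)$ translates into cofinite nonvanishing for a Kontsevich-style commutative graph complex, where currently only scattered results are available and no construction is known that produces classes in cofinitely many degrees. Without such a new input, Theorem~\ref{thm:emb intro} amplifies what is known but cannot create new degrees from scratch; with it, the conjecture follows immediately from the combinatorial step above. For this reason the conjecture appears, at present, to be essentially equivalent to a cofinite nonvanishing conjecture for the relevant graph cohomology.
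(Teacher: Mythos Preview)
This statement is a conjecture, and the paper offers no proof of it; you are right to present a strategy rather than a proof. Your outline matches the heuristic the authors themselves articulate just before stating the conjecture: they note that each new nonvanishing weight-zero class propagates, via the weight~2 construction of \cite{PayneWillwacher} and the weight~11 injection of Theorem~\ref{thm:emb intro}, to new values of $k$ for which $H^{2g+k}_c(\M_g)$ grows exponentially. Your bootstrap is exactly this mechanism, and the identification of the bottleneck---producing weight-zero classes in cofinitely many degrees $j$---is the point.

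One small caveat: your concluding claim that the conjecture is ``essentially equivalent'' to cofinite nonvanishing of weight-zero graph cohomology overstates things. The implication you sketch runs only one way: cofinite weight-zero input plus the bootstrapping would yield the conjecture, but the conjecture could also hold via contributions from weights not yet analyzed, or by methods unrelated to graph complexes. It would be more accurate to say that the conjecture would \emph{follow from} such a cofinite nonvanishing statement, which is itself open.
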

 
\noindent For $k = 4$ and $20$, we note that $H^{10}_c(\M_3)$ and $H^{34}_c(\M_7)$ are nonzero. Each is Poincar\'e dual to a corresponding $H^2$, which contains a nonzero class $\kappa$.  For $k = 7$, as noted above, $H^{27}_c(\M_{10})$ is nonzero in weight 0.  However, there is no $g$ for which $H^{2g+1}_c(\M_g)$ is known to be nonzero.

\begin{question} \label{q:vanish}
Does $H^{2g+1}_c(\M_g)$ vanish for all $g$? 
\end{question}

\begin{remark}
The cohomology group $H^{2g+1}(\GC_0^{g\lp})$ of the loop order $g$ part of Kontsevich's graph complex injects into $H^{2g+1}_c(\M_g)$ \cite{CGP1}, so a positive answer to Question~\ref{q:vanish} would imply that this cohomology group vanishes for all $g$. This is equivalent to the vanishing of $H^1(\GC_2)$, the first cohomology group of a degree shifted version of $\GC_0$. The vanishing of this cohomology group is a well-known open problem in homological algebra and algebraic topology; it is of significant interest because $H^1(\GC_2)$ is the space of obstructions to a variety of problems, including the existence of Drinfeld associators \cite{Drinfeld}, the existence of formality maps in deformation quantization \cite{KontsevichFormality}, and the rational intrinsic formality of the little disks operad \cite{FresseWillwacherIntrinsic}.  
\end{remark}

We now explain how Corollary~\ref{cor:exp-growth} follows from Theorem~\ref{thm:emb intro} before discussing the graph complexes that arise in our study of the weight 11 cohomology. 

\begin{proof}[Proof of Corollary~\ref{cor:exp-growth}]
Recall that $\dim_\Q W_0H^{2g + k}_c(\M_g)$ grows at least exponentially with $g$ for $k \in \{ 0, 3 \}$ and is equal to 1 for $k = 7$ and $g = 10$ \cite{CGP1, PayneWillwacher}.  It then follows from Theorem~\ref{thm:emb intro} that, for fixed $k$ and $r\geq 2$, $\dim_\Q V^{r, 2g + k}_g$ grows at least exponentially with $g$ whenever $k$ is in the set
\[
U_r =
\{ 0,3,6,\dots, 3r  \} 
\cup \{7,10,13,\dots, 4+3r \}. 
\]
Note that, by the sign conventions for graded vector spaces recalled in \S\ref{sec:dg preliminaries}, below, the image of $v \otimes v$ in $\Sym^2 V$ is zero when $\deg(v)$ is odd.  In particular, since $W_0 H_c^{27}(\M_{10})$ is $1$-dimensional and of odd degree, its symmetric powers vanish.

Taking into account the degree shifts, the injection \eqref{eq:inj} yields at least exponential growth of $\dim_\Q H^{2g+k}_c(\M_g)$ for $k$ in the set 
\[
  (19+ U_{10}) \cup 
  (18+ U_{10}) \cup
  (16+ U_{9}) \cup
  (12+ U_{6}) \cup
  (8+ U_{3})\, .
\]
Similarly, from \cite{PayneWillwacher} we know that the weight $2$ cohomology contributes at least exponential growth of $\dim_\Q H^{2g+k}_c(\M_g)$ for $k$ in the set 
\[
  (3+U_2) \cup (2+U_2),
\]
and the result follows.
\end{proof}

The following picture illustrates the values of $k$ for which $\dim_\Q H^{2g+k}_c(\M_g)$ is now known to grow at least exponentially with $k$, with dark grey boxes for the previously known cases (from weight 0 and 2) and light grey boxes for the new contributions from weight 11. 
\[
\begin{tikzpicture}
  \begin{scope}[yshift=.2cm,scale=.2]
    \foreach \x in {0, 1, 2, 3, 4, 5, 6, 7, 9, 10, 13, 20, 51, 54, 55, 56, 57, 58, 59, 60}
    \node[minimum size=2mm, draw=black, fill=white, inner sep=0] at (\x,0) {};
  \end{scope}
  \begin{scope}[yshift=.2cm, scale=.2]
    \foreach \x in {8, 11, 12, 14, 15, 16, 17, 18, 19, 21, 22, 23, 24, 25, 26, 27, 28, 29, 30, 31, 32, 33, 34, 35, 36, 37, 38, 39, 40, 41, 42, 43, 44, 45, 46, 47, 48, 49, 50, 52, 53}
    \node[minimum size=2mm, draw=black, fill=black!20, inner sep=0] at (\x,0) {};
    \end{scope}
    \begin{scope}[yshift=.2cm, scale=.2]
      \foreach \x in {2, 3, 5, 6, 8, 9, 10, 12, 13}
      \node[minimum size=2mm, draw=black, fill=black!50, inner sep=0] at (\x,0) {};
      \end{scope}
  \begin{scope}[yshift=.2cm, scale=.2]
    \foreach \x in {0,3}
    \node[minimum size=2mm, draw=black, fill=black!50, inner sep=0] at (\x,0) {};
    \end{scope}
\begin{scope}[scale=.2]
\foreach \x in {0,10,20,30,40,50,60}
\draw (\x,1) -- (\x,-.5) (\x,-1) node {\x};
\end{scope}
\end{tikzpicture}
\]

\begin{remark}
Corollary~\ref{cor:exp-growth} is only a rough summary of what one can deduce from Theorem~\ref{thm:emb intro} and previous known nonvanishing results in weights 0 and 2.  One also gets specific bounds on the genera for which $H^{2g+k}_c(\M_g)$ is nonzero, and lower bounds on the dimensions of these groups.\end{remark}

The results above are proved by identifying the weight 11 compactly supported cohomology of $\M_{g,n}$ with the tensor product of $\Delta = H^{11}(\MM_{1,11})$ with the cohomology of a graph complex that we now describe.   The graph complex $B_{g,n}$ is a differential graded vector space generated by genus $g$ graphs with $n$ legs numbered $1,\dots,n$, at least 11 legs labeled $\omega$, and an arbitrary number of legs labeled $\epsilon$.  Each connected component contains at least one $\epsilon$- or $\omega$-labeled leg.
The \emph{genus} of a generating graph is the loop order of the connected graph obtained by gluing together all $\epsilon$- and $\omega$-legs, plus one.
The cohomological degree is: 
\[
 22-\#\omega+\#\text{edges}-n .
\]
For example, the following graph is a degree 22 generator of $B_{9,1}$.
\[
  \begin{tikzpicture}
    \node (v) at (0,0) {$\epsilon$};
    \node (w) at (1,0) {$1$};
    \draw (v) edge (w);
  \end{tikzpicture}
  \begin{tikzpicture}
    \node[int] (i) at (0,.5) {};
    \node (v1) at (-.5,-.2) {$\omega$};
    \node (v2) at (0,-.2) {$\omega$};
    \node (v3) at (.5,-.2) {$\omega$};
  \draw (i) edge (v1) edge (v2) edge (v3);
  \end{tikzpicture}
  \begin{tikzpicture}
    \node[int] (i) at (0,.5) {};
    \node (v1) at (-.5,-.2) {$\omega$};
    \node (v2) at (0,-.2) {$\omega$};
    \node (v3) at (.5,-.2) {$\omega$};
  \draw (i) edge (v1) edge (v2) edge (v3);
  \end{tikzpicture}
  \begin{tikzpicture}
    \node[int] (i) at (0,.5) {};
    \node (v1) at (-.5,-.2) {$\omega$};
    \node (v2) at (0,-.2) {$\omega$};
    \node (v3) at (.5,-.2) {$\omega$};
  \draw (i) edge (v1) edge (v2) edge (v3);
  \end{tikzpicture}
  \begin{tikzpicture}
    \node[int] (i) at (0,.5) {};
    \node (v1) at (-.5,-.2) {$\omega$};
    \node (v2) at (0,-.2) {$\omega$};
    \node (v3) at (.5,-.2) {$\omega$};
  \draw (i) edge (v1) edge (v2) edge (v3);
  \end{tikzpicture}
\]
The differential $\delta$ on $B_{g,n}$ is a sum of three pieces, 
\[
\delta = \delta_\omega  + \delta_s^\bullet + \delta_s^\circ.
\]
The piece $\delta_\omega$ changes one $\omega$- to an $\epsilon$-label, the piece $\delta_s^\bullet$ splits vertices, and the piece $\delta_s^\circ$ joins together a subset of the $\epsilon$-legs with either 0 or 1 of the $\omega$-legs, and attaches a new leg labeled $\epsilon$ or $\omega$, respectively.  See \S\ref{sec:truncation} for details. 

\begin{prop} 
\label{prop:Bgn intro}
The weight 11 compactly supported cohomology of $\M_{g,n}$ is isomorphic to the tensor product of $\Delta$ with the cohomology of $B_{g,n}\colon$
\[
  \gr_{11} H_c^\bullet( \M_{g,n} )\cong H(B_{g,n},\delta) \otimes \Delta.
\]
\end{prop}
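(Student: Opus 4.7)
The plan is to build a spectral sequence converging to $\gr_{11} H^\bullet_c(\M_{g,n})$ whose $E_1$-page is identified with $B_{g,n} \otimes \Delta$, following the strategy of the weight $0$ and weight $2$ constructions of \cite{CGP1,CGP2,PayneWillwacher} but using $H^{11}(\MM_{1,11}) = \Delta$ in place of $\Q$ or $\kappa$ as the building block.

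First, consider the Deligne spectral sequence associated to the boundary stratification of $\MM_{g,n}$. Its $E_1$-page (up to Tate twists) is a sum over stable graphs $\Gamma$ of $\M_{g,n}$ of $H^\bullet$ of the products $\MM_\Gamma = \prod_v \MM_{g_v,n_v}/\mathrm{Aut}(\Gamma)$. Restricting to weight $11$ and using K\"unneth, the vanishing $\gr_w H^\bullet(\MM_{g,n}) = 0$ for $w \in \{1,3,5,7,9\}$ from \cite{BFP}, and $\gr_0 H^\bullet(\MM_{g,n}) = \Q$ concentrated in degree $0$, the weight $11$ contribution from each $\Gamma$ is a sum over vertices $v$ of $\gr_{11} H^\bullet(\MM_{g_v,n_v})$ tensored with fundamental classes of the other vertices.

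Second, apply \cite{CanningLarsonPayne}: each $\gr_{11} H^\bullet(\MM_{g_v,n_v})$ is a direct sum of copies of $\Delta$ generated by boundary pushforwards from sub-strata containing an $\MM_{1,11}$-vertex. Substituting this description refines the weight $11$ part of the $E_1$-page to a sum indexed by pairs of (stable graph $\Gamma$, distinguished degeneration of one vertex carrying a marked $\MM_{1,11}$-subvertex), with each summand a copy of $\Delta$. Flattening the indexing, the set is identified with stable graphs $\Gamma'$ of $\M_{g,n}$ carrying a distinguished $(1,11)$-vertex $v_0$, so the weight $11$ part of $E_1$ takes the form $C_{g,n} \otimes \Delta$ for some combinatorial chain complex $C_{g,n}$.

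Third, identify $C_{g,n}$ with $B_{g,n}$. Contract all trivial $\MM_{0,3}$-vertices of $\Gamma'$; the reduced graph has $n$ numbered legs, vertices of type $(g_v, n_v)$ with $3g_v - 3 + n_v \geq 1$ (excluding $v_0$), $\omega$-labeled legs corresponding to the half-edges incident to $v_0$ (hence at least $11$ by stability of $\MM_{1,11}$), and $\epsilon$-labeled legs recording the contracted $\MM_{0,3}$-vertices. The $B_{g,n}$ genus convention, loop order of (glue all $\omega$ and $\epsilon$ legs) plus one, reproduces the genus of $\Gamma'$: gluing the $\omega$-legs recreates $v_0$ and gluing the $\epsilon$-legs recreates the contracted $\MM_{0,3}$-vertices. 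The three pieces of the differential then correspond to: $\delta_\omega$, smoothing an edge incident to $v_0$ (converting an $\omega$-leg to $\epsilon$); $\delta_s^\bullet$, the standard Deligne vertex-splitting on non-$(1,11)$ vertices; and $\delta_s^\circ$, re-attaching a $\MM_{0,3}$-vertex joining $\epsilon$-legs with possibly one $\omega$-leg.

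The main obstacle is step three: a sign-consistent bijection between the combined weight-$11$ $E_1$-page and $B_{g,n} \otimes \Delta$, matching $\delta = \delta_\omega + \delta_s^\bullet + \delta_s^\circ$ on the nose. Careful treatment of stable-graph automorphisms, orbifold quotient structures on $\MM_\Gamma$, and signs from Tate twists and K\"unneth reorderings is required. Collapse of the spectral sequence at the identified page should follow from the fact that weight $11$ is an associated graded piece of a pure Hodge structure, so no higher differentials can change weights within this sub-complex, giving the desired identification $H(B_{g,n},\delta) \otimes \Delta \cong \gr_{11} H^\bullet_c(\M_{g,n})$.
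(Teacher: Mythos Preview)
Your first two steps are essentially the paper's starting point: the weight~$11$ part of the Getzler--Kapranov complex $\GK_{g,n}^{11}$ is exactly the weight-restricted $E_1$-page you describe, and its cohomology is already $\gr_{11} H_c^\bullet(\M_{g,n})$ (so no separate collapse argument is needed). The reduction to graphs with one special genus-$1$ vertex of valence $\geq 11$, decorated by $H^{11}(\MM_{1,n_v})\cong W_{n_v}\otimes\Delta$, is also correct.

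The gap is in your step three. You attempt to read $B_{g,n}$ directly off the $E_1$-page by interpreting $\omega$-legs as the half-edges of a distinguished $(1,11)$-vertex and $\epsilon$-legs as records of contracted $\M_{0,3}$-vertices. This does not match the actual definition of $B_{g,n}$: there the $\omega$- and $\epsilon$-legs are \emph{both} half-edges at a single special genus-$1$ vertex of variable valence $n_v\geq 11$, and the number of $\omega$-legs is not fixed at $11$ but can be any integer $\geq 11$. The point is that $B_{g,n}$ is not the $E_1$-page; it is an auxiliary \emph{resolution} of it. The $E_1$-page (after factoring out $\Delta$) decorates the special vertex by the irreducible representation $W_{n_v}=V_{(n_v-10)1^{10}}$, whereas $B_{g,n}$ replaces this decoration by the induced representation $\mathrm{Ind}_{\bS_{11}\times\bS_{n_v-11}}^{\bS_{n_v}}\sgn_{11}\otimes\Q$ spanned by the symbols $\omega_A$, together with higher terms that kill the $12$-term relations among the $\omega_A$. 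This is precisely what $\delta_\omega$ does: it has nothing to do with smoothing an edge, but rather implements the relation $\sum_j(-1)^{j+1}\omega_{\{b_1,\dots,\hat b_j,\dots,b_{12}\}}=0$. The paper carries this out by building an acyclic over-complex $X_{g,n}$ (any number of $\omega$-markings), proving it contractible via a homotopy on the $\omega$-markings, and then truncating at $\geq 11$ $\omega$'s to obtain $B_{g,n}$; the quasi-isomorphism to the $E_1$-page is checked via a spectral sequence on the number of vertices.

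A secondary issue: the classes $\omega_A\in H^{11}(\MM_{1,n})$ are \emph{pullbacks} under forgetful maps $\MM_{1,n}\to\MM_{1,11}$, not boundary pushforwards from strata containing an $\MM_{1,11}$-vertex. Your reindexing by ``stable graphs $\Gamma'$ with a distinguished $(1,11)$-vertex'' therefore does not arise, and the flattening you describe does not produce the generators of $B_{g,n}$.
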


\noindent When $E(g,n):=3g+2n-25$ is small, the graph complex $B_{g,n}$ is sufficiently simple that its cohomology, and hence $\gr_{11}H^\bullet_c(\M_{g,n})$, can be computed by hand. We carry this through for $E(g,n) \leq 3$ in \S\ref{sec:low excess}. We show that $\gr_{11}H^\bullet_c(\M_{g,n})$ vanishes when $E(g,n) < 0$.  In particular, $\gr_{11}H^\bullet_c(\M_g) = 0$ for $g \leq 8$.  In the first nontrivial case without marked points, we find that 
\[
\gr_{11}H^{k}_c(\M_9) \cong \begin{cases} \Delta & \text{ for $k = 22$,} \\ 0 & \text{ otherwise.} \end{cases}
\] 
We also find large families of nontrivial graph cohomology classes for $n=0$ that give rise to Theorem~\ref{thm:emb intro}. These families are constructed in \S\S\ref{sec:first inj}-\ref{sec:second inj}.

As another application of Proposition~\ref{prop:Bgn intro}, we give a formula for a generating function for the $\bbS_n$-equivariant Euler characteristic of $\gr_{11}(H_c^\bullet(\M_{g,n}))$, analogous to the formulas in weights 0 and 2 proved in \cite{CFGP} and \cite{PayneWillwacherEuler}, respectively.
This formula, along with numerical results for a range of $g,n$, are presented in \S\ref{sec:euler}.  For $g = 2$ and $3$ and $n \leq 14$, our results agree with data obtained independently by Bergstr\"om and Faber using local systems and the trace of Frobenius \cite{BF}.  For $g = 3$, the computations of Bergstr\"om and Faber are conditional on a conjectural list of motives of weight at most 22 that can appear in moduli spaces of curves, based on the work of Chenevier and Lannes \cite{CL}.  Our results confirm the weight 11 part of these computations unconditionally.

\section{Preliminaries}

\subsection{Graded vector spaces, symmetric products, and differentials}
\label{sec:dg preliminaries}
Let $V=\bigoplus_{n\in \mathbb Z} V^n$ denote a graded vector space over $\Q$, with $V^n$ the subspace of degree $n$. We write $|v|=n$ for the degree of a homogeneous element $v\in V^n$.

We follow the usual Koszul sign convention. In other words, the preferred isomorphism exchanging the factors in the tensor product of graded vector spaces $V$ and $W$
\[
V \otimes W \xrightarrow{\sim} W\otimes V
\]
is given on homogeneous elements $v\in V$ and $w\in W$ by 
\[
  v\otimes w \mapsto (-1)^{|v||w|} w\otimes v.
\]
This convention induces an action of the symmetric group $\bbS_k$ on the tensor power
\[
V^{\otimes k} = \underbrace{V\otimes \cdots \otimes V}_{k}  
\]
of a graded vector space $V$. We define 
\[
\Sym^k V := V^{\otimes k} / \bbS_k
\]
to be the space of coinvariants.  For $v_1,\dots,v_k\in V$ we write $v_1\cdots v_k$ for the equivalence class of $v_1\otimes \cdots \otimes v_k$ in $\Sym^k V$. Note that, by the Koszul sign convention, when $k=2$ and $v_1,v_2\in V$ are homogeneous elements
\begin{align*}
v_1v_2 &= (-1)^{|v_1||v_2|} v_2v_1.
\end{align*}
In particular, if $v$ is homogeneous of odd degree then $v^2 = 0$ in $\Sym^2 V$.

Many of the graded vector spaces that we consider are also equipped with a differential. Throughout, we follow cohomological conventions for these dg vector spaces, i.e., the differentials increase the cohomological degree by 1.

\subsection{The Getzler-Kapranov graph complex}
The cohomology of the Deligne-Mumford compactifications $H^\bullet(\MM_{g,n})$ of the moduli spaces of curves assemble to form a modular cooperad $H(\MM)$. The modular cooperad structure encapsulates the symmetric group actions and the boundary-pullback operations 
\begin{align*}
\xi^* \colon H^\bullet(\MM_{g_1+g_2,n_1+n_2}) &\to H^\bullet(\MM_{g_1,n_1+1})\otimes H^\bullet(\MM_{g_2,n_2+1}) \\
\eta^* \colon H^\bullet(\MM_{g+1,n}) &\to H^\bullet(\MM_{g,n+2}),
\end{align*}
together with the natural compatibility relations among these pullbacks and group actions.  

For any modular cooperad one can define its Feynman transform, following Getzler and Kapranov \cite{GK}, see also \cite[\S2.4]{PayneWillwacher}. We define the Getzler-Kapranov complex $\GK$ to be the Feynman transform of the modular cooperad $H(\MM)$ 
\[
\GK := \cF H(\MM),
\]
and write $\GK_{g,n}$ for the part of genus $g$ and arity $n$, as in \cite[\S2.5]{PayneWillwacher}.
Generators of $\GK_{g,n}$ are dual graphs of stable curves of genus $g$ with $n$ numbered external legs, each of whose vertices $v$ is decorated by a copy of $H^{k_v}(\MM_{g_v, n_v})$, where $g_v$ and $n_v$ are the genus and valence of the vertex $v$, respectively.  The genus $g$ is the loop order of the graph plus the sum of the numbers $g_v$. The cohomological degree of a generator is the number of structural edges (not counting numbered legs) plus the sum of the degrees of decorations $\sum_v k_v$; the differential $\delta$ on $\GK_{g,n}$ is defined using the modular cooperad operations $\xi^*, \eta^*$ and increases the cohomological degree by 1.

There is an additional grading of $\GK_{g,n}$ by weight. The weight of a generator is $\sum_v k_v$, the sum of the degrees of the decorations, and the weight is preserved by the differential.  We write $\GK^k_{g,n}$ for the subcomplex generated by graphs of weight $k$, so $(\GK_{g,n}, \delta)$ splits as a direct sum
\[
\GK_{g,n} \cong \bigoplus_k \GK_{g,n}^k,
\]

The cohomology of the weight $k$ part of $\GK_{g,n}$ is identified with the weight $k$ graded part of the compactly supported cohomology of the open moduli space
\[
  H^\bullet(\GK^k_{g,n}, \delta) \cong \gr_k H^\bullet_c(\M_{g,n}).
\]
In this paper we study the weight 11 part $\GK^{11}_{g,n}$, whose cohomology computes $\gr_{11} H^\bullet_c(\M_{g,n})$.

\subsection{The weight 11 Getzler-Kapranov complex}

The complex $\GK^{11}_{g,n}$ has a relatively simple description because $H^k(\MM_{g,n})$ vanishes for all odd $k \leq 9$, by \cite[Theorem 1.1]{BFP}. It follows that, in each generator for $\GK^{11}_{g,n}$, there is one vertex $v$, which we call the ``special vertex" with $k_v = 11$, and all other vertices are decorated by $H^0$.  Since $H^{0}(\MM_{g_v, n_v})=\Q$ these latter decorations are essentially trivial and can be ignored.  

The possibilities for the decoration at the special vertex are as follows \cite{CanningLarsonPayne}.  Let $W_n:=V_{(n-10)1^{10}}$ be the irreducible $\bS_n$-representation corresponding to the Young diagram $(n-10)1^{10}$, and let $\Delta = H^{11}(\MM_{1,11})$. Then 
  \[
      H^{11}(\MM_{g,n})
    \cong
  \begin{cases}
     W_n \otimes \Delta & \text{for $g = 1$ and $n \geq 11$};\\
0 & \text{otherwise.}
\end{cases}
  \] 
In particular, the special vertex $v$ in each generator for $\GK^{11}_{g,n}$ has genus $g_v = 1$ and  valence $n_v \geq 11$. The following figure depicts a typical generator; the special vertex is indicated by a double circle, and $x \in H^{11}(\MM_{1,n})$ is the decoration at the special vertex.
\[
\begin{tikzpicture}  
  \node[ext,accepting, label=0:{$\scriptstyle x$}] (v) at (0,0){$\scriptstyle 1$};
  \node[ext] (v1) at (-3,1) {$\scriptstyle 0$};
  \node[ext] (v2) at (-2,1) {$\scriptstyle 2$};
  \node[ext] (v3) at (-1,1) {$\scriptstyle 0$};
  \node[ext] (v4) at (0,1) {$\scriptstyle 2$};
  \node[ext] (v5) at (1,1) {$\scriptstyle 0$};
  \node[ext] (v6) at (2,1) {$\scriptstyle 0$};
  \node[ext] (v7) at (3,1) {$\scriptstyle 1$};
  \node (e1) at (-4,1) {$\scriptstyle 1$};
  \node (e2) at (-1.5,-1) {$\scriptstyle 2$};
  \node (e3) at (-.5,-1) {$\scriptstyle 3$};
  \node (e4) at (.5,-1) {$\scriptstyle 4$};
  \node (e5) at (1.5,-1) {$\scriptstyle 5$};
  \draw (v) edge[loop left] (v) edge (v1) edge (v2) edge (v3) edge (v4) edge (v5) edge (v6) edge (v7) edge (e5) edge (e2) edge (e3) edge (e4)
  (v2) edge (v1) edge (v3) (v4) edge (v3) edge (v5) (v6) edge (v5) edge (v7)
  (v7) edge[loop above] (v7) (v1) edge (e1);
\end{tikzpicture}
\in \GK^{11}_{14,5}
\]
The genus $g_v$ of each vertex $v$ is inscribed in the corresponding node. Note that generators can have tadpoles, i.e., edges connecting a vertex to itself.

\section{A combinatorial graph complex for weight 11}

In this section, we give a more precise description of the combinatorial graph complex $B_{g,n}$ discussed in the introduction and prove Proposition~\ref{prop:Bgn intro}. The proof is a zig-zag of quasi-isomorphisms between $\GK_{g,n}^{11}$ and $B_{g,n} \otimes \Delta$.  The first step in our zig-zag is a surjective quasi-isomorphism from $\GK^{11}_{g,n}$ to a quotient complex whose generators do not have tadpoles, except at the special vertex, and whose non-special vertices are all of genus 0.

\begin{definition}
Let $I_{g,n}\subset \GK^{11}_{g,n}$ be the dg subspace spanned by graphs with at least one non-special vertex $v$ that carries a tadpole or a positive genus $g_v\geq 1$. Then we define  
\[
  \bGK^{11}_{g,n} = \GK^{11}_{g,n} / I_{g,n}.
\]
\end{definition}
\noindent In other words, in $\bGK^{11}_{g,n}$ we set to zero all generators with tadpoles or positive genera at non-special vertices. The special vertex nevertheless always has genus 1, and may also have tadpoles.

\begin{prop}
The quotient map 
\[
  \GK^{11}_{g,n} \to \bGK^{11}_{g,n}
\]
is a quasi-isomorphism of dg vector spaces.
\end{prop}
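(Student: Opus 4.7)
The plan is to show the kernel $I_{g,n}$ is acyclic, whereupon the quotient map is automatically a quasi-isomorphism. The argument adapts the analogous weight-$0$ reductions of \cite{CGP1, CGP2, PayneWillwacher}, which establish acyclicity of subcomplexes of graph complexes generated by graphs with tadpoles or positive-genus vertices.

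First I would analyze which components of the Feynman transform differential $\delta$ act nontrivially on $\GK^{11}_{g,n}$. Since $H^{11}(\MM_{g',n'})$ vanishes unless $g'=1$ (by \cite{CanningLarsonPayne}) and since $H^{\mathrm{odd}}(\MM_{0,n'})=0$ (the rational cohomology of $\MM_{0,n'}$ is concentrated in even degrees), the component of $\delta$ that adds a tadpole at the special vertex vanishes, and the component that splits the special vertex is forced to produce exactly one new special vertex (genus $1$ with $H^{11}$-decoration) and one new non-special vertex (genus $0$ with $H^0$-decoration). Thus on $\GK^{11}_{g,n}$ the differential decomposes as
\[
\delta = \delta_\xi^{\mathrm{spec}} + \delta_\xi^{\mathrm{ns}} + \delta_\eta^{\mathrm{ns}},
\]
with the latter two pieces acting purely on non-special vertices via $\xi^*$ and $\eta^*$ on the trivial $H^0$-decorations.

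Next I would introduce a filtration of $I_{g,n}$ arranged so that on the associated graded the special vertex and its incident edges become rigid, leaving only the weight-$0$ Feynman-transform-type differential acting on the non-special subgraph. The associated graded then splits as a direct sum, indexed by the configuration of the special vertex, of subcomplexes of the weight-$0$ Getzler--Kapranov complex generated by graphs containing at least one non-special tadpole or positive-genus vertex. Acyclicity of each such weight-$0$ subcomplex is the standard CGP-type reduction underlying the passage from the full Getzler--Kapranov complex to the Kontsevich-like complex with only genus-$0$, tadpole-free vertices; it is proven by an explicit chain homotopy built from a desymmetrized inverse to $\delta_\eta$ that removes a non-special tadpole and increments the incident vertex's genus by one. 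The spectral sequence of the filtration then collapses, yielding $H(I_{g,n})=0$.

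The main obstacle is selecting the filtration so that its associated graded genuinely factorizes as a product of rigid special-vertex data with a weight-$0$ non-special piece, and carrying out the homotopy argument with the correct signs and symmetry factors when multiple tadpoles or positive-genus vertices are present. These technicalities parallel the weight-$0$ treatment in \cite{CGP1} and should go through without surprise.
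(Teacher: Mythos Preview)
Your approach is correct and morally the same as the paper's, but the paper executes it with a simpler filtration. You propose to filter so that the special vertex becomes rigid while retaining the full weight-$0$ differential $\delta_\xi^{\mathrm{ns}}+\delta_\eta^{\mathrm{ns}}$ on the non-special part, and then invoke the CGP acyclicity result wholesale. The paper instead filters both $\GK^{11}_{g,n}$ and $\bGK^{11}_{g,n}$ by the total number of vertices. Since every $\xi^*$-piece (special or not) increases the vertex count, on the associated graded only $\delta_\eta^{\mathrm{ns}}$ survives; its cohomology is visibly the span of graphs whose non-special vertices have genus $0$ and no tadpoles, which is exactly $\bGK^{11}_{g,n}$. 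This avoids having to specify a filtration that freezes the special vertex while keeping $\delta_\xi^{\mathrm{ns}}$ alive (you would need something like the valence of the special vertex for that), and it bypasses the bookkeeping of how the half-edges incident to the special vertex interact with the non-special weight-$0$ complex. Both routes rest on the same chain homotopy (remove a tadpole, increment the genus), so the substantive content is identical; the paper's version is just shorter.
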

\begin{proof}
We endow both sides with the descending filtration on the number of vertices.
The differential on the associated graded of $\bGK^{11}_{g,n}$ is zero, while that on $\GK^{11}_{g,n}$ is given by the part that reduces the genus of a non-special vertex and adds a tadpole:
\[
\begin{tikzpicture}
  \node[ext] (v) at (0,0) {$\scriptstyle g_v$};
  \draw (v) edge +(-.5,-.5)   edge +(0,-.5)  edge +(.5,-.5);  
\end{tikzpicture}
\mapsto 
\begin{tikzpicture}
  \node[ext] (v) at (0,0) {$\scriptstyle g_v-1$};
  \draw (v) edge +(-.5,-.5)   edge +(0,-.5)  edge +(.5,-.5) edge [loop above] (v);  
\end{tikzpicture}
\]
The cohomology of this differential is given by graphs in which every non-special vertex has genus 0 and no tadpoles. The proof is similar to (and simpler than) the arguments in \cite[\S4]{PayneWillwacher}.
\end{proof}

In pictures of generators for $\bGK_{g,n}^{11}$, we omit the genus of the vertices. The special vertex is indicated by a double circle and has genus 1. All other vertices are of genus 0. For instance, the following depicts a generator for $\bGK_{8,6}^{11}$.

\[
\begin{tikzpicture}  
  \node[ext,accepting, label=0:{$\scriptstyle x$}] (v) at (0,0){};
  \node[int] (v1) at (-3,1) {};
  \node[int] (v2) at (-2,1) {};
  \node[int] (v3) at (-1,1) {};
  \node[int] (v4) at (0,1) {};
  \node[int] (v5) at (1,1) {};
  \node[int] (v6) at (2,1) {};
  \node[int] (v7) at (3,1) {};
  \node (e1) at (-4,1) {$\scriptstyle 1$};
  \node (e2) at (-1.5,-1) {$\scriptstyle 2$};
  \node (e3) at (-.5,-1) {$\scriptstyle 3$};
  \node (e4) at (.5,-1) {$\scriptstyle 4$};
  \node (e5) at (1.5,-1) {$\scriptstyle 5$};
  \node (e6) at (4,1) {$\scriptstyle 6$};
  \draw (v) edge[loop left] (v) edge (v1) edge (v2) edge (v3) edge (v4) edge (v5) edge (v6) edge (v7) edge (e5) edge (e2) edge (e3) edge (e4)
  (v2) edge (v1) edge (v3) (v4) edge (v3) edge (v5) (v6) edge (v5) edge (v7)
  (v7) edge (e6) (v1) edge (e1);
\end{tikzpicture}
\]

To summarize, each generator of $\bGK^{11}_{g,n}$ has the following form:

\begin{itemize}
\item A connected graph $\Gamma$ of loop order $g-1$ with one special vertex $v$ and $n$ numbered legs, in which all non-special vertices have valence at least 3.
\item The special vertex has valence $n_v \geq 11$ and is decorated by an element $x\in H^{11}(\MM_{1,n_v})$.  (The markings in $\MM_{1,n_v}$ are implicitly identified with the half-edges at the special vertex.)
\item There are no tadpoles at non-special vertices. 
\item The graph is equipped with an orientation $o$ given by an ordering on the set of structural edges (i.e., all edges other than the numbered legs). \end{itemize}
We suggestively denote the orientation by $o=e_1\wedge \cdots \wedge e_k$ with $e_1,\dots,e_k$ the structural edges of $\Gamma$.
We impose two relations on these generators. 

\begin{itemize}
\item First, we identify isomorphic graphs. That is, if $\phi\colon \Gamma\to \Gamma'$ is an isomorphism, we set 
\begin{equation}\label{equ:bGK gen rel 1}
(\Gamma, e_1\wedge\cdots \wedge e_k, x) 
=
(\Gamma',\phi(e_1)\wedge\cdots \wedge\phi(e_k), \phi(x)),
\end{equation}
with $\phi(x)$ relabeling the punctures in $\MM_{1,n_v}$ according to the isomorphism $\phi$.
\item Second, we identify orderings up to sign. That is, for 
a permutation $\sigma\in \bbS_k$ we set 
\begin{equation}\label{equ:bGK gen rel 2}
    (\Gamma, e_1\wedge\cdots \wedge e_k,x )
    =
    \sgn(\sigma)
    (\Gamma, e_{\sigma(1)}\wedge\cdots \wedge e_{\sigma(k)},x)\,.
\end{equation}
\end{itemize}
The differential acts by splitting vertices. The vertex split of the special vertex uses the map 
\[
\xi^* \colon H^{11}(\MM_{1,r})\to H^{11}(\MM_{1,r-s+1})\otimes H^0(\MM_{0,s+1}).
\]

Recall that the special vertex is decorated by $W_{n_v} \otimes \Delta$, and $\Delta = H^{11}(\MM_{1,11})$ is a $\Q$-vector space of rank 2.  This vector space does not have a canonical basis, but it does have a canonical Hodge structure of weight 11. Its complexification $\Delta \otimes \CC$ splits canonically as
\[
\Delta \otimes \CC \cong \Delta^{11,0} \oplus \Delta^{0,11},
\]
where $\Delta^{0,11}$ is the complex conjugate of $\Delta^{11,0}$, and $\Delta^{11,0}$ is spanned by a canonical element $\omega$ corresponding to the weight 12 cusp form for $\mathrm{SL}_2(\ZZ)$.  The pullback maps $\xi$ and $\eta$ respect complex conjugation. 
We can then decompose $\bGK_{g,n}^{11}$ as a tensor product
\[
\bGK_{g,n}^{11} \cong \bGK^{11, \circ}_{g,n} \otimes \Delta
\]
where $\bGK^{11,\circ}_{g,n}$ is a simpler and more combinatorial complex in which the special vertex is decorated by $W_{n_v}$. To make the differential on $\bGK^{11,\circ}_{g,n}$ explicit, we recall the description of generators, relations, and boundary pullback maps for $H^{11}(\MM_{g,n})$, from \cite[\S2]{CanningLarsonPayne}.

The symmetric group $\bS_{11}$ acts by the sign representation on $H^{11,0}(\MM_{1,11})$.  For $n > 11$, $H^{11,0}(\MM_{1,n})$ is generated by the pullbacks 
\[
\omega_A := \iota_A^* \omega    
\]
of the canonical generator $\omega$ of $H^{11,0}(\MM_{1,11})$ under the forgetful maps 
\[
    \iota_A\colon  \MM_{1,n}\to \MM_{1,11},
\]
given by forgetting all punctures except those in the set $A\subset \{1,\dots,n\}$ of cardinality 11.  Moreover, the pullbacks $\{\omega_A : 1 \in A\}$ form a basis.

Let $\xi_C\colon\MM_{1,B\cup \{p\}}\times \MM_{0,C\cup\{q\}}\to \MM_{1,B\cup C}$ be the boundary inclusion and let
\[
    \xi_C^*\colon  H^{11}(\MM_{1,B\cup C})
    \to 
    H^{11}(\MM_{1,B\cup \{p\}})\otimes H^{0}(\MM_{0,C\cup\{q\}})
    \cong H^{11}(\MM_{1,B\cup \{p\}})
\]
be the corresponding pullback operation.
Then
\begin{equation}
\label{equ:xistar}
\xi_C^* \omega_A = 
\begin{cases}
    \omega_A & \text{if $C\cap A=\emptyset $} \\
    \omega_{(A\setminus c)\cup p} & \text{if $C\cap A=\{c\}$}. \\
    0 & \text{otherwise.}
\end{cases}
\end{equation}

The differential on $\bGK_{g,n}^{11,\circ}$ hence has the form 
\[
\delta = \delta_s^\bullet + \delta_s^\circ,  
\]
with $\delta_s^\bullet$ splitting non-special vertices and $\delta_s^\circ$ splitting the special vertex. Concretely, we have 
\begin{equation}\label{equ:delta bullet def bGK}
  \delta_s^\bullet (\Gamma, e_1\wedge\cdots  \wedge e_k,x)
  =
  \sum_{v\in V_\bullet\Gamma} \sum_{\text{split v}} (\Gamma', e_0\wedge e_1\wedge\cdots  \wedge e_k ,x)    
  \end{equation}
with the outer sum running over non-special vertices of $\Gamma$.
The inner sum is over all admissible ways of replacing the vertex $v$ by two vertices connected by a new edge $e_0$, distributing the incident half-edges at $v$ on the new vertices, thus forming a graph $\Gamma'$. Pictorially:
\begin{align*}
    \delta_s^\bullet:
    \begin{tikzpicture}[baseline=-.65ex]
        \node[int] (v) at (0,0) {};
        \draw (v) edge +(-.3,-.3)  edge +(-.3,0) edge +(-.3,.3) edge +(.3,-.3)  edge +(.3,0) edge +(.3,.3);
        \end{tikzpicture}
        &\mapsto\sum
        \begin{tikzpicture}[baseline=-.65ex]
        \node[int] (v) at (0,0) {};
        \node[int] (w) at (0.5,0) {};
        \draw (v) edge (w) (v) edge +(-.3,-.3)  edge +(-.3,0) edge +(-.3,.3)
         (w) edge +(.3,-.3)  edge +(.3,0) edge +(.3,.3);
        \end{tikzpicture}        
\end{align*}
Similarly, the operation $\delta_s^\circ$ splits the special vertex,
\begin{equation}\label{equ:delta circ def bGK}
\delta_s^\circ (\Gamma, e_1\wedge\cdots  \wedge e_k, x)
=
\sum_{B\subset H_*, |B|\geq 2} (\text{split}_B\Gamma, e_0\wedge e_1\wedge\cdots  \wedge e_k, \xi_B^* x),    
\end{equation}
where the sum is running over subsets $B$ of the set of half-edges at the special vertex, and $\text{split}_B\Gamma$ is the graph obtained by adding an additional non-special vertex to the graph, to which we connect the half-edges in $B$, and a new edge to the special vertex.
Pictorially:
\begin{align*}
    \delta_s^\circ:
\begin{tikzpicture}
        \node[ext,accepting, label=90:{$\scriptstyle x$}] (v) at (0,0) {};
        \draw (v) edge +(-.3,-.3)  edge +(-.3,0) edge +(-.3,.3) edge +(.3,-.3)  edge +(.3,0) edge +(.3,.3);
        \end{tikzpicture}
        &\mapsto\sum_B
        \begin{tikzpicture}[baseline=-.65ex]
        \node[ext,accepting, label=90:{$\scriptstyle \xi_B^*x$}] (v) at (0,0) {};
        \node[int] (w) at (0.5,0) {};
        \draw (v) edge (w) (v) edge +(-.3,-.3)  edge +(-.3,0) edge +(-.3,.3) 
         (w)   edge +(.3,0) edge +(.3,.3) edge +(.3,-.3);
         \draw [decorate,decoration = {calligraphic brace}, thick] (.9,.35) --  (.9,-.35);
         \node at (1.2,0) {$\scriptstyle B$};
        \end{tikzpicture}
\end{align*}
To compute the pullback for the decoration at the special vertex one uses \eqref{equ:xistar}. In the definitions of both $\delta_s^\bullet$ and $\delta_s^\circ$, the newly added edge $e_0$ comes first in the edge ordering, and the relative order of the other edges is preserved.

\begin{remark}\label{rem:symdecomp}
Note that the formal linear combinations of expressions $\omega_A$ for $A\subset \{1,\dots,n\}$ with $|A|=11$ form a representation of the symmetric group $\bbS_n$ of the form 
\[
\mathrm{Ind}_{\bbS_{11}\times \bbS_{n-11}}^{\bbS_n} \sgn_{11} \otimes \Q ,
\]
i.e., the induced representation from the product of the sign representation $\sgn_{11}\cong V_{1^{11}}$ of $\bbS_{11}$ and the trivial representation $\Q\cong V_{n-11}$ of $\bbS_{n-11}$.
By Pieri's rule (or the more general Littlewood-Richardson rule) this representation decomposes into irreducibles as 
\[
    V_{(n-10)1^{10}}\oplus V_{(n-11)1^{11}}.
\]

The image of the subspace $V_{(n-11)1^{11}}$ in $H^{11,0}(\MM_{1,n})$ is zero, so  $H^{11,0}(\MM_{1,n})\cong V_{(n-10)1^{10}}$.
A complete set of relations spanning $V_{(n-11)1^{11}}$ is
\begin{equation} \label{eq:12rels}
 \sum_{j=1}^{12} (-1)^{j+1} \omega_{\{b_1,\dots,\hat b_j,\dots,b_{12}\}},
\end{equation}
with $B=\{b_1,\dots,b_{12}\}\subset \{1,\dots,n\}$ running over subsets of cardinality $12$.
\end{remark}

\subsection{An acyclic auxiliary graph complex}
We now describe an auxiliary graph complex $X_{g,n}$ in which each generator has a special vertex decorated by an arbitrary subset of its incident half-edges (not necessarily of size 11).  We include an ordering of these half-edges as part of the orientation, so permuting these half-edges induces a sign representation, consistent with the antisymmetric properties of the generators $\omega_A$ discussed above.  The resulting graph complex is acyclic, and hence gives rise to two resolutions of $\bGK_{g,n}^{11,\circ}$, by truncating according to the number of marked half-edges at the special vertex.  

The generators of $X_{g,n}$ are of the following form:
\begin{itemize}
    \item A connected graph $\Gamma$ of loop order $g - 1$, with one distinguished special vertex, a distinguished subset of $r$ half-edges at the special vertex, and $n$ numbered legs. 
    \item All non-special vertices have valence at least 3, the special vertex has valence at least one, and there are no tadpoles at the non-special vertices.
    \item  The cohomological degree of a generator is $\# \text{structural edges} - \# \text{marked half-edges}$.  
    \item The graph is equipped with an orientation $o$ consisting of a total ordering of the set $\{e_1,\dots,e_k, h_1,\dots,h_r\}$ of structural edges $e_1,\dots,e_k$ of $\Gamma$ and the distinguished subset of half-edges incident to the special vertex $h_1,\dots,h_r$. 
\end{itemize}    
Here, again, the structural edges are all edges other than the numbered legs.     
    We suggestively write $o=a_1\wedge\cdots \wedge a_{k+r}$ to indicate the order on the set of edges and the distinguished half-edges, with $a_1,\dots , a_{k+r}$ being some ordering of the elements of the set $\{e_1,\dots,e_k, h_1,\dots,h_r\}$. 
   We impose two relations. 
   
   \begin{itemize}
      \item First, we identify isomorphic graphs: if $\phi\colon \Gamma\to \Gamma'$ is an isomorphism, we set 
   \begin{equation}\label{equ:X gen rel 2}
    (\Gamma, e_1\wedge \cdots \wedge e_k \wedge h_1\wedge \cdots \wedge h_r) 
    =
    (\Gamma',\phi(e_1)\wedge \cdots \wedge \phi(e_k) \wedge \phi(h_1)\wedge \cdots \wedge \phi(h_r)).
   \end{equation}
    \item Second, we identify orderings up to sign: for a
    permutation $\sigma\in \bbS_{k+r}$ we set 
    \begin{equation}\label{equ:X gen rel 1}
        (\Gamma, a_1\wedge\cdots \wedge a_{k+r})
        =
        \sgn(\sigma) 
        (\Gamma, a_{\sigma(1)}\wedge\cdots \wedge a_{\sigma(k+r)} )\,.
    \end{equation}
        \end{itemize}
    This second relation allows one to put the edges before the half-edges, $e_1\wedge\cdots  \wedge e_k \wedge h_1\wedge \cdots \wedge h_r$. 

The following figure depicts a generator for $X_{5,1}$. The special vertex is indicated by a double ring, and
the marked half-edges at the special vertex are indicated by arrows:
\[
  \begin{tikzpicture}[scale=1]
    \node[ext,accepting] (v1) at (0,0){};
    \node[int] (v2) at (180:1){};
    \node[int] (v3) at (60:1){};
    \node[int] (v4) at (-60:1){};
    \draw (v1) 
    edge[->-](v2)  edge[->-] (v3) edge (v4)  edge[loop right] (v2)
    (v2) edge[bend left] (v3) edge[bend right] (v4)  -- +(180:1.3) 
    (v3) edge (v4);
    \node (w) at (180:2.5) {$1$};
    \end{tikzpicture} \quad .
\]

The differential on $X_{g,n}$ is the sum of two pieces 
\[
\delta=\delta_s +\delta_\omega.    
\]
The piece $\delta_\omega$ simply removes one distinguished half-edge from the orientation.
\[
    \delta_\omega (\Gamma, e_1\wedge\cdots \wedge e_k \wedge h_1\wedge \cdots \wedge h_r)
    =\sum_{j=1}^r (-1)^{k+j-1}
    (\Gamma, e_1\wedge \cdots \wedge e_k \wedge h_1 \wedge \cdots \hat h_j \cdots \wedge h_r)
\]
Pictorially:
\begin{align*}
    \delta_\omega:
    \begin{tikzpicture}
        \node[ext,accepting] (v) at (0,0){};
        \draw (v) edge[->-] +(0:.6) edge[->-] +(60:.6) edge +(120:.6) edge +(180:.6) edge +(240:.6) edge[->-] +(300:.6);
    \end{tikzpicture}
    &\mapsto
    \sum\pm
    \begin{tikzpicture}
        \node[ext,accepting] (v) at (0,0){};
        \draw (v) edge[->-] +(0:.6) edge +(60:.6) edge +(120:.6) edge +(180:.6) edge +(240:.6) edge[->-] +(300:.6);
    \end{tikzpicture} \quad .
\end{align*}
The piece $\delta_s$ acts by splitting vertices.
For convenience we shall further decompose $\delta_s=\delta_s^\bullet+\delta_s^\circ$ into a piece $\delta_s^\circ$ splitting the special vertex and $\delta_s^\bullet$ splitting the other vertices. Concretely, the operation $\delta_s^\bullet$ is defined analogously to \eqref{equ:delta bullet def bGK},
\begin{equation}\label{equ:delta bullet def}
\delta_s^\bullet (\Gamma, e_1\wedge\cdots \wedge e_k \wedge h_1\wedge \cdots \wedge h_r)
=
\sum_{v\in V_\bullet\Gamma} \sum_{\text{split v}} (\Gamma', e_0\wedge e_1\wedge\cdots \wedge e_k \wedge h_1\wedge \cdots\wedge h_r)    \, .
\end{equation}
The outer sum is again over all non-special vertices $v$ of $\Gamma$. The inner sum is over all admissible ways of replacing the vertex $v$ by two vertices connected by an edge, distributing the incident half-edges at $v$ on the new vertices, thus forming a graph $\Gamma'$. Pictorially:
\begin{align*}
    \delta_s^\bullet:
    \begin{tikzpicture}[baseline=-.65ex]
        \node[int] (v) at (0,0) {};
        \draw (v) edge +(-.3,-.3)  edge +(-.3,0) edge +(-.3,.3) edge +(.3,-.3)  edge +(.3,0) edge +(.3,.3);
        \end{tikzpicture}
        &\mapsto\sum
        \begin{tikzpicture}[baseline=-.65ex]
        \node[int] (v) at (0,0) {};
        \node[int] (w) at (0.5,0) {};
        \draw (v) edge (w) (v) edge +(-.3,-.3)  edge +(-.3,0) edge +(-.3,.3)
         (w) edge +(.3,-.3)  edge +(.3,0) edge +(.3,.3);
        \end{tikzpicture}        \quad .
\end{align*}
Similarly, we define $\delta_s^\circ$ analogously to \eqref{equ:delta circ def bGK},
\begin{equation}\label{equ:delta circ def}
\delta_s^\circ (\Gamma, e_1\wedge\cdots \wedge e_k \wedge h_1\wedge \cdots \wedge h_r)
=
\sum_{B\subset H_* \atop |B|\geq 2} (\text{split}_B\Gamma, e_0\wedge e_1\wedge\cdots \wedge e_k \wedge h_1\wedge \cdots \wedge h_r)   \, , 
\end{equation}
with the sum running over subsets $B$ of the set $H_*$ of half-edges incident at the special vertex, such that $|B|\geq 2$ and $B$ contains at most one of the distinguished half-edges.
The graph $\text{split}_B\Gamma$ is built by adding a new non-special vertex $v$ to the graph $\Gamma$, with an edge to the special vertex, and reconnecting the half-edges $B$ to $v$.
If $B$ contains a marked half-edge, then the marking is removed and put on the half-edge connecting the special vertex to $v$ instead. 
Pictorially:
\begin{align*}
    \delta_s^\circ:
\begin{tikzpicture}
        \node[ext,accepting] (v) at (0,0) {};
        \draw (v) edge[->-] +(-.3,-.3)  edge[->-] +(-.3,0) edge +(-.3,.3) edge[->-] +(.3,-.3)  edge +(.3,0) edge +(.3,.3);
        \end{tikzpicture}
        &\mapsto\sum
        \begin{tikzpicture}[baseline=-.65ex]
        \node[ext,accepting] (v) at (0,0) {};
        \node[int] (w) at (0.5,0) {};
        \draw (v) edge (w) (v) edge[->-] +(-.3,-.3)  edge[->-] +(-.3,0) edge +(-.3,.3) edge[->-] +(.3,-.3)
         (w)   edge +(.3,0) edge +(.3,.3);
        \end{tikzpicture}
        +\sum 
        \begin{tikzpicture}[baseline=-.65ex]
        \node[ext,accepting] (v) at (0,0) {};
        \node[int] (w) at (0.5,0) {};
        \draw (v) edge[->-] (w) (v) edge[->-] +(-.3,-.3)  edge[->-] +(-.3,0) edge +(-.3,.3)
            (w) edge +(.3,-.3)  edge +(.3,0) edge +(.3,.3);
        \end{tikzpicture} \quad .
\end{align*}

\begin{lemma}\label{lem:X well defined}
The differential $\delta$ satisfies $\delta^2=0$, and the dg vector space $(X_{g,n}, \delta)$ is acyclic.
\end{lemma}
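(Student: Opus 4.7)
The plan is to verify $\delta^2 = 0$ by expanding into six summands and checking each, then to prove acyclicity via a filtration that reduces the claim to the acyclicity of a Koszul-type complex on an exterior algebra.

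For $\delta^2 = 0$, I would write
\[
\delta^2 = (\delta_s^\bullet)^2 + (\delta_s^\circ)^2 + \{\delta_s^\bullet, \delta_s^\circ\} + \delta_\omega^2 + \{\delta_\omega, \delta_s^\bullet\} + \{\delta_\omega, \delta_s^\circ\},
\]
and verify each summand vanishes separately. The three pieces involving only vertex splittings, namely $(\delta_s^\bullet)^2$, $(\delta_s^\circ)^2$, and $\{\delta_s^\bullet, \delta_s^\circ\}$, vanish by the standard cancellations of successive vertex splits already used in the construction of the Getzler-Kapranov complex $\GK$. The identity $\delta_\omega^2 = 0$ is a direct sign check, as for the differential on an exterior algebra. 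The anticommutator $\{\delta_\omega, \delta_s^\bullet\} = 0$ is immediate, since $\delta_\omega$ modifies only the markings at the special vertex while $\delta_s^\bullet$ modifies only non-special vertices. The genuinely new verification is $\{\delta_\omega, \delta_s^\circ\} = 0$: for each generator and each marked half-edge $h$ at the special vertex, the contribution ``first split off a subset $B \ni h$, transferring the marking to the new connecting edge, then remove that marking'' cancels with ``first remove the marking at $h$, then split off the same, now-unmarked $B$'', after careful comparison of the Koszul signs.

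For acyclicity, introduce the descending filtration $F^p X_{g,n} \subseteq X_{g,n}$ spanned by generators with at least $p$ structural edges. Since $\delta_s$ increases the edge count by one while $\delta_\omega$ preserves it, the induced differential on the associated graded $\gr X_{g,n}$ is just $\delta_\omega$. Using the Euler characteristic identity $E = V + g - 2$ together with the minimum valence condition at non-special vertices, the number of structural edges is bounded uniformly for fixed $g$ and $n$, so the filtration has finitely many nonzero levels; acyclicity of $\gr X_{g,n}$ thus implies acyclicity of $(X_{g,n}, \delta)$.

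The associated graded decomposes as a direct sum over isomorphism classes of underlying marked-vertex graphs $[\Gamma]$, and the summand for $[\Gamma]$ is the coinvariants
\[
\bigl(\det \Q^{E(\Gamma)} \otimes \textstyle\bigwedge^\bullet \Q^{H_*(\Gamma)}\bigr)_{\Aut(\Gamma)},
\]
where $E(\Gamma)$ is the set of structural edges, $H_*(\Gamma)$ is the set of half-edges at the special vertex, and $\Aut(\Gamma)$ acts diagonally. The differential $\delta_\omega$ acts only on the exterior factor, as the Koszul contraction $\iota_w$ with $w = \sum_{h \in H_*(\Gamma)} h^* \in (\Q^{H_*(\Gamma)})^*$. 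Since the special vertex has valence at least one, $H_*(\Gamma) \neq \emptyset$ and $w \neq 0$, so this Koszul complex is acyclic with explicit $\Aut(\Gamma)$-equivariant contracting homotopy $s = |H_*(\Gamma)|^{-1}(v \wedge -)$ for $v = \sum_{h \in H_*(\Gamma)} h$. Because $\Aut(\Gamma)$ is finite and we work in characteristic zero, averaging shows this homotopy descends to the coinvariants. I expect the main obstacle to be the careful sign accounting in $\{\delta_\omega, \delta_s^\circ\} = 0$, since the marking transfer in $\delta_s^\circ$ interacts nontrivially with the Koszul signs of $\delta_\omega$.
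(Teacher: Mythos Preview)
Your approach is essentially the paper's: expand $\delta^2$ into six pieces, then filter so that only $\delta_\omega$ survives on the associated graded and kill it with the Koszul homotopy $s = |H_*|^{-1}(v\wedge-)$. Filtering by structural edges is equivalent to the paper's filtration by vertices, since $E = V + g - 2$ for a connected graph of loop order $g-1$; your $\Aut(\Gamma)$-equivariance argument is exactly the paper's observation that the homotopy is intrinsic. Two points need tightening, however.

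First, the three splitting pieces do \emph{not} vanish separately: $(\delta_s^\circ)^2$ produces chain terms (special~--~$w$~--~$v$, arising when the second split pulls the newly created edge into $B$) that have no partner inside $(\delta_s^\circ)^2$; they are cancelled only by matching terms in $\delta_s^\bullet\delta_s^\circ$. The paper accordingly proves $(\delta_s^\bullet)^2=0$ on its own but then $(\delta_s^\circ)^2 + \{\delta_s^\bullet,\delta_s^\circ\}=0$ jointly. Your appeal to the Getzler--Kapranov machinery gives precisely this joint vanishing, not the individual ones.

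Second, your sketch of $\{\delta_\omega,\delta_s^\circ\}=0$ handles only the case where $B$ contains exactly one marked half-edge. You also need the case where $B$ contains exactly two marked half-edges $h,h'$: then $\mathrm{split}_B$ is zero (since $\delta_s^\circ$ forbids $|B\cap\{\text{marked}\}|\geq 2$), so $\delta_\omega\delta_s^\circ$ contributes nothing, while $\delta_s^\circ\delta_\omega$ produces two terms (from removing $h$ or $h'$ first, then splitting) which must be shown to cancel each other. The cases $h\notin B$ and $|B\cap\{\text{marked}\}|\geq 3$ are easy but should be mentioned.
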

\begin{proof}
The verification that $\delta^2 = 0$ is by direct computation, as follows. 
Expand $\delta^2$ as 
\[
  (\delta_s^\bullet+\delta_s^\circ +\delta_\omega)^2
  =
  (\delta_s^\bullet)^2 + (\delta_s^\circ)^2
  + (\delta_\omega)^2
  + [\delta_s^\bullet, \delta_s^\circ]
  + [\delta_s^\bullet, \delta_\omega]
  + [\delta_s^\circ, \delta_\omega],
\]
with $[-,-]$ denoting the anticommutator.

It is clear that $(\delta_\omega)^2=0$ since the operations of removing two different markings commute, and the terms come with opposite sign.
Similarly, $[\delta_s^\bullet, \delta_\omega]=0$ since the markings at the special vertex do not interfere with the edge splitting operation, and matching terms again come with opposite sign.

To check that $(\delta_s^\bullet)^2=0$ consider a graph $\Gamma\in X$ and compute $\delta_s^\bullet\delta_s^\bullet \Gamma$. 
Say the first application of $\delta_s^\bullet$ splits a vertex $v$ into vertices $v',v''$, and the second application split a vertex $w$. Clearly, if $w\neq v',v''$ then the splittings of first $v$ then $w$ cancels the similar term corresponding to first splitting $w$ and then $v$.
There remain the terms for which $w=v'$ and $w=v''$, schematically depicted as follows:
\[
  \begin{tikzpicture}
  \node[int,label=90:{$v$}] (v) at (0,0) {};
  \draw (v) edge +(-.5,.5) edge +(-.5,0) edge +(-.5,-.5) edge +(.5,.5) edge +(.5,0) edge +(.5,-.5);
  \end{tikzpicture}
  \, \xrightarrow{\delta_s^\bullet}\, 
  \sum 
  \begin{tikzpicture}
    \node[int,label=90:{$v'$}] (v) at (-.5,0) {};
    \node[int,label=90:{$v''$}] (vv) at (.5,0) {};
    \draw (v) edge +(-.5,.5) edge +(-.5,0) edge +(-.5,-.5) edge node[below] {$\scriptstyle 1$} (vv)
    (vv) edge +(.5,.5) edge +(.5,0) edge +(.5,-.5);
    \end{tikzpicture}
    \,  \xrightarrow{\delta_s^\bullet}\, 
  \sum 
    \begin{tikzpicture}
      \node[int] (v) at (-.5,0) {};
      \node[int] (vv) at (.5,0) {};
      \node[int] (x) at (-.1,0) {};
      \draw (v)  edge +(-.5,0) edge +(-.5,-.5) edge node[below] {$\scriptstyle 1$} (x)
      (x) edge +(-.5,.5)
      (vv) edge node[below] {$\scriptstyle 2$} (x) edge +(.5,.5) edge +(.5,0) edge +(.5,-.5);
    \end{tikzpicture}
    +
    \begin{tikzpicture}
      \node[int] (v) at (-.5,0) {};
      \node[int] (vv) at (.5,0) {};
      \node[int] (x) at (.1,0) {};
      \draw (v)  edge +(-.5,0) edge +(-.5,-.5) edge node[below] {$\scriptstyle 2$} (x) edge +(-.5,.5)
      (x) edge +(.5,.5)
      (vv) edge node[below] {$\scriptstyle 1$} (x) edge +(.5,0) edge +(.5,-.5);
    \end{tikzpicture} \quad .
\] 
The numbers below the edges indicate the position of the edge in the ordering that makes up the orientation of the graph. The two terms on the right (from splitting $v'$ and $v''$) are the same, up to the sign from swapping the edge order, and hence cancel.

By essentially the same argument, with one of the vertices replaced by the special vertex,
\[
  (\delta_s^\circ)^2+[\delta_s^\bullet, \delta_s^\circ]=0.
\]
It remains to check that $[\delta_s^\circ, \delta_\omega]=0$.
To this end fix a graph $\Gamma\in X$ and look at those terms in $[\delta_s^\circ, \delta_\omega]X$ in which a subset $B$ of the half-edges at the special vertex is split off, and the marking of half-edge $h$ is removed. Such terms can potentially be produced twice, corresponding to the two paths from top left to bottom right in the following diagram:
\[
  \begin{tikzcd}
  \left.
  \begin{tikzpicture}
    \node[ext, accepting] (v) at (0,0) {};
    \draw (v) edge +(-.5,.5) edge[->-] +(-.5,0) edge +(-.5,-.5) edge +(.5,.5) edge +(.5,0) edge +(.5,-.5);
    \node at (-.7,0) {$\scriptstyle h$};
    \end{tikzpicture}
    \right\} B
    \ar{r}{\text{split}_B}
    \ar{d}{\text{remove }h}
 &
\left.
    \begin{tikzpicture}
      \node[ext, accepting] (v) at (-.5,0) {};
      \node[int] (vv) at (.5,0) {};
      \draw (v) edge +(-.5,.5) edge[->-] +(-.5,0) edge +(-.5,-.5) edge (vv)
      (vv) edge +(.5,.5) edge +(.5,0) edge +(.5,-.5);
      \end{tikzpicture}
      \right\} B
  \ar{d}{\text{remove }h}
\\
    \left.
    \begin{tikzpicture}
      \node[ext, accepting] (v) at (0,0) {};
      \draw (v) edge +(-.5,.5) edge +(-.5,0) edge +(-.5,-.5) edge +(.5,.5) edge +(.5,0) edge +(.5,-.5);
      \end{tikzpicture}
      \right\} B
      \ar{r}{\text{split}_B}
   &
  \left.
      \begin{tikzpicture}
        \node[ext, accepting] (v) at (-.5,0) {};
        \node[int] (vv) at (.5,0) {};
        \draw (v) edge +(-.5,.5) edge +(-.5,0) edge +(-.5,-.5) edge (vv)
        (vv) edge +(.5,.5) edge +(.5,0) edge +(.5,-.5);
        \end{tikzpicture}
        \right\} B
    \end{tikzcd} \quad .
    \]
We distinguish 4 cases.
\begin{itemize}
\item If the half-edge $h$ is not in $B$, as in the picture above, the two terms produced from $\delta_s^\circ \delta_\omega \Gamma$ and $\delta_\omega \delta_s^\circ \Gamma$ are isomorphic but come with opposite signs and hence cancel.
\item If $h\in B$, and $B$ contains exactly one marked half-edge, then the two terms again cancel:
\[
  \begin{tikzcd}
  \left.
  \begin{tikzpicture}
    \node[ext, accepting] (v) at (0,0) {};
    \draw (v) edge +(-.5,.5) edge +(-.5,0) edge +(-.5,-.5) edge +(.5,.5) edge[->-] +(.5,0) edge +(.5,-.5);
    \end{tikzpicture}
    \right\} B
    \ar{r}{\text{split}_B}
    \ar{d}{\text{remove }h}
 &
\left.
    \begin{tikzpicture}
      \node[ext, accepting] (v) at (-.5,0) {};
      \node[int] (vv) at (.5,0) {};
      \draw (v) edge +(-.5,.5) edge +(-.5,0) edge +(-.5,-.5) edge[->-] (vv)
      (vv) edge +(.5,.5) edge +(.5,0) edge +(.5,-.5);
      \end{tikzpicture}
      \right\} B
  \ar{d}{\text{remove }h}
\\
    \left.
    \begin{tikzpicture}
      \node[ext, accepting] (v) at (0,0) {};
      \draw (v) edge +(-.5,.5) edge +(-.5,0) edge +(-.5,-.5) edge +(.5,.5) edge +(.5,0) edge +(.5,-.5);
      \end{tikzpicture}
      \right\} B
      \ar{r}{\text{split}_B}
   &
  \left.
      \begin{tikzpicture}
        \node[ext, accepting] (v) at (-.5,0) {};
        \node[int] (vv) at (.5,0) {};
        \draw (v) edge +(-.5,.5) edge +(-.5,0) edge +(-.5,-.5) edge (vv)
        (vv) edge +(.5,.5) edge +(.5,0) edge +(.5,-.5);
        \end{tikzpicture}
        \right\} B
    \end{tikzcd} \quad .
    \]
\item If $h\in B$ and $B$ contains at least $3$ marked half-edges, then none of the splitting terms yields a contribution (i.e., those $B$ do not appear in the sum \eqref{equ:delta circ def}).
\item Finally, if $h\in B$ and $B$ contains exactly two marked half-edges, say $h$ and $h'$, then the terms corresponding to removing $h$ and $h'$ match and cancel:
\[
  \begin{tikzcd}
  \left.
  \begin{tikzpicture}
    \node[ext, accepting] (v) at (0,0) {};
    \draw (v) edge +(-.5,.5) edge +(-.5,0) edge +(-.5,-.5) edge[->-] +(.5,.5) edge[->-] +(.5,0) edge +(.5,-.5);
    \end{tikzpicture}
    \right\} B
    \ar{r}{\text{split}_B}
    \ar{d}{\text{remove }h,h'}
 &
0
  \ar{d}
\\
    \begin{tikzpicture}
      \node[ext, accepting] (v) at (0,0) {};
      \draw (v) edge +(-.5,.5) edge +(-.5,0) edge +(-.5,-.5) edge +(.5,.5) edge[->-] +(.5,0) edge +(.5,-.5);
      \end{tikzpicture}
-
\begin{tikzpicture}
  \node[ext, accepting] (v) at (0,0) {};
  \draw (v) edge +(-.5,.5) edge +(-.5,0) edge +(-.5,-.5) edge[->-] +(.5,.5) edge +(.5,0) edge +(.5,-.5);
  \end{tikzpicture}
      \ar{r}{\text{split}_B}
   &
   (+1-1) \cdot 
      \begin{tikzpicture}
        \node[ext, accepting] (v) at (-.5,0) {};
        \node[int] (vv) at (.5,0) {};
        \draw (v) edge +(-.5,.5) edge +(-.5,0) edge +(-.5,-.5) edge[->-] (vv)
        (vv) edge +(.5,.5) edge +(.5,0) edge +(.5,-.5);
        \end{tikzpicture}=0
    \end{tikzcd} \quad .
    \]
\end{itemize}

To show acyclicity we consider the filtration on $X$ by the number of vertices.
The $E^0$-page of the associated spectral sequence may be identified with the complex $(X_{g,n},\delta_\omega)$.
Hence it suffices to check that $H(X_{g,n},\delta_\omega)=0$. 
To this end consider the degree $-1$-operation $h \colon X_{g,n}\to X_{g,n}$ that sums over all ways of adding a half-edge to the distinguished set, 
\[
\resizebox{.97\hsize}{!}{
$\displaystyle{
h(\Gamma, e_1\wedge\cdots \wedge e_k \wedge h_1\wedge \cdots \wedge h_r)
=
\frac1{|H_*|}
(-1)^k
\sum_{h\in H_*\setminus\{h_1, \dots, h_r\} } 
h(\Gamma, e_1\wedge\cdots \wedge e_k \wedge h\wedge h_1\wedge \cdots \wedge h_r).
}$}
\]
Then $\delta_\omega h +h\delta_\omega$ is the identity map. It follows that $h$ is a contracting homotopy for $\delta_\omega$, and $H(X_{g,n},\delta_\omega)=0$.
Since $X_{g,n}$ is finite dimensional, the filtration is bounded and our spectral sequence converges to the cohomology. Hence $H(X_{g,n},\delta_s+\delta_\omega)=0$ as claimed.
\end{proof}

\subsection{Blown-up picture}
We now introduce an alternative graphical depiction of generators for $X_{g,n}$ that we call the blown-up picture. This equivalent encoding of the same information is obtained by ``blowing-up" the special vertex, i.e., removing the special vertex and making the incident half-edges into external legs, which we label by a special symbol $\omega$ (resp. $\epsilon$) according to whether the half-edges are marked or not.
For example
\[
    \begin{tikzpicture}[scale=1]
        \node[ext,accepting] (v1) at (0,0){};
        \node[int] (v2) at (180:1){};
        \node[int] (v3) at (60:1){};
        \node[int] (v4) at (-60:1){};
        \draw (v1) 
        edge[->-](v2)  edge[->-] (v3) edge (v4)  edge[loop right] (v2)
        (v2) edge[bend left] (v3) edge[bend right] (v4)  -- +(180:1.3) 
        (v3) edge (v4);
        \node (w) at (180:2.5) {$1$};
        \end{tikzpicture}
\quad \quad \mapsto \quad \quad 
\begin{tikzpicture}[scale=1, node distance=.8]
    \node[int] (v2) at (180:1){};
    \node[int] (v3) at (60:1){};
    \node[int] (v4) at (-60:1){};
    \node[below=of v2] (e1) {$\omega$};
    \node[right=of v3] (e2) {$\omega$};
    \node[right=of v4] (e3) {$\epsilon$};
    \node (e4) at (2.5,0) {$\epsilon$};
    \node[right=of e4] (e5) {$\epsilon$};
    \draw
    (v2) edge (e1) edge[bend left] (v3) edge[bend right] (v4)  -- +(180:1.3) 
    (v3) edge (e2) edge (v4)
    (v4) edge (e3)
    (e4) edge (e5);
    \node (w) at (180:2.5) {$1$};
    \end{tikzpicture}\, .
\]
Note that the graph on the right may be disconnected, though every connected component must contain at least one $\epsilon$- or $\omega$-leg.
When we talk about the blown-up components of a graph in $X_{g,n}$ below, we refer to the connected components of the blown-up picture. 

The differential has an equivalent description in the blown-up picture, as follows. The piece 
\begin{align*}
\delta_\omega:
    \begin{tikzpicture}
        \node (v) at (0,0) {$\omega$};
        \node[ext] (w) at (1,0) {$\cdots$};
        \draw (v) edge (w);
    \end{tikzpicture}
    &\mapsto 
    \begin{tikzpicture}
        \node (v) at (0,0) {$\epsilon$};
        \node[ext] (w) at (1,0) {$\cdots$};
        \draw (v) edge (w);
    \end{tikzpicture}
\end{align*}
replaces one $\omega$-decoration by $\epsilon$. The piece $\delta_{s}^\circ$ joins together a subset $S$ of the $\epsilon$- and $\omega$-legs, containing at most one $\omega$-leg, and attaches a new leg that is decorated by $\omega$ if $S$ contains an $\omega$ leg and $\epsilon$ otherwise: 
\begin{align*}
  \delta_{s}^\circ 
 \begin{tikzpicture}[baseline=-.8ex]
 \node[draw,circle] (v) at (0,.3) {$\Gamma$};
 \node (w1) at (-.7,-.5) {};
 \node (w2) at (-.25,-.5) {};
 \node (w3) at (.25,-.5) {};
 \node (w4) at (.7,-.5) {};
 \draw (v) edge (w1) edge (w2) edge (w3) edge (w4);
 \end{tikzpicture} 
 = 
 \sum_{B} 
 \begin{tikzpicture}[baseline=-.8ex]
 \node[draw,circle] (v) at (0,.3) {$\Gamma$};
 \node (w1) at (-.7,-.5) {};
 \node (w2) at (-.25,-.5) {};
 \node[int] (i) at (.4,-.5) {};
 \node (w4) at (.4,-1.3) {$\epsilon$ or $\omega$};
 \draw (v) edge (w1) edge (w2) edge[bend left] (i) edge (i) edge[bend right] (i) (w4) edge (i);
 \end{tikzpicture} \, .
 \end{align*}

\subsection{Truncations of $X_{g,n}$ and resolutions of $\bGK^{11,\circ}_{g,n}$} \label{sec:truncation}
We now show that two truncations of $X_{g,n}$ give natural resolutions of $\bGK^{11,\circ}_{g,n}$.
\begin{definition}
Let $\widetilde C_{g,n}\subset X_{g,n}$ be the dg subspace spanned by graphs that have at most $10$ distinguished half-edges at the special vertex, and define
\[
\widetilde B_{g,n}:= X_{g,n}/\widetilde C_{g,n}.    
\]
to be the quotient complex. We also denote appropriate degree shifted versions by 
\begin{align*}
    B_{g,n}&=\widetilde B_{g,n}[-22] \\
    C_{g,n}&=\widetilde C_{g,n}[-21].
\end{align*}
\end{definition}
\noindent We now show that $B_{g,n}$ and $C_{g,n}$ are resolutions of $\bGK^{11,\circ}_{g,n}$.
Consider $P \colon B_{g,n} \to  \bGK^{11,\circ}_{g,n}$, given by
\[
P(\Gamma,e_1\wedge\cdots\wedge e_k\wedge h_1\wedge \cdots \wedge h_r) =
\begin{cases}
(\Gamma, e_1\wedge\cdots\wedge e_k, \omega_{\{h_1,\dots,h_{11}\}}) & \text{if $r=11$,}\\
0 & \text{otherwise;} 
\end{cases}
\]
and  $ I \colon \bGK^{11,\circ}_{g,n} \to C_{g,n}$, given by
\[
    I(\Gamma, e_1\wedge\cdots\wedge e_k, \omega_{\{h_1,\dots,h_{11}\}})
    =
    \sum_{j=1}^{11}
    (-1)^{k+j-1}
    (\Gamma,e_1\wedge\cdots\wedge e_k\wedge h_1\wedge \cdots \hat h_j \cdots\wedge h_{11}).
\]

\begin{prop}
The maps $P$ and $I$ above are well-defined maps of dg vector spaces and induce isomorphisms on cohomology.
\end{prop}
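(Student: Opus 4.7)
The plan is to verify in turn that $P$ and $I$ are well-defined, that they commute with the differentials, and that they induce isomorphisms on cohomology via the acyclicity of $X_{g,n}$ from Lemma~\ref{lem:X well defined}.

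For well-definedness, a permutation $\sigma \in \bS_{11}$ of the distinguished half-edges $h_1,\ldots,h_{11}$ produces the sign $\sgn(\sigma)$ on the source of $P$ via \eqref{equ:X gen rel 1} and on the target via the $\bS_{11}$-sign action on $\omega_{\{h_1,\ldots,h_{11}\}}$; graph isomorphisms are analogous. For $I$, the sum $\sum_j (-1)^{k+j-1}(\Gamma, e \wedge \hat h_j)$ is manifestly antisymmetric in the $h_j$, matching the sign action on $\omega_A$. I would additionally check that $I$ annihilates the relations \eqref{eq:12rels}: for a 12-subset $B$, expanding $I\bigl(\sum_{j=1}^{12}(-1)^{j+1}\omega_{B \setminus b_j}\bigr)$ gives a double sum indexed by ordered pairs of elements removed from $B$, and the two orderings of each unordered pair contribute with opposite signs and cancel.

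For the chain map property of $P$, the component $\delta_\omega$ of $\delta_{B_{g,n}}$ acts trivially on the $r=11$ piece (its image lies in the quotiented-out $\widetilde C_{g,n}$). On the $r=12$ piece, $P(\delta_\omega x)$ is precisely a \eqref{eq:12rels}-relation, hence zero in $\bGK^{11,\circ}_{g,n}$. Compatibility of $\delta_s^\bullet$ and $\delta_s^\circ$ with the decoration follows from \eqref{equ:xistar}: the three cases $|B \cap A| \in \{0,1,\geq 2\}$ on the $\bGK^{11,\circ}$ side correspond to subsets $B$ containing zero, one, or at least two distinguished half-edges on the $X$ side, with the last case vanishing both by \eqref{equ:xistar} and by the constraint in \eqref{equ:delta circ def}. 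For $I$, the pairwise cancellation from the previous step gives $\delta_\omega I = 0$, and $\delta_s I = I \delta_s$ follows by another application of \eqref{equ:xistar}.

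For the quasi-isomorphism, my strategy uses the short exact sequence $0 \to \widetilde C_{g,n} \to X_{g,n} \to \widetilde B_{g,n} \to 0$: by acyclicity of $X_{g,n}$, the connecting map $\partial\colon H^k(\widetilde B_{g,n}) \xrightarrow{\sim} H^{k+1}(\widetilde C_{g,n})$ is an isomorphism. A direct computation on a cocycle $x \in X^{11}$ shows that, under the shifts $B = \widetilde B[-22]$ and $C = \widetilde C[-21]$, the composite $I \circ P$ agrees with $\partial$: lifting $P(x) = (\Gamma, e, \omega_{\{h_1,\ldots,h_{11}\}})$ back to $x \in X$ and applying $\delta_X$ yields $\delta_\omega x = \sum_j (-1)^{k+j-1}(\Gamma, e, \hat h_j) = I(P(x))$. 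Hence $I \circ P$ is a quasi-isomorphism, and it suffices to verify that one of $P$ or $I$ is individually. The hard part will be this final step: I would show that $\ker P$ is acyclic by filtering ascendingly by $r$ and using the contracting homotopy $h$ for $\delta_\omega$ from the proof of Lemma~\ref{lem:X well defined}, whose associated spectral sequence identifies $H(\ker P)$ with a truncation of the exact complex $(\bigoplus_p H(X^p, \delta_s), \delta_\omega)$; the interaction of $h$ with the subcomplex $K \subset X^{11}$ spanned by the \eqref{eq:12rels}-relations at the boundary of the filtration requires careful termwise bookkeeping. Once $P$ is known to be a quasi-isomorphism, the same follows for $I$.
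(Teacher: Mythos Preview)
Your overall architecture matches the paper's: verify $P$ is a chain map, deduce (or verify) that $I$ is, identify $I\circ P$ with the connecting map $\delta_\omega\colon B_{g,n}\to C_{g,n}$ coming from the acyclicity of $X_{g,n}$, and then show one of the two maps is individually a quasi-isomorphism. The paper, incidentally, gets the chain-map property of $I$ for free from the commutative triangle $I\circ P=\delta_\omega$ together with surjectivity of $P$, which is slightly cleaner than your direct check; but your direct check is fine.

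The gap is in your ``hard part''. If you filter $\ker P$ ascendingly by the number $r$ of $\omega$-legs, the associated graded carries the differential $\delta_s$ (since $\delta_\omega$ strictly lowers $r$), and the $E_1$-page is $\bigoplus_r H(\mathrm{gr}_r,\delta_s)$ with $d_1$ induced by $\delta_\omega$. The contracting homotopy $h$ from Lemma~\ref{lem:X well defined} contracts $\delta_\omega$, not $\delta_s$, and it does \emph{not} commute with $\delta_s^\circ$ (marking a half-edge and then splitting it off to a non-special vertex is not the same as the reverse order). So $h$ does not descend to a homotopy on the $E_1$-page, and you have no reason to expect $(\bigoplus_p H(X^p,\delta_s),\delta_\omega)$ to be exact; acyclicity of $X_{g,n}$ only gives $E_\infty=0$, not $E_2=0$.

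The fix is to swap the roles of the two filtrations, exactly as the paper does: filter by the number of \emph{vertices}. Then $\delta_s$ raises the filtration and $\delta_\omega$ preserves it, so the associated graded differential is $\delta_\omega$. On that page, $\ker P$ becomes the subcomplex $B_{g,n,\geq 12\omega}\oplus \delta_\omega B_{g,n,12\omega}$ of the acyclic complex $(X_{g,n},\delta_\omega)$, and a direct check (or the homotopy $h$) shows it is $\delta_\omega$-acyclic. Convergence of this bounded spectral sequence then gives $H(\ker P)=0$, hence $P$ is a quasi-isomorphism, and $I$ follows from $I\circ P=\delta_\omega$.
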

\begin{proof}
We start by checking that $P$ intertwines the differentials.  
First, we show that $P(\delta_\omega\Gamma) = 0$. If $\Gamma$ has 11 or more than 12 distinguished half-edges then this is clear by degree reasons. If $\Gamma$ has exactly 12 distinguished half-edges then $P(\delta_\omega\Gamma) = 0$ by Remark \ref{rem:symdecomp}, since $\delta_\omega$ produces expressions of the form \eqref{eq:12rels}, which vanish in $\bGK^{11,\circ}_{g,n}$.
Next, we have $P(\delta_s^\bullet\Gamma)=\delta_s^\bullet P(\Gamma)$, since the splitting of the non-special vertices is the same on both sides; see \eqref{equ:delta bullet def bGK} and \eqref{equ:delta bullet def}.
Finally, comparing \eqref{equ:delta circ def bGK} and \eqref{equ:delta circ def} we see that $P(\delta_s^\circ\Gamma)=\delta_s^\circ(\Gamma)$, because the handling of the marked half-edges in $\delta_s^\circ$ on $B_{g,n}$ reflects the pullback operation $\xi_B^*$ of \eqref{equ:xistar}.  It follows that $P$ intertwines the differentials, as required.

Next, note that the maps $P$ and $I$ fit into a commutative diagram 
\[
\begin{tikzcd}    
B_{g,n} \ar{r}{P} \ar{dr}[below]{\delta_\omega} & \bGK^{11,\circ}_{g,n} \ar{d}{I}\\
& C_{g,n} 
\end{tikzcd}.
\] 
Since $P$ is surjective and both $P$ and $\delta_\omega$ intertwine the differentials, so does $I$.

It remains to show that $P$ and $I$ are quasi-isomorphisms. Since $X_{g,n}$ is acyclic, $\delta_\omega \colon B_{g,n}\to C_{g,n}$ is a quasi-isomorphism. Since the above diagram commutes, it therefore suffices to show that $P$ is a quasi-isomorphism.  
To this end we first note that by Remark \ref{rem:symdecomp}
\[
  \bGK^{11,\circ} \cong B_{g,n}/(B_{g,n,\geq 12\omega} \oplus \delta_\omega B_{g,n,12\omega} ),
\]
where $B_{g,n,12\omega}$ (resp. $B_{g,n,\geq 12\omega}$) is the subspace of $B_{g,n}$ spanned by graphs with 12 (resp. $\geq 12$) $\omega$-legs.
We hence need to show that the projection 
\begin{equation}\label{equ:P on B}
  B_{g,n} \to B_{g,n}/(B_{g,n,\geq 12\omega} \oplus \delta_\omega B_{g,n,12\omega} )
\end{equation}
is a quasi-isomorphism. To this end we follow the argument for acyclicity of $X_{g,n}$ in the proof of Lemma~\ref{lem:X well defined}.
We consider on both sides of \eqref{equ:P on B} the spectral sequences from the filtration by the numbers of vertices in graphs.
On the first page of the spectral sequence, the differential on the left-hand side of \eqref{equ:P on B} is given by $\delta_\omega$, and on the right-hand side it is zero.
Since $(X_{g,n},\delta_\omega)$ is acyclic and $B_{g,n}$ is the a truncation at 11 $\omega$-legs, we have
\[
  H(B_{g,n},\delta_\omega) 
  =
  B_{g,n,11\omega}
  /
  \mathrm{im}(B_{g,n,12\omega}\xrightarrow{\delta_\omega} B_{g,n,1\omega})
  \cong B_{g,n}/(B_{g,n,\geq 12\omega} \oplus \delta_\omega B_{g,n,12\omega} ).
\]
Hence \eqref{equ:P on B} induces an isomorphism on the $E^1$-page of the spectral sequence, and is a quasi-isomorphism by the spectral sequence comparison lemma.
\end{proof}

\begin{corollary}
There are natural isomorphisms
\[
H^k(B_{g,n}) \otimes \Delta \cong \gr_{11}H^k_c(\M_{g,n}) \cong H^k(C_{g,n}) \otimes \Delta.
\]
\end{corollary}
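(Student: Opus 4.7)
The plan is to obtain both isomorphisms by concatenating the chain of quasi-isomorphisms assembled in the preceding subsections; no new construction is needed, only bookkeeping. The skeleton is
\[
\gr_{11} H^k_c(\M_{g,n}) \;\cong\; H^k(\GK^{11}_{g,n}) \;\cong\; H^k(\bGK^{11}_{g,n}) \;\cong\; H^k(\bGK^{11,\circ}_{g,n}) \otimes \Delta \;\cong\; H^k(B_{g,n}) \otimes \Delta,
\]
and symmetrically with $C_{g,n}$ on the right-hand end.

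Concretely, I would proceed as follows. First, invoke the Feynman-transform identification $\gr_{11} H^\bullet_c(\M_{g,n}) \cong H^\bullet(\GK^{11}_{g,n}, \delta)$ from the Getzler--Kapranov setup recalled in Section~2.2. Next, apply the proposition showing that the quotient $\GK^{11}_{g,n} \twoheadrightarrow \bGK^{11}_{g,n}$ is a quasi-isomorphism. Then use the canonical tensor-factor decomposition $\bGK_{g,n}^{11} \cong \bGK^{11,\circ}_{g,n} \otimes \Delta$, which holds because $\Delta$ enters only through the coefficient decoration at the unique special vertex and the pullback maps $\xi^*$, $\eta^*$ act trivially on this rank-two Hodge tensor factor. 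Since $\Delta$ is a flat $\Q$-module (in fact, two-dimensional), this factorization commutes with passage to cohomology, giving $H^\bullet(\bGK^{11}_{g,n}) \cong H^\bullet(\bGK^{11,\circ}_{g,n}) \otimes \Delta$. Finally, apply the preceding proposition, which establishes that both $P \colon B_{g,n} \to \bGK^{11,\circ}_{g,n}$ and $I \colon \bGK^{11,\circ}_{g,n} \to C_{g,n}$ are quasi-isomorphisms; tensoring each with $\Delta$ yields the two displayed isomorphisms.

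There is no serious obstacle: all the analytic content has been absorbed into the preceding proposition and into Lemma~\ref{lem:X well defined}, and each step in the chain is either a quotient by a (direct) subcomplex, a tensor factorization with a flat module, or an already-established quasi-isomorphism. The one point worth a sentence in the write-up is naturality: the isomorphisms are compatible with the $\bS_n$-actions (because $\Delta$ carries the trivial $\bS_n$-action and all the combinatorial maps are $\bS_n$-equivariant by construction) and respect the Hodge/Galois structure of weight $11$ (because $\Delta$ is tensored on as a whole, preserving its weight $11$ structure). Thus the corollary follows by simply composing the constituent natural isomorphisms.
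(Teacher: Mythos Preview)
Your proposal is correct and follows exactly the route the paper takes: the corollary is stated without a separate proof and is intended as an immediate consequence of chaining the identification $\gr_{11}H^\bullet_c(\M_{g,n})\cong H^\bullet(\GK^{11}_{g,n})$, the quotient quasi-isomorphism to $\bGK^{11}_{g,n}$, the tensor splitting $\bGK^{11}_{g,n}\cong \bGK^{11,\circ}_{g,n}\otimes\Delta$, and the preceding proposition on $P$ and $I$. Your added remarks on naturality and $\bS_n$-equivariance are a mild elaboration but fully consistent with the paper's intent.
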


\section{Explicit computations in low excess}\label{sec:low excess}
In order to study the cohomology of $B_{g,n}$, we introduce a statistic on graph generators that we call excess. Most importantly for our purposes, the excess is non-negative, additive on blown-up components, and graphs with small excess are relatively simple and easy to classify.

\begin{definition}
The excess of a generator $\Gamma$ of $X_{g,n}$ is 
\[
E(\Gamma) = 3(g-1) +2n - 2\# \omega,  
\] 
where $\# \omega$ is the number of $\omega$-legs of $\Gamma$.
\end{definition}

We also define
\[
E(g,n) := 3g +2n - 25.
\]
Any generator $\Gamma$ for $B_{g,n}$ has $\# \omega\geq 11$, and hence
\[
E(\Gamma) \leq E(g,n). 
\]
Note that $E(\Gamma) \equiv E(g,n) \mod 2$, so all generators for $B_{g,n}$ have even or odd excess, when $g$ is odd or even, respectively.  Also, for each fixed $k$, there are only finitely many pairs $(g,n)$ such that $E(g,n) = k$.

\medskip

Write each generator $\Gamma$ for $B_{g,n}$ as a union of its blown-up components:
\[
\Gamma = C_1\cup\cdots \cup C_k.
\]
Let $g_i$ be the contribution of $C_i$ to the genus of $\Gamma$.  More precisely,
\[
g_i = h^1(C_i) + \# \epsilon + \#\omega-1,
\]
i.e., the loop order of $C_i$ plus the number of its $\epsilon$ and $\omega$ labeled legs minus one.  Then the excess of $C_i$ is 
\[
E(C_i)
:= 3g_i +2n_i - 2\# \omega
\]

\begin{lemma}\label{lem:excess}
The excess is additive over blown-up components, i.e.
\begin{equation}\label{equ:E additive}
E(\Gamma) = E(C_1\cup\cdots \cup C_k)= E(C_1)+ \cdots + E(C_k),  
\end{equation}
and the excess of each blown-up component is nonnegative.  
\end{lemma}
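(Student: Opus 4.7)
The plan is to handle additivity and non-negativity separately. For additivity, the equalities $n = \sum_i n_i$ and $\#\omega = \sum_i \#\omega_i$ are immediate, since the legs of $\Gamma$ are partitioned by the blown-up components. It then suffices to verify $g - 1 = \sum_i g_i$, which I would do by a direct Euler-characteristic computation: reconstruct the graph $\Gamma$ from $\bigsqcup_i C_i$ together with an isolated copy of the special vertex $v^*$ by attaching one edge from $v^*$ to each of the $\#\omega + \#\epsilon$ labeled legs. This adds $\#\omega + \#\epsilon$ edges to a graph with $k + 1$ connected components; exactly $k$ of these edges merge distinct components into one, while each of the remaining $\#\omega + \#\epsilon - k$ increases $h^1$ by one. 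Hence
\[
g - 1 = h^1(\Gamma) = \sum_i h^1(C_i) + \#\omega + \#\epsilon - k = \sum_i g_i,
\]
and additivity of the excess follows by linearity.

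For non-negativity, I would rewrite
\[
E(C) = 3 h^1(C) + 3\#\epsilon_C + \#\omega_C + 2 n_C - 3
\]
and argue by cases on $h^1(C)$ and the number $V_C$ of internal vertices. When $h^1(C) \geq 1$, the bound $E(C) \geq 0$ is immediate. When $h^1(C) = 0$ and $V_C \geq 1$, the hypothesis that every non-special vertex has valence at least $3$, combined with the Euler identity for trees, forces the total number of legs $\#\epsilon_C + \#\omega_C + n_C$ to be at least $V_C + 2 \geq 3$. Splitting on whether $\#\epsilon_C \geq 1$ then gives $E(C) \geq 0$ in each subcase by elementary arithmetic.

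The only delicate case is $h^1(C) = 0$ with $V_C = 0$. Here $C$ consists of a single edge joining two labeled legs, coming from a tadpole at the special vertex of $\Gamma$. The $\epsilon$-$\epsilon$ and $\omega$-$\epsilon$ subcases give $E(C) = 3$ and $E(C) = 1$ respectively, but the formula yields $E(C) = -1$ in the $\omega$-$\omega$ subcase, so this is precisely where the lemma is non-obvious. I would dispose of this subcase by observing that such a tadpole admits a graph automorphism interchanging its two marked half-edges; this acts as an odd permutation of the ordered set of edges and marked half-edges, so by the relations \eqref{equ:X gen rel 2} and \eqref{equ:X gen rel 1} the corresponding generator equals its own negative and vanishes in $X_{g,n}$. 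Consequently no $\omega$-$\omega$ component occurs in a nonzero generator, and the non-negativity claim is restored. This $\omega$-$\omega$ subcase is the main obstacle; the rest is routine.
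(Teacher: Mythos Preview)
Your argument is essentially the same as the paper's, but there is a small omission in the case $h^1(C)=0$, $V_C=0$. You assert that such a component ``consists of a single edge joining two labeled legs, coming from a tadpole at the special vertex,'' and then analyze only the three possibilities $\epsilon$--$\epsilon$, $\omega$--$\epsilon$, $\omega$--$\omega$. But a blown-up component with no internal vertex can also arise from a numbered leg attached directly to the special vertex, giving the single-edge components $\omega$--$j$ and $\epsilon$--$j$. These are not tadpoles, yet they have $V_C=0$, so your case division does not cover them. They cause no trouble---one computes $E(\omega\text{--}j)=0$ and $E(\epsilon\text{--}j)=2$---but the sentence as written is an incorrect claim about the structure of the $V_C=0$ case and should be amended.

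Apart from this, your proof matches the paper's closely. The paper organizes the tree case by first disposing of all components with $\#\epsilon\geq 1$ (using the same rewriting $E(C)=3h^1+3\#\epsilon+\#\omega+2n-3$), leaving only trees whose leaves are numbered or $\omega$-labeled; it then splits on whether the tree has at least three leaves, reducing to exactly the two-leaf cases $\omega$--$\omega$ (which vanishes by the odd symmetry you describe) and $\omega$--$j$ (where $E=0$). Your split on $V_C$ versus the paper's split on the number of leaves are equivalent once the missing $\omega$--$j$, $\epsilon$--$j$ cases are restored.
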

\begin{proof}
The formula \eqref{equ:E additive} for $E(\Gamma)$ is evident since the genus of $\Gamma$ is the sum over the genus contributions of the blown-up components, plus one to take into account that the special vertex has genus one.

If either $h^1(C_i) \geq 1$ or $\#\epsilon\geq 1$ then 
\[
  E(C_i) = 3 h^1(C_i) + 3\#\epsilon + 2n_i +\# \omega - 3 \geq 0.
\]
It remains to show that $E(C_i) \geq 0$ when $h^1(g_i) =\#\epsilon=0$. Suppose $C_i$ is a tree with $m$ leaves that can be either numbered or $\omega$-decorated. 
If the tree has at least 3 leaves,  then $E(C_i)$ is at least $2n_i +\# \omega -3 \geq 0$. Any tree has at least two leaves, and the remaining cases are:
\[
  \begin{tikzpicture}
    \node (v) at (0,0) {$\omega$};
    \node (w) at (1,0) {$\omega$};
    \draw (v) edge (w);
  \end{tikzpicture}
\quad\text{or}\quad  
\begin{tikzpicture}
  \node (v) at (0,0) {$\omega$};
  \node (w) at (1,0) {$j$};
  \draw (v) edge (w);
\end{tikzpicture}\, .
\] 
The first graph vanishes by symmetry and for the second we have $E(C_i)=0$.
\end{proof}

\begin{cor} \label{cor:Eneg}
If $E(g,n) <0$ then $
\gr_{11} H^\bullet_c(\M_{g,n}) =0.$
\end{cor}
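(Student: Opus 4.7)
The plan is to reduce the statement to a vanishing of the graph complex $B_{g,n}$ itself (not just its cohomology), and then to derive a contradiction from the excess inequalities already established.

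First, I would invoke Proposition~\ref{prop:Bgn intro} (equivalently, the Corollary just stated), which gives the identification
\[
\gr_{11} H^\bullet_c(\M_{g,n}) \cong H^\bullet(B_{g,n},\delta) \otimes \Delta.
\]
It therefore suffices to show that $B_{g,n} = 0$ as a graded vector space whenever $E(g,n) < 0$, since the cohomology of the zero complex is zero.

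Next, I would combine the two excess bounds in the preceding discussion. On the one hand, every generator $\Gamma$ of $B_{g,n}$ has at least $11$ $\omega$-legs, so the definition of $E(\Gamma)$ gives $E(\Gamma) \leq E(g,n)$. On the other hand, Lemma~\ref{lem:excess} writes $E(\Gamma)$ as a sum $E(C_1) + \cdots + E(C_k)$ over the blown-up components of $\Gamma$, and asserts $E(C_i) \geq 0$ for each $i$. Hence $E(\Gamma) \geq 0$ for every generator, and the assumption $E(g,n) < 0$ forces the set of generators of $B_{g,n}$ to be empty.

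Since there is no step that requires more than citing the two facts above, I do not expect any genuine obstacle; the only thing to double-check is that the inequality $E(\Gamma) \leq E(g,n)$ is indeed strict enough, i.e., that the bound uses $\#\omega \geq 11$ as required for generators of $B_{g,n}$ (and not merely of the larger complex $X_{g,n}$). This confirms that the corollary reduces to a one-line argument once the additivity and nonnegativity of excess on blown-up components are in place.
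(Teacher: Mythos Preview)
Your proof is correct and matches the paper's approach exactly: the paper's one-line proof simply says that by Lemma~\ref{lem:excess} the complex $B_{g,n}$ is zero when $E(g,n)<0$, and you have unpacked precisely that reasoning (together with the implicit appeal to Proposition~\ref{prop:Bgn intro}).
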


\begin{proof}
By Lemma~\ref{lem:excess}, if $E(g,n)$ is negative, the complex $B_{g,n}$ is $0$.
\end{proof}

\begin{lemma} \label{lem:excessloop}
If $h^1(C_i) \geq 1$ then $E(C_i)\geq 5$.
\end{lemma}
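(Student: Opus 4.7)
The plan is to combine the formula
\[
E(C_i) = 3 h^1(C_i) + 3\#\epsilon + \#\omega + 2 n_i - 3,
\]
obtained in the proof of Lemma~\ref{lem:excess}, with the valence bound at internal vertices and the vanishing of generators by sign. Throughout, I would assume $C_i$ has no tadpoles at non-special vertices (forbidden by construction) and no pair of parallel edges, since swapping a pair of parallel edges is a graph automorphism acting as $-1$ on the edge ordering, so such generators vanish.

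Let $v$, $e$, $L$ denote the numbers of internal vertices, internal edges, and external legs of $C_i$. The valence inequality $2e + L \geq 3v$ together with $e = v + h^1(C_i) - 1$ gives $L \geq v - 2h^1(C_i) + 2$. For $h^1(C_i) \geq 2$, the absence of bigons forces $v \geq 4$ when $h^1 = 2$ (the minimal shape being $K_4$ with one edge removed), whence $L \geq 2$ and $E(C_i) \geq 3h^1 - 3 + L \geq 5$. For $h^1 \geq 3$, using $L \geq 1$ (every blown-up component has at least one $\omega$- or $\epsilon$-leg), $E(C_i) \geq 3h^1 - 2 \geq 7$.

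For $h^1(C_i) = 1$ I would rewrite $E(C_i) = L + 2\#\epsilon + n_i \geq L$; the valence bound now gives $L \geq v \geq 3$, since the unique cycle has length at least $3$. If $L \geq 5$ or $\#\epsilon \geq 1$, then $E(C_i) \geq 5$. Otherwise $L \in \{3, 4\}$ and $\#\epsilon = 0$, and a direct enumeration of connected graphs with $h^1 = 1$, $v \leq L \leq 4$, no multi-edges, and internal vertices of valence $\geq 3$ shows that the $E < 5$ configurations are: (i) a triangle with three $\omega$-legs; (ii) a triangle with two $\omega$-legs and one numbered leg; (iii) a triangle with four $\omega$-legs distributed $2\!+\!1\!+\!1$; (iv) a $4$-cycle with one $\omega$-leg per vertex; (v) a triangle with a pendant vertex bearing two $\omega$-legs and one $\omega$-leg on each of the remaining cycle vertices. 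The useful observation is that when $\#\epsilon = 0$, each $\omega$-leg contributes both a structural edge and a marked half-edge to the orientation, so those two contributions cancel in the sign of any graph automorphism, and the total sign reduces to the sign of the induced permutation on the internal edges alone. In each of (i)--(v) I would then exhibit an involution acting as a single transposition on internal edges --- a reflection through a cycle vertex in (i)--(iii) and (v), and the reflection through midpoints of two opposite edges in (iv) --- giving total sign $-1$, so each such generator vanishes.

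The hard part will be the case analysis for $h^1(C_i) = 1$ with $L \in \{3, 4\}$: one must enumerate all such shapes, track the orientation carefully, and select the correct involution in each case. Case (iv) is particularly delicate, as the more natural reflection of the $4$-cycle through opposite vertices gives sign $+1$, not $-1$, and one must instead use the reflection through midpoints of opposite edges.
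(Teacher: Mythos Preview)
Your proof is correct and follows essentially the same route as the paper: split into $h^1 \geq 3$, $h^1 = 2$, and $h^1 = 1$, and in the last case enumerate the handful of graphs with at most four legs and kill each by an odd symmetry. You are somewhat more explicit than the paper---for $h^1=2$ you argue $v\geq 4$ where the paper just asserts no such graph exists, and your observation that the $\omega$-leg contributions to the automorphism sign cancel in pairs (reducing everything to the sign on internal edges alone) is a clean organizing device the paper leaves implicit; the paper in turn is terser about the $h^1=1$ enumeration, grouping your cases (iii) and (v) together as ``one $\omega$ replaced by a forest with two $\omega$-legs.''
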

\begin{proof}
The argument is similar to the proof of Lemma~\ref{lem:excess}.  First, note that if $h^1(C_i) \geq 3$ then $E(C_i)\geq 5$.
If $h^1(C_i) = 2$, then the only graphs that would produce $E(C_i)< 5$ need to have $\#\epsilon=2n_i=0$ and $\# \omega=1$.
But there is no such (non-vanishing) loop order 2 graph.
Finally, suppose $h^1(C_i) = 1$. The general loop order one graph has the form 
\[
  \begin{tikzpicture}[baseline=-.65ex]
  \node (v0) at (0:1) {$\cdots$};
  \foreach \a [count=\ai, evaluate=\prev using int(\ai -1)] in {60,120,180,240,300}
    { 
      \node[int] (v\ai) at (\a:1) {};
      \node (w\ai) at (\a:1.5) {$?$};
      \draw (v\ai) edge (w\ai);
      \draw (v\ai) edge (v\prev);
    }
  \draw (v0) edge (v5);
  \end{tikzpicture}
\] 
with the $?$ representing an $\epsilon$, $\omega$, a numbered leg or a tree or forest to be attached.
Clearly, if the graph has $\geq 5$ legs then $E(C_i)\geq 5$.
Also note that the length of the inner loop must be at least three, otherwise the graph has a double edge and vanishes.
If the graph has loop length 4, the only case to be considered is that of all 4 legs being $\omega$-legs. 
\[
  \begin{tikzpicture}[baseline=-.65ex]
  \foreach \a [count=\ai, evaluate=\prev using int(\ai -1)] in {0,90,180,270}
    { 
      \node[int] (v\ai) at (\a:1) {};
      \node (w\ai) at (\a:1.5) {$\omega$};
      \draw (v\ai) edge (w\ai);
      \draw (v\ai) edge (v\prev);
    }
  \draw (v1) edge (v4);
  \end{tikzpicture}
\]
This graph has an odd symmetry and vanishes in $B_{g,n}$.
For loop length three we have the graph
\[
  \begin{tikzpicture}[baseline=-.65ex]
  \foreach \a [count=\ai, evaluate=\prev using int(\ai -1)] in {0,120,240}
    { 
      \node[int] (v\ai) at (\a:1) {};
      \node (w\ai) at (\a:1.5) {$\omega$};
      \draw (v\ai) edge (w\ai);
      \draw (v\ai) edge (v\prev);
    }
  \draw (v1) edge (v3);
  \end{tikzpicture},
\]
and its variants in which one $\omega$ is replaced by a number or a forest with 2 $\omega$-legs. In each case, the graph has an odd symmetry and vanishes in $B_{g,n}$.
\end{proof}

\noindent Using Lemma~\ref{lem:excessloop}, the cohomology of $B_{g,n}$ can be computed relatively easily as long as $E(g,n)\leq 4$, since the generating graphs are forests. We now carry through the details for $E(g,n)\leq 3$. 

\subsection{Excess 0}
By Lemma \ref{lem:excess}, the blown-up picture of a graph of excess zero is a union of connected components of excess zero.
The only such components are of the form 
\[
  \begin{tikzpicture}
    \node (v) at (0,0) {$\omega$};
    \node (w) at (1,0) {$j$};
    \draw (v) edge (w);
  \end{tikzpicture}
\quad\text{or}\quad  
\begin{tikzpicture}
  \node[int] (i) at (0,.5) {};
  \node (v1) at (-.5,-.2) {$\omega$};
  \node (v2) at (0,-.2) {$\omega$};
  \node (v3) at (.5,-.2) {$\omega$};
\draw (i) edge (v1) edge (v2) edge (v3);
\end{tikzpicture}\, .
\] 

Thus, if $E(g,n) = 0$, the generators of $B_{g,n}$ have the following form:
\[
  \Gamma^{(0)}=
  \begin{tikzpicture}
    \node (v) at (0,0) {$\omega$};
    \node (w) at (1,0) {$1$};
    \draw (v) edge (w);
  \end{tikzpicture}
  \cdots 
  \begin{tikzpicture}
    \node (v) at (0,0) {$\omega$};
    \node (w) at (1,0) {$n$};
    \draw (v) edge (w);
  \end{tikzpicture}
  \begin{tikzpicture}
    \node[int] (i) at (0,.5) {};
    \node (v1) at (-.5,-.2) {$\omega$};
    \node (v2) at (0,-.2) {$\omega$};
    \node (v3) at (.5,-.2) {$\omega$};
  \draw (i) edge (v1) edge (v2) edge (v3);
  \end{tikzpicture}
  \cdots 
  \begin{tikzpicture}
    \node[int] (i) at (0,.5) {};
    \node (v1) at (-.5,-.2) {$\omega$};
    \node (v2) at (0,-.2) {$\omega$};
    \node (v3) at (.5,-.2) {$\omega$};
  \draw (i) edge (v1) edge (v2) edge (v3);
  \end{tikzpicture}
\]
Note that there are $n$ $(\omega-j)$-edges and $\frac {g-1}{2}$ tripods with three $\omega$-legs each.
The cohomological degree of such a generator is $k=11+\frac32 (g-1)$.

One hence arrives at the following list of cases in which $\gr_{11} H_c^k(\M_{g,n})$ is concentrated in a single degree $k$, and isomorphic to $\Delta$. The $\bbS_n$-action is by the sign representation in each case.
\begin{align*}   H^k(B_{1,11}) &=
    \begin{cases}
        V_{1^{11}} & \text{for $k=11$} \\
        0 & \text{otherwise}
    \end{cases}
&
H^k(B_{3,8}) &=
    \begin{cases}
        V_{1^{8}} & \text{for $k=14$} \\
        0 & \text{otherwise}
    \end{cases}
\\
H^k(B_{5,5}) &=
    \begin{cases}
        V_{1^5} & \text{for $k=17$} \\
        0 & \text{otherwise}
    \end{cases}     
    &
    H^k(B_{7,2}) &=
    \begin{cases}
        V_{1^2} & \text{for $k=20$} \\
        0 & \text{otherwise}
    \end{cases}     \end{align*}

\subsection{Excess 1} \label{sec:exc1}
Suppose $E(g,n) = 1$.  Each generator for $B_{g,n}$ has all connected components of excess 0, except for one of excess 1.
The connected graphs of excess 1 are of the form:
\begin{equation}\label{equ:exc 1 trees}
  \begin{tikzpicture}
    \node (v) at (0,0) {$\omega$};
    \node (w) at (1,0) {$\epsilon$};
    \draw (v) edge (w);
  \end{tikzpicture}
\quad\text{or}\quad  
\begin{tikzpicture}
  \node[int] (i) at (0,.5) {};
  \node (v1) at (-.5,-.2) {$j$};
  \node (v2) at (0,-.2) {$\omega$};
  \node (v3) at (.5,-.2) {$\omega$};
\draw (i) edge (v1) edge (v2) edge (v3);
\end{tikzpicture}
\quad\text{or}\quad  
\begin{tikzpicture}
  \node[int] (i) at (0,.5) {};
  \node (v1) at (-.6,-.2) {$\omega$};
  \node (v2) at (-.2,-.2) {$\omega$};
  \node (v3) at (.2,-.2) {$\omega$};
  \node (v4) at (.6,-.2) {$\omega$};
\draw (i) edge (v1) edge (v2) edge (v3) edge (v4);
\end{tikzpicture}
\quad\text{or}\quad  
\begin{tikzpicture}
  \node[int] (i) at (0,.5) {};
  \node[int] (j) at (1,.5) {};
  \node (v1) at (-.5,-.2) {$\omega$};
  \node (v2) at (0,-.2) {$\omega$};
  \node (v3) at (1,-.2) {$\omega$};
  \node (v4) at (1.5,-.2) {$\omega$};
\draw (i) edge (v1) edge (v2) edge (j) (j) edge (v3) edge (v4);
\end{tikzpicture}
\, .
\end{equation}
The third graph maps to the fourth under the vertex splitting differential, and hence graphs with these components do not contribute to cohomology; we may simplify $B_{g,n}$ by killing these terms. 

The remaining excess 1 graphs are of the form 
\[
  \Gamma^{(1)}=
  \begin{tikzpicture}
    \node (v) at (0,0) {$\omega$};
    \node (w) at (1,0) {$1$};
    \draw (v) edge (w);
  \end{tikzpicture}
  \cdots 
  \begin{tikzpicture}
    \node (v) at (0,0) {$\omega$};
    \node (w) at (1,0) {$n$};
    \draw (v) edge (w);
  \end{tikzpicture}
  \begin{tikzpicture}
    \node[int] (i) at (0,.5) {};
    \node (v1) at (-.5,-.2) {$\omega$};
    \node (v2) at (0,-.2) {$\omega$};
    \node (v3) at (.5,-.2) {$\omega$};
  \draw (i) edge (v1) edge (v2) edge (v3);
  \end{tikzpicture}
  \cdots 
  \begin{tikzpicture}
    \node[int] (i) at (0,.5) {};
    \node (v1) at (-.5,-.2) {$\omega$};
    \node (v2) at (0,-.2) {$\omega$};
    \node (v3) at (.5,-.2) {$\omega$};
  \draw (i) edge (v1) edge (v2) edge (v3);
  \end{tikzpicture}
  \begin{tikzpicture}
    \node (v) at (0,0) {$\omega$};
    \node (w) at (1,0) {$\epsilon$};
    \draw (v) edge (w);
  \end{tikzpicture}
\]
or 
\[
  \Gamma^{(1)}_j=
  \begin{tikzpicture}
    \node (v) at (0,0) {$\omega$};
    \node (w) at (1,0) {$1$};
    \draw (v) edge (w);
  \end{tikzpicture}
  \cdots 
  \begin{tikzpicture}
    \node (v) at (0,0) {$\omega$};
    \node (w) at (1,0) {$n$};
    \draw (v) edge (w);
  \end{tikzpicture}
  \begin{tikzpicture}
    \node[int] (i) at (0,.5) {};
    \node (v1) at (-.5,-.2) {$\omega$};
    \node (v2) at (0,-.2) {$\omega$};
    \node (v3) at (.5,-.2) {$\omega$};
  \draw (i) edge (v1) edge (v2) edge (v3);
  \end{tikzpicture}
  \cdots 
  \begin{tikzpicture}
    \node[int] (i) at (0,.5) {};
    \node (v1) at (-.5,-.2) {$\omega$};
    \node (v2) at (0,-.2) {$\omega$};
    \node (v3) at (.5,-.2) {$\omega$};
  \draw (i) edge (v1) edge (v2) edge (v3);
  \end{tikzpicture}
  \begin{tikzpicture}
    \node[int] (i) at (0,.5) {};
    \node (v1) at (-.5,-.2) {$j$};
    \node (v2) at (0,-.2) {$\omega$};
    \node (v3) at (.5,-.2) {$\omega$};
  \draw (i) edge (v1) edge (v2) edge (v3);
  \end{tikzpicture},
\]
with the understanding that there is no $(\omega-j)$-edge in $\Gamma^{(1)}_j$.
Modulo terms involving the fourth graph in \eqref{equ:exc 1 trees}, which we ignore, the differential is given by 
\begin{equation}\label{eq:diff-excess1}
  \Gamma^{(1)}\mapsto \pm \sum_{j=1}^n (-1)^j\Gamma^{(1)}_j.
\end{equation}
It follows that the cohomology of $B_{g,n}$ (with $E(g,n)=1$) is one copy of the irreducible $\bbS_n$-representation $V_{21^{n-2}}$ in degree $k=10+\frac 32 g$, given by the cokernel of \eqref{eq:diff-excess1}.
Concretely, this applies to the cases:
\begin{align*}
    H^k(B_{2,10}) &=
    \begin{cases}
        V_{21^{8}} & \text{for $k=13$} \\
        0 & \text{otherwise}
    \end{cases}
&
    H^k(B_{4,7}) &=
    \begin{cases}
        V_{21^{5}} & \text{for $k=16$} \\
        0 & \text{otherwise}
    \end{cases}
\\
    H^k(B_{6,4}) &=
    \begin{cases}
        V_{21^2} & \text{for $k=19$} \\
        0 & \text{otherwise}
    \end{cases}
    &
    H^k(B_{8,1}) &= 0.
\end{align*}

\subsection{Excess 2} \label{sec:exc2}
Suppose $E(g,n) = 2$.  A generator for $B_{g,n}$ of excess 2 has either two connected components of excess 1, or one of excess 2.
The connected components of excess 2 are
\begin{equation}\label{equ:exc 2 trees}
    \begin{tikzpicture}
      \node (v) at (0,0) {$j$};
      \node (w) at (1,0) {$\epsilon$};
      \draw (v) edge (w);
    \end{tikzpicture}
  \quad\text{or}\quad  
  \begin{tikzpicture}
    \node[int] (i) at (0,.5) {};
    \node (v1) at (-.5,-.2) {$\epsilon$};
    \node (v2) at (0,-.2) {$\omega$};
    \node (v3) at (.5,-.2) {$\omega$};
  \draw (i) edge (v1) edge (v2) edge (v3);
  \end{tikzpicture}
  \quad\text{or}\quad  
  \begin{tikzpicture}
    \node[int] (i) at (0,.5) {};
    \node (v1) at (-.5,-.2) {$i$};
    \node (v2) at (0,-.2) {$j$};
    \node (v3) at (.5,-.2) {$\omega$};
  \draw (i) edge (v1) edge (v2) edge (v3);
  \end{tikzpicture}
  \quad\text{or trees with at least 4 leaves}\quad 
  \, .
  \end{equation}
  As in the case $E(g,n) = 1$, one readily checks that trees with at least 4 leaves do not contribute to the cohomology of $B_{g,n}$, and can be killed by a chain homotopy.  Thus we ignore such terms.  The remaining reduced version of $B_{g,n}$ is generated by the single graph $\Gamma^{(0)}$ of excess $0$ in degree $10+\frac32(g-1)$, along with the following graphs of excess 2:
\[
    \Gamma^{(2)}_{\epsilon}:=
  \begin{tikzpicture}
    \node (v) at (0,0) {$\omega$};
    \node (w) at (1,0) {$1$};
    \draw (v) edge (w);
  \end{tikzpicture}
  \cdots 
  \begin{tikzpicture}
    \node (v) at (0,0) {$\omega$};
    \node (w) at (1,0) {$n$};
    \draw (v) edge (w);
  \end{tikzpicture}
  \begin{tikzpicture}
    \node[int] (i) at (0,.5) {};
    \node (v1) at (-.5,-.2) {$\omega$};
    \node (v2) at (0,-.2) {$\omega$};
    \node (v3) at (.5,-.2) {$\omega$};
  \draw (i) edge (v1) edge (v2) edge (v3);
  \end{tikzpicture}
  \cdots 
  \begin{tikzpicture}
    \node[int] (i) at (0,.5) {};
    \node (v1) at (-.5,-.2) {$\omega$};
    \node (v2) at (0,-.2) {$\omega$};
    \node (v3) at (.5,-.2) {$\omega$};
  \draw (i) edge (v1) edge (v2) edge (v3);
  \end{tikzpicture}
  \begin{tikzpicture}
    \node[int] (i) at (0,.5) {};
    \node (v1) at (-.5,-.2) {$\epsilon$};
    \node (v2) at (0,-.2) {$\omega$};
    \node (v3) at (.5,-.2) {$\omega$};
  \draw (i) edge (v1) edge (v2) edge (v3);
  \end{tikzpicture}
\]
\[
    \Gamma^{(2)}_{\epsilon j}:=
    \begin{tikzpicture}
        \node (v) at (0,0) {$\epsilon$};
        \node (w) at (1,0) {$j$};
        \draw (v) edge (w);
    \end{tikzpicture}
  \begin{tikzpicture}
    \node (v) at (0,0) {$\omega$};
    \node (w) at (1,0) {$1$};
    \draw (v) edge (w);
  \end{tikzpicture}
  \cdots 
  \begin{tikzpicture}
    \node (v) at (0,0) {$\omega$};
    \node (w) at (1,0) {$n$};
    \draw (v) edge (w);
  \end{tikzpicture}
  \begin{tikzpicture}
    \node[int] (i) at (0,.5) {};
    \node (v1) at (-.5,-.2) {$\omega$};
    \node (v2) at (0,-.2) {$\omega$};
    \node (v3) at (.5,-.2) {$\omega$};
  \draw (i) edge (v1) edge (v2) edge (v3);
  \end{tikzpicture}
  \cdots 
  \begin{tikzpicture}
    \node[int] (i) at (0,.5) {};
    \node (v1) at (-.5,-.2) {$\omega$};
    \node (v2) at (0,-.2) {$\omega$};
    \node (v3) at (.5,-.2) {$\omega$};
  \draw (i) edge (v1) edge (v2) edge (v3);
  \end{tikzpicture}
\]
\[
    \Gamma^{(2)}_{ij}:=
  \begin{tikzpicture}
    \node (v) at (0,0) {$\omega$};
    \node (w) at (1,0) {$1$};
    \draw (v) edge (w);
  \end{tikzpicture}
  \cdots 
  \begin{tikzpicture}
    \node (v) at (0,0) {$\omega$};
    \node (w) at (1,0) {$n$};
    \draw (v) edge (w);
  \end{tikzpicture}
  \begin{tikzpicture}
    \node[int] (i) at (0,.5) {};
    \node (v1) at (-.5,-.2) {$\omega$};
    \node (v2) at (0,-.2) {$\omega$};
    \node (v3) at (.5,-.2) {$\omega$};
  \draw (i) edge (v1) edge (v2) edge (v3);
  \end{tikzpicture}
  \cdots 
  \begin{tikzpicture}
    \node[int] (i) at (0,.5) {};
    \node (v1) at (-.5,-.2) {$\omega$};
    \node (v2) at (0,-.2) {$\omega$};
    \node (v3) at (.5,-.2) {$\omega$};
  \draw (i) edge (v1) edge (v2) edge (v3);
  \end{tikzpicture}
  \begin{tikzpicture}
    \node[int] (i) at (0,.5) {};
    \node (v1) at (-.5,-.2) {$i$};
    \node (v2) at (0,-.2) {$j$};
    \node (v3) at (.5,-.2) {$\omega$};
  \draw (i) edge (v1) edge (v2) edge (v3);
  \end{tikzpicture}
\]
\[
  \Gamma^{(2)}_{\omega\epsilon\omega\epsilon}=
  \begin{tikzpicture}
    \node (v) at (0,0) {$\omega$};
    \node (w) at (1,0) {$1$};
    \draw (v) edge (w);
  \end{tikzpicture}
  \cdots 
  \begin{tikzpicture}
    \node (v) at (0,0) {$\omega$};
    \node (w) at (1,0) {$n$};
    \draw (v) edge (w);
  \end{tikzpicture}
  \begin{tikzpicture}
    \node[int] (i) at (0,.5) {};
    \node (v1) at (-.5,-.2) {$\omega$};
    \node (v2) at (0,-.2) {$\omega$};
    \node (v3) at (.5,-.2) {$\omega$};
  \draw (i) edge (v1) edge (v2) edge (v3);
  \end{tikzpicture}
  \cdots 
  \begin{tikzpicture}
    \node[int] (i) at (0,.5) {};
    \node (v1) at (-.5,-.2) {$\omega$};
    \node (v2) at (0,-.2) {$\omega$};
    \node (v3) at (.5,-.2) {$\omega$};
  \draw (i) edge (v1) edge (v2) edge (v3);
  \end{tikzpicture}
  \begin{tikzpicture}
    \node (v) at (0,0) {$\omega$};
    \node (w) at (1,0) {$\epsilon$};
    \draw (v) edge (w);
  \end{tikzpicture}
  \begin{tikzpicture}
    \node (v) at (0,0) {$\omega$};
    \node (w) at (1,0) {$\epsilon$};
    \draw (v) edge (w);
  \end{tikzpicture}
\]
\[
  \Gamma^{(2)}_{\omega\epsilon j}=
  \begin{tikzpicture}
    \node (v) at (0,0) {$\omega$};
    \node (w) at (1,0) {$1$};
    \draw (v) edge (w);
  \end{tikzpicture}
  \cdots 
  \begin{tikzpicture}
    \node (v) at (0,0) {$\omega$};
    \node (w) at (1,0) {$n$};
    \draw (v) edge (w);
  \end{tikzpicture}
  \begin{tikzpicture}
    \node[int] (i) at (0,.5) {};
    \node (v1) at (-.5,-.2) {$\omega$};
    \node (v2) at (0,-.2) {$\omega$};
    \node (v3) at (.5,-.2) {$\omega$};
  \draw (i) edge (v1) edge (v2) edge (v3);
  \end{tikzpicture}
  \cdots 
  \begin{tikzpicture}
    \node[int] (i) at (0,.5) {};
    \node (v1) at (-.5,-.2) {$\omega$};
    \node (v2) at (0,-.2) {$\omega$};
    \node (v3) at (.5,-.2) {$\omega$};
  \draw (i) edge (v1) edge (v2) edge (v3);
  \end{tikzpicture}
  \begin{tikzpicture}
    \node[int] (i) at (0,.5) {};
    \node (v1) at (-.5,-.2) {$j$};
    \node (v2) at (0,-.2) {$\omega$};
    \node (v3) at (.5,-.2) {$\omega$};
  \draw (i) edge (v1) edge (v2) edge (v3);
  \end{tikzpicture}
  \begin{tikzpicture}
    \node (v) at (0,0) {$\omega$};
    \node (w) at (1,0) {$\epsilon$};
    \draw (v) edge (w);
  \end{tikzpicture}
\]
\[
    \Gamma^{(2)}_{i;j}:=
  \begin{tikzpicture}
    \node (v) at (0,0) {$\omega$};
    \node (w) at (1,0) {$1$};
    \draw (v) edge (w);
  \end{tikzpicture}
  \cdots 
  \begin{tikzpicture}
    \node (v) at (0,0) {$\omega$};
    \node (w) at (1,0) {$n$};
    \draw (v) edge (w);
  \end{tikzpicture}
  \begin{tikzpicture}
    \node[int] (i) at (0,.5) {};
    \node (v1) at (-.5,-.2) {$\omega$};
    \node (v2) at (0,-.2) {$\omega$};
    \node (v3) at (.5,-.2) {$\omega$};
  \draw (i) edge (v1) edge (v2) edge (v3);
  \end{tikzpicture}
  \cdots 
  \begin{tikzpicture}
    \node[int] (i) at (0,.5) {};
    \node (v1) at (-.5,-.2) {$\omega$};
    \node (v2) at (0,-.2) {$\omega$};
    \node (v3) at (.5,-.2) {$\omega$};
  \draw (i) edge (v1) edge (v2) edge (v3);
  \end{tikzpicture}
  \begin{tikzpicture}
    \node[int] (i) at (0,.5) {};
    \node (v1) at (-.5,-.2) {$i$};
    \node (v2) at (0,-.2) {$\omega$};
    \node (v3) at (.5,-.2) {$\omega$};
  \draw (i) edge (v1) edge (v2) edge (v3);
  \end{tikzpicture}
  \begin{tikzpicture}
    \node[int] (i) at (0,.5) {};
    \node (v1) at (-.5,-.2) {$j$};
    \node (v2) at (0,-.2) {$\omega$};
    \node (v3) at (.5,-.2) {$\omega$};
  \draw (i) edge (v1) edge (v2) edge (v3);
  \end{tikzpicture}
\]
After killing terms involving trees with at least 4 leaves, the differential maps $\Gamma^{(2)}_\epsilon$, $\Gamma^{(2)}_{ij}$, and $\Gamma^{(2)}_{i;j}$ to $0$. On the remaining generators, it is given by:
\begin{align*}
\Gamma^{(0)} &\mapsto \sum_j \pm  \Gamma^{(2)}_{\epsilon j} + (const)\Gamma^{(2)}_\epsilon &
\Gamma^{(2)}_{\epsilon j} &\mapsto \sum_i \pm \Gamma^{(2)}_{ij} \\ 
\Gamma^{(2)}_{\omega\epsilon\omega\epsilon} &\mapsto \pm \Gamma^{(2)}_\epsilon + \sum_j \pm \Gamma^{(2)}_{\omega\epsilon j} &
\Gamma^{(2)}_{\omega\epsilon j}&\mapsto \sum_i \pm \Gamma^{(2)}_{i;j}. 
\end{align*}
Here one needs to take care that when $n = 0$ or $g = 1$, some of these generators are absent.  More precisely, the generators $\Gamma^{(2)}_{\epsilon j}$, $\Gamma^{(2)}_{\omega\epsilon j}$,$\Gamma^{(2)}_{ij}$, $\Gamma^{(2)}_{i;j}$ are not present when $n = 0$, nor are $\Gamma^{(2)}_\epsilon$, $\Gamma^{(2)}_{i;j}$, $\Gamma^{(2)}_{\omega\epsilon\omega\epsilon}$, $\Gamma^{(2)}_{\omega\epsilon j}$ when $g = 1$.  When all of the generators are present, the cohomology consists of one copy of the sign representation $V_{1^n}$ of $\bbS_n$, represented by $\Gamma^{(2)}_\epsilon+\cdots$, and two copies of the irreducible representation $V_{31^{n-3}}$, represented by linear combinations of graphs of the form $\Gamma^{(2)}_{ij}$ and $\Gamma^{(2)}_{i;j}$ respectively.
Taking into account the special cases $n=0$ and $g=1$, we arrive at the following: 
\begin{align*}
    H^k(B_{1,12}) &=
    \begin{cases}
        V_{31^{9}} & \text{for $k=12$} \\
        0 & \text{otherwise}
    \end{cases}
    &
    H^k(B_{3,9}) &=
    \begin{cases}
        V_{1^{9}} & \text{for $k=14$} \\
        V_{31^{6}}\oplus V_{31^{6}} & \text{for $k=15$} \\
        0 & \text{otherwise}
    \end{cases}
    \\
    H^k(B_{5,6}) &=
    \begin{cases}
        V_{1^6} & \text{for $k=17$} \\
        V_{31^{3}}\oplus V_{31^{3}} & \text{for $k=18$} \\
        0 & \text{otherwise}
    \end{cases}
    &
    H^k(B_{7,3}) &=
    \begin{cases}
        V_{1^3} & \text{for $k=20$} \\
        V_{3}\oplus V_{3} & \text{for $k=21$} \\
        0 & \text{otherwise}
    \end{cases}\\
    H^k(B_{9,0}) &=
    \begin{cases}
        \Q & \text{for $k=22$} \\
        0 & \text{otherwise.}
    \end{cases}
\end{align*}

\subsection{Excess 3}
Suppose $E(g,n) = 3$.  Then each generator $\Gamma$ for $B_{g,n}$ has excess 1 or 3, and the contributions of the blown up components determine a partition $\lambda$ of $E(\Gamma)$.  We filter $B_{g,n}$ according to the lexicographic ordering on these partitions, and consider the associated spectral sequence.  On the first page, the differential only relates generators with the same partition.
\begin{itemize}
\item
$\bf{\lambda = 1}$. The graphs of excess 1 and the complex that they generate on the first page are exactly as in \S\ref{sec:exc1}.   In particular, the cohomology is one copy of $V_{21^{n-2}}$ in degree $\frac32(g-4)+15$, represented by linear combinations of the graphs of type $\Gamma^{(1)}_j$.
\item
$\bf{\lambda = 1^3}$. 
The contribution of graphs with the blown-up components of excess 1 is similar to the excess 2 case computed in \S\ref{sec:exc2}. The resulting cohomology on the first page is one copy of $V_{41^{n-4}}$ in degree $\frac32(g-3)+17$ represented by linear combinations of graphs 
\[
    \Gamma^{(3)}_{i;j;k}:=
  \begin{tikzpicture}
    \node (v) at (0,0) {$\omega$};
    \node (w) at (1,0) {$1$};
    \draw (v) edge (w);
  \end{tikzpicture}
  \cdots 
  \begin{tikzpicture}
    \node (v) at (0,0) {$\omega$};
    \node (w) at (1,0) {$n$};
    \draw (v) edge (w);
  \end{tikzpicture}
  \begin{tikzpicture}
    \node[int] (i) at (0,.5) {};
    \node (v1) at (-.5,-.2) {$\omega$};
    \node (v2) at (0,-.2) {$\omega$};
    \node (v3) at (.5,-.2) {$\omega$};
  \draw (i) edge (v1) edge (v2) edge (v3);
  \end{tikzpicture}
  \cdots 
  \begin{tikzpicture}
    \node[int] (i) at (0,.5) {};
    \node (v1) at (-.5,-.2) {$\omega$};
    \node (v2) at (0,-.2) {$\omega$};
    \node (v3) at (.5,-.2) {$\omega$};
  \draw (i) edge (v1) edge (v2) edge (v3);
  \end{tikzpicture}
  \begin{tikzpicture}
    \node[int] (i) at (0,.5) {};
    \node (v1) at (-.5,-.2) {$i$};
    \node (v2) at (0,-.2) {$\omega$};
    \node (v3) at (.5,-.2) {$\omega$};
  \draw (i) edge (v1) edge (v2) edge (v3);
  \end{tikzpicture}
  \begin{tikzpicture}
    \node[int] (i) at (0,.5) {};
    \node (v1) at (-.5,-.2) {$j$};
    \node (v2) at (0,-.2) {$\omega$};
    \node (v3) at (.5,-.2) {$\omega$};
  \draw (i) edge (v1) edge (v2) edge (v3);
  \end{tikzpicture}
  \begin{tikzpicture}
    \node[int] (i) at (0,.5) {};
    \node (v1) at (-.5,-.2) {$k$};
    \node (v2) at (0,-.2) {$\omega$};
    \node (v3) at (.5,-.2) {$\omega$};
  \draw (i) edge (v1) edge (v2) edge (v3);
  \end{tikzpicture}
\]
\item  $\bf{\lambda = 21}$. Here we have 6 types of graphs to consider, coming from 3 types of components of excess 2 \eqref{equ:exc 2 trees} and 2 types of components of excess 1  \eqref{equ:exc 1 trees}.  
The cohomology consists of:
\begin{itemize}
\item A copy of $V_{41^{n-4}}\oplus V_{321^{n-5}}$ in degree 
$\frac32(g-4)+17$ represented by linear combinations of graphs 
\[
    \Gamma^{(3)}_{ij;k}:=
  \begin{tikzpicture}
    \node (v) at (0,0) {$\omega$};
    \node (w) at (1,0) {$1$};
    \draw (v) edge (w);
  \end{tikzpicture}
  \cdots 
  \begin{tikzpicture}
    \node (v) at (0,0) {$\omega$};
    \node (w) at (1,0) {$n$};
    \draw (v) edge (w);
  \end{tikzpicture}
  \begin{tikzpicture}
    \node[int] (i) at (0,.5) {};
    \node (v1) at (-.5,-.2) {$\omega$};
    \node (v2) at (0,-.2) {$\omega$};
    \node (v3) at (.5,-.2) {$\omega$};
  \draw (i) edge (v1) edge (v2) edge (v3);
  \end{tikzpicture}
  \cdots 
  \begin{tikzpicture}
    \node[int] (i) at (0,.5) {};
    \node (v1) at (-.5,-.2) {$\omega$};
    \node (v2) at (0,-.2) {$\omega$};
    \node (v3) at (.5,-.2) {$\omega$};
  \draw (i) edge (v1) edge (v2) edge (v3);
  \end{tikzpicture}
  \begin{tikzpicture}
    \node[int] (i) at (0,.5) {};
    \node (v1) at (-.5,-.2) {$i$};
    \node (v2) at (0,-.2) {$j$};
    \node (v3) at (.5,-.2) {$\omega$};
  \draw (i) edge (v1) edge (v2) edge (v3);
  \end{tikzpicture}
  \begin{tikzpicture}
    \node[int] (i) at (0,.5) {};
    \node (v1) at (-.5,-.2) {$k$};
    \node (v2) at (0,-.2) {$\omega$};
    \node (v3) at (.5,-.2) {$\omega$};
  \draw (i) edge (v1) edge (v2) edge (v3);
  \end{tikzpicture}
\]
\item Two copies of $V_{21^{n-2}}$ in degree $\frac32(g-4)+16$ one represented by graphs of the form
\[
  \Gamma^{(3)}_{\epsilon; j}=
  \begin{tikzpicture}
    \node (v) at (0,0) {$\omega$};
    \node (w) at (1,0) {$1$};
    \draw (v) edge (w);
  \end{tikzpicture}
  \cdots 
  \begin{tikzpicture}
    \node (v) at (0,0) {$\omega$};
    \node (w) at (1,0) {$n$};
    \draw (v) edge (w);
  \end{tikzpicture}
  \begin{tikzpicture}
    \node[int] (i) at (0,.5) {};
    \node (v1) at (-.5,-.2) {$\omega$};
    \node (v2) at (0,-.2) {$\omega$};
    \node (v3) at (.5,-.2) {$\omega$};
  \draw (i) edge (v1) edge (v2) edge (v3);
  \end{tikzpicture}
  \cdots 
  \begin{tikzpicture}
    \node[int] (i) at (0,.5) {};
    \node (v1) at (-.5,-.2) {$\omega$};
    \node (v2) at (0,-.2) {$\omega$};
    \node (v3) at (.5,-.2) {$\omega$};
  \draw (i) edge (v1) edge (v2) edge (v3);
  \end{tikzpicture}
  \begin{tikzpicture}
    \node[int] (i) at (0,.5) {};
    \node (v1) at (-.5,-.2) {$j$};
    \node (v2) at (0,-.2) {$\omega$};
    \node (v3) at (.5,-.2) {$\omega$};
  \draw (i) edge (v1) edge (v2) edge (v3);
  \end{tikzpicture}
  \begin{tikzpicture}
    \node[int] (i) at (0,.5) {};
    \node (v1) at (-.5,-.2) {$\epsilon$};
    \node (v2) at (0,-.2) {$\omega$};
    \node (v3) at (.5,-.2) {$\omega$};
  \draw (i) edge (v1) edge (v2) edge (v3);
  \end{tikzpicture}
\]
and another by graphs of the form:
\[
  \Gamma^{(3)}_{\epsilon i;j}=
  \begin{tikzpicture}
    \node (v) at (0,0) {$\omega$};
    \node (w) at (1,0) {$1$};
    \draw (v) edge (w);
  \end{tikzpicture}
  \cdots 
  \begin{tikzpicture}
    \node (v) at (0,0) {$\omega$};
    \node (w) at (1,0) {$n$};
    \draw (v) edge (w);
  \end{tikzpicture}
  \begin{tikzpicture}
    \node[int] (i) at (0,.5) {};
    \node (v1) at (-.5,-.2) {$\omega$};
    \node (v2) at (0,-.2) {$\omega$};
    \node (v3) at (.5,-.2) {$\omega$};
  \draw (i) edge (v1) edge (v2) edge (v3);
  \end{tikzpicture}
  \cdots 
  \begin{tikzpicture}
    \node[int] (i) at (0,.5) {};
    \node (v1) at (-.5,-.2) {$\omega$};
    \node (v2) at (0,-.2) {$\omega$};
    \node (v3) at (.5,-.2) {$\omega$};
  \draw (i) edge (v1) edge (v2) edge (v3);
  \end{tikzpicture}
  \begin{tikzpicture}
    \node[int] (i) at (0,.5) {};
    \node (v1) at (-.5,-.2) {$j$};
    \node (v2) at (0,-.2) {$\omega$};
    \node (v3) at (.5,-.2) {$\omega$};
  \draw (i) edge (v1) edge (v2) edge (v3);
  \end{tikzpicture}
  \begin{tikzpicture}
    \node (v) at (0,0) {$i$};
    \node (w) at (1,0) {$\epsilon$};
    \draw (v) edge (w);
  \end{tikzpicture}
\]
\end{itemize}
\item $\bf{\lambda = 3}$.
The relevant trees of excess 3 are:
\begin{equation*}
    \begin{tikzpicture}
      \node (v) at (0,0) {$\epsilon$};
      \node (w) at (1,0) {$\epsilon$};
      \draw (v) edge (w);
    \end{tikzpicture}
  \quad\text{or}\quad  
  \begin{tikzpicture}
    \node[int] (i) at (0,.5) {};
    \node (v1) at (-.5,-.2) {$\epsilon$};
    \node (v2) at (0,-.2) {$i$};
    \node (v3) at (.5,-.2) {$\omega$};
  \draw (i) edge (v1) edge (v2) edge (v3);
  \end{tikzpicture}
    \quad\text{or}\quad 
    \begin{tikzpicture}
    \node[int] (i) at (0,.5) {};
    \node[int] (j) at (1,.5) {};
    \node (v1) at (-.5,-.2) {$i$};
    \node (v2) at (0,-.2) {$\omega$};
    \node (v3) at (1,-.2) {$\omega$};
    \node (v4) at (1.5,-.2) {$j$};
  \draw (i) edge (v1) edge (v2) edge (j) (j) edge (v3) edge (v4);
  \end{tikzpicture}
\quad  .
  \end{equation*}
  The cohomology of the resulting 3-term complex is $V_{31^{n-3}}$ in degree $\frac32(g-4)+17$ represented by linear combinations of graphs of the form 
  \[
  \Gamma^{(3)}_{i\cdot j}=
  \begin{tikzpicture}
    \node (v) at (0,0) {$\omega$};
    \node (w) at (1,0) {$1$};
    \draw (v) edge (w);
  \end{tikzpicture}
  \cdots 
  \begin{tikzpicture}
    \node (v) at (0,0) {$\omega$};
    \node (w) at (1,0) {$n$};
    \draw (v) edge (w);
  \end{tikzpicture}
  \begin{tikzpicture}
    \node[int] (i) at (0,.5) {};
    \node (v1) at (-.5,-.2) {$\omega$};
    \node (v2) at (0,-.2) {$\omega$};
    \node (v3) at (.5,-.2) {$\omega$};
  \draw (i) edge (v1) edge (v2) edge (v3);
  \end{tikzpicture}
  \cdots 
  \begin{tikzpicture}
    \node[int] (i) at (0,.5) {};
    \node (v1) at (-.5,-.2) {$\omega$};
    \node (v2) at (0,-.2) {$\omega$};
    \node (v3) at (.5,-.2) {$\omega$};
  \draw (i) edge (v1) edge (v2) edge (v3);
  \end{tikzpicture}
  \begin{tikzpicture}
    \node[int] (i) at (0,.5) {};
    \node[int] (j) at (1,.5) {};
    \node (v1) at (-.5,-.2) {$i$};
    \node (v2) at (0,-.2) {$\omega$};
    \node (v3) at (1,-.2) {$\omega$};
    \node (v4) at (1.5,-.2) {$j$};
  \draw (i) edge (v1) edge (v2) edge (j) (j) edge (v3) edge (v4);
  \end{tikzpicture}
\]
\end{itemize}

The only possible cancellations on later pages of the spectral sequence are between the copies of $V_{21^{n-2}}$. And indeed $\delta_\omega \Gamma^{(1)}_i=\sum_j\pm \Gamma^{(3)}_{\epsilon i;j}+(\cdots)$, and thus the two corresponding copies of $V_{21^{n-2}}$ do cancel on the second page of the spectral sequence.  Taking into account the cases of low $g\leq 2$ and low $n\leq 3$ one arrives at the following expressions for the cohomology of $B_{g,n}$:
\begin{align*}
    H^k(B_{2,11}) &=
    \begin{cases}
        V_{41^{7}}\oplus V_{321^{6}}\oplus V_{31^{8}} & \text{for $k=14$} \\
        0 & \text{otherwise}
    \end{cases}
    \\
    H^k(B_{4,8}) &=
    \begin{cases}
        V_{21^{6}} & \text{for $k=16$} \\
        V_{41^{4}}\oplus V_{41^{4}}\oplus V_{321^{3}}\oplus V_{31^{5}} & \text{for $k=17$} \\
        0 & \text{otherwise}
    \end{cases}
    \\
    H^k(B_{6,5}) &=
    \begin{cases}
        V_{21^{3}} & \text{for $k=19$} \\
        V_{41}\oplus V_{41}\oplus V_{32}\oplus V_{31^{2}} & \text{for $k=20$} \\
        0 & \text{otherwise}
    \end{cases}
    \\
    H^k(B_{8,2}) &=
    \begin{cases}
        V_{2} & \text{for $k=22$} \\
        0 & \text{otherwise.}
    \end{cases}
\end{align*}

\section{The case $n=0$: first injection} \label{sec:first inj}
We now restrict attention to the special case where $n = 0$, allowing the genus $g$ (and hence the excess $E$) to be arbitrarily large.  Following the standard notational convention for moduli of curves, we write: 
\begin{align*}
X_g &:= X_{g,0},
&
B_g &:= B_{g,0},
&
C_g &:= C_{g,0}.
\end{align*}
In this section and the next, we identify several nontrivial families of cohomology classes in $C_g$ and $B_g$, respectively, built from the weight 0 compactly supported cohomology of $\M_{h}$ for $h<g$.

\medskip

Let $\GC_0$ be the graph complex generated by connected graphs without tadpoles in which every vertex has valence at least 3.  Each generator comes with an orientation, which is a total ordering of the edges.  As before, we identify isomorphic graphs, and we identify two orientations up to sign, cf. \eqref{equ:bGK gen rel 1} and \eqref{equ:bGK gen rel 2}, and  \eqref{equ:X gen rel 2} and \eqref{equ:X gen rel 1}.
The cohomological degree of a generator is the number of edges, and the differential $\delta$ is given by vertex splitting, as in \eqref{equ:delta bullet def}.  The differential preserves the loop order (first Betti number) and we write $\GC_0^{(g)}$ for the part of loop order $g$.

Recall that $\GC_0^{(g)}$ is quasi-isomorphic to the Feynman transform of the modular co-operad $H^0(\MM)$, evaluated at $(g,0)$; more precisely, it is the quotient of $\cF H^0(\MM)(g,0)$ by the acyclic subcomplex generated by graphs with tadpoles or vertices of positive genus \cite{CGP1}.  As a consequence, there is a canonical isomorphism
\[
W_0H^\bullet_c(\M_{g}) \cong H^\bullet(\GC_0^{(g)}).
\]
\noindent The symmetric product 
$
\Sym^k(\GC_0)
$
inherits a loop order grading from $\GC_0$ and we denote by $\Sym^k(\GC_0)^{(g)}\subset \Sym^k(\GC_0)$ the part of loop order $g$.

\begin{thm}\label{thm:emb1}
There is a map of complexes
\[
F \colon \Sym^{10}(\GC_0)^{(g-1)}[-21] \oplus \Sym^{10}(\GC_0)^{(g-2)}[-22] \to C_{g}
\]
that induces an injective map on the level of cohomology
\[
H^{k-21}(\Sym^{10}(\GC_0)^{(g-1)}) \oplus H^{k-22}(\Sym^{10}(\GC_0)^{(g-2)})
\to H^k(C_g).
\]
\end{thm}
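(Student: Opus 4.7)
The plan is to define $F$ explicitly by attaching extra legs to tuples of $\GC_0$-graphs, then to verify the chain map property through a careful sign analysis, and finally to establish injectivity via a projection followed by a filtration argument.

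On a generator $\Gamma_1 \cdots \Gamma_{10} \in \Sym^{10}(\GC_0)^{(g-1)}$, I would define $F$ on the first summand by summing over tuples of vertices $(v_1, \ldots, v_{10})$ with $v_i \in V(\Gamma_i)$ of the blown-up graph consisting of $10$ disjoint components, the $i$-th being $\Gamma_i$ with a new $\omega$-leg attached directly at $v_i$, with orientation obtained by appending the new structural edges and marked half-edges to the chosen edge orderings of the $\Gamma_i$. For the second summand, I would take the analogous sum and disjoint-union an additional blown-up component of genus $1$ that carries no further $\omega$-leg (most naturally the single edge with both endpoints labeled $\epsilon$, since the $\omega$-$\omega$ edge vanishes by the odd symmetry swapping its two marked half-edges). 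A direct tally of structural edges, marked half-edges, and genera shows that the image lies in $\widetilde C_g$ with at most $10$ $\omega$-legs, total genus $g$, and that the degree shifts $[-21]$ and $[-22]$ are correct.

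The bulk of the argument is verifying $\delta_{C_g} F = F d$. Writing $\delta_{C_g} = \delta_s^\bullet + \delta_s^\circ + \delta_\omega$, the term $\delta_s^\bullet$, which splits non-special vertices, matches the $\GC_0$-differential on the factors: splits at vertices away from the $\omega$-attachment directly reproduce $d_{\GC_0}(\Gamma_i)$, while splits at the attachment vertex recombine across the sum over attachment points to fill in the remaining terms from distributing the $\omega$-half-edge to either new vertex. The operator $\delta_s^\circ$, which merges subsets of $\omega/\epsilon$-legs at the special vertex, is severely restricted on the image of $F$ because the merged subset $B$ must contain at most one $\omega$-leg, while all half-edges at the special vertex in $F_1$ are $\omega$-labeled. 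Finally $\delta_\omega$, which converts an $\omega$ into an $\epsilon$, is the most delicate piece; making its contribution vanish (or equivalently cancel against the surviving $\delta_s^\circ$ terms in $F_2$) may require augmenting the raw construction with correction terms, which can be pinned down by requiring the chain-map identity.

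For injectivity on cohomology, I plan to construct a projection $\Pi\colon C_g \twoheadrightarrow \Sym^{10}(\GC_0)^{(g-1)}[-21] \oplus \Sym^{10}(\GC_0)^{(g-2)}[-22]$ reading off the combinatorial form of generators in the image of $F$ and vanishing on all others, so that $\Pi \circ F = \mathrm{id}$ by construction. Since $\Pi$ is not a chain map on $C_g$, I would filter $C_g$ by a complexity statistic, for instance the partition of the total excess into contributions from each blown-up component as in Section~\ref{sec:low excess}, so that on the first page of the induced spectral sequence $\Pi$ becomes a chain map and $F$ identifies with the inclusion of a direct summand. Injectivity on the $E_1$-page then implies injectivity on cohomology via the spectral sequence comparison lemma. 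The main obstacle will be the precise sign bookkeeping in the chain-map check, especially the coordination between $\delta_s^\circ$ and $\delta_\omega$, whose interaction produces graphs of shapes that do not appear directly in the image of $F$; a related subtlety is choosing the filtration so that $F$ identifies cleanly with the inclusion of a direct summand on the associated graded.
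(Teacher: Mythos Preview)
Your leading terms are the right ones, but the proposal underestimates what is needed to promote them to an actual chain map. The naive map $F_1(\gamma_1\cdots\gamma_{10})=\gamma_1^\omega\cup\cdots\cup\gamma_{10}^\omega$ is not closed: applying $\delta_\omega$ produces a graph with one $\epsilon$-leg, and there is nothing in your image to cancel it. In the paper the correction terms are not ad hoc but come from a specific algebraic structure: one defines a map $G$ out of $(\Sym(\GC_0),\delta+\delta_{\{,\}})$, where $\{,\}$ is the degree $+1$ Lie bracket that joins two graphs by an edge, and the explicit formula for $G$ is a sum over all ordered subsets $J$ and permutations, replacing $\gamma_j^\omega$ by $\gamma_j^1\circ(-)$ for $j\in J$. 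This $G$ intertwines $\delta_s+\delta_\omega$ with $\delta+\delta_{\{,\}}$, not with $\delta$ alone. To obtain a chain map out of the abelian complex $(\Sym(\GC_0),\delta)$ one must then precompose with an $L_\infty$-isomorphism $\Phi$ trivializing $\{,\}$, whose existence is a nontrivial result from \cite{Willwachertrivial}. Your phrase ``correction terms \ldots\ can be pinned down by requiring the chain-map identity'' hides this entire layer; without the $L_\infty$ input there is no reason the obstruction sequence closes up. Similarly, your $F_2$ is only the leading piece of the paper's construction: the extra genus is added via an operator $\nabla$ that sums over \emph{all} ways of attaching a new edge (including attaching it to internal vertices, not only the $\epsilon$--$\epsilon$ edge), and one must check that $\nabla$ anticommutes with the differential.

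The injectivity step also has a gap. The paper does project to the subspace $C_{g,10\omega}$ of graphs with exactly ten $\omega$-legs, as you suggest, but the target cohomology $H(C_{g,10\omega},\delta_s)$ is not computed by an elementary filtration of $C_g$: it is identified with a direct sum of four copies of hairy graph cohomology $H(\fHGC)$ via a quasi-isomorphism $K$ (Proposition~\ref{prop:map K}), whose proof imports the main result of \cite{TWspherical}. Injectivity of $\pi\circ F$ then reduces to the injectivity of the ``attach one hair to each factor'' map $\iota\colon \Sym^{10}(\GC_0)\to \fHGC$ on cohomology, which is Theorem~1 of \cite{TWcommutative}. Your plan to make $\Pi$ a chain map on an associated graded and invoke spectral sequence comparison does not supply these computations; in particular, there is no a priori reason the associated graded of $C_g$ under an excess-type filtration splits off $\Sym^{10}(\GC_0)$ as a direct summand in cohomology without the hairy-graph input.
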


\begin{corollary}
There is a natural injection 
\[
\big(H^{k-21}(\Sym^{10}(\GC_0)^{(g-1)}) \oplus H^{k-22}(\Sym^{10}(\GC_0)^{(g-2)})\big) \otimes \Delta 
\to \gr_{11}H^k(\M_g).
\]
\end{corollary}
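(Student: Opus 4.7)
The plan is to combine Theorem~\ref{thm:emb1} with the identification
\[
H^\bullet(C_g)\otimes \Delta \;\cong\; \gr_{11}H^\bullet_c(\M_g)
\]
obtained by specializing the corollary following Proposition~\ref{prop:Bgn intro} to the case $n=0$. Given this dictionary, the corollary will follow formally, with no further graph-combinatorial input beyond what Theorem~\ref{thm:emb1} supplies.

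First, I would start from the injection of cohomology groups provided by Theorem~\ref{thm:emb1}:
\[
H^{k-21}(\Sym^{10}(\GC_0)^{(g-1)}) \oplus H^{k-22}(\Sym^{10}(\GC_0)^{(g-2)})\;\hookrightarrow\; H^k(C_g).
\]
Since $\Delta$ is a (nonzero, in fact $2$-dimensional) $\Q$-vector space and $\Q$ is a field, the functor $-\otimes_\Q \Delta$ on $\Q$-vector spaces is exact. Applying it to the map above preserves injectivity and yields
\[
\big(H^{k-21}(\Sym^{10}(\GC_0)^{(g-1)}) \oplus H^{k-22}(\Sym^{10}(\GC_0)^{(g-2)})\big)\otimes \Delta \;\hookrightarrow\; H^k(C_g)\otimes \Delta.
\]

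Next, I would compose with the isomorphism $H^k(C_g)\otimes \Delta \cong \gr_{11}H^k_c(\M_g)$ from the corollary to Proposition~\ref{prop:Bgn intro}. This gives the desired injection into $\gr_{11}H^k_c(\M_g)$ (correcting the evident typo in the statement, where $H^k(\M_g)$ should be $H^k_c(\M_g)$). Naturality of the resulting injection, including compatibility with the Hodge structure and $\ell$-adic Galois action, comes for free from the naturality of each step in the zig-zag established in \S3, since the decoration by $\Delta$ on the graph complex side is exactly what encodes the weight $11$ Hodge/Galois datum on the Getzler--Kapranov side.

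There is essentially no obstacle here beyond invoking the two cited results; all the genuine work has already been done in Theorem~\ref{thm:emb1} (the construction and injectivity of $F$ on cohomology) and in Proposition~\ref{prop:Bgn intro} (the identification of $\gr_{11}H^\bullet_c(\M_g)$ with $H^\bullet(C_g)\otimes \Delta$). Thus the proof is just a two-line assembly: tensor with $\Delta$, then transport along the isomorphism.
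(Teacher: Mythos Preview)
Your proposal is correct and matches the paper's intended argument; the paper does not even supply a separate proof, treating the corollary as immediate from Theorem~\ref{thm:emb1} together with the isomorphism $H^k(C_{g,n})\otimes\Delta\cong \gr_{11}H^k_c(\M_{g,n})$ stated right after Proposition~\ref{prop:Bgn intro}. Your observation about the typo $H^k(\M_g)$ versus $H^k_c(\M_g)$ is also correct.
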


\subsection{A homotopy trivial Lie bracket on $\GC_0$}
The graph complex $\GC_0$ is a dg Lie algebra with the Lie bracket $[-,-]$ defined by inserting one graph into a vertex of the other. This primary Lie bracket has degree $0$; it leaves the number of edges invariant and removes one vertex.

There is also a secondary Lie bracket of degree $+1$ on $\GC_0$ which, we denote $\{-,-\}$, defined by gluing two graphs together by attaching a new edge between them:
\[
\{\gamma_1,\gamma_2\} := \sum_{v\in V\gamma_1 \atop w\in V\gamma_2} (v,w) \cup \gamma_1 \cup \gamma_2 
= 
\sum 
\begin{tikzpicture}
    \node[ext] (v) at (0,0) {$\gamma_1$};
    \node[ext] (w) at (1,0) {$\gamma_2$};
    \draw (v) edge (w);
\end{tikzpicture}
\]
To fix the sign, the newly added edge comes first in the ordering, followed by the edges of $\gamma_1$, and then those of $\gamma_2$.  This secondary Lie bracket $\{-,-\}$ is homotopy trivial.
\begin{prop}[\cite{Willwachertrivial}]\label{prop:pois trivial}
There is an $L_\infty$-isomorphism 
\[
(\GC_0, \delta, 0) \to (\GC_0, \delta, \{-,-\})
\]
between the abelian Lie algebra $\GC_0$ and the dg Lie algebra $\GC_0$ equipped with the Lie bracket $\{-,-\}$.
This $L_\infty$-isomorphism preserves the grading by loop order on both sides. 
\end{prop}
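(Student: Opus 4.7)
The plan is to construct the $L_\infty$-isomorphism $f = (f_n)_{n\geq 1}$ explicitly, with $f_1 = \mathrm{id}$, using natural gluing operations on graphs. A natural candidate for the second-order component is
\[
f_2(\gamma_1, \gamma_2) \;=\; \sum_{v_1 \in V\gamma_1,\, v_2 \in V\gamma_2} \gamma_1 \vee_{v_1, v_2} \gamma_2,
\]
summing over identifications of a vertex $v_1$ of $\gamma_1$ with a vertex $v_2$ of $\gamma_2$. Since the merged vertex has valence $\geq 6$, the result lies in $\GC_0$. Applying $\delta$, the splittings of vertices internal to $\gamma_1$ or $\gamma_2$ produce $f_2(\delta \gamma_1, \gamma_2) \pm f_2(\gamma_1, \delta\gamma_2)$, and the splittings of the identified vertex that cleanly separate the half-edges of $\gamma_1$ from those of $\gamma_2$ contribute exactly $\{\gamma_1, \gamma_2\}$. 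This identifies $f_2$ as a chain-level primitive for the secondary bracket, up to "mixed" splitting terms to be handled by the higher components.

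The higher components $f_n$ are constructed by summing over trees: given graphs $\gamma_1, \ldots, \gamma_n$, define $f_n(\gamma_1, \ldots, \gamma_n)$ as a sum over labeled trees $T$ on $n$ vertices, together with choices of vertex $v_i \in V\gamma_i$, of the graph obtained by iteratively identifying the selected vertices along the edges of $T$. The main obstacle I foresee lies in the mixed splittings of identified vertices in the differentials of the $f_n$, where half-edges originally belonging to different $\gamma_i$ get redistributed among the new vertices. Verifying the $L_\infty$-coherence relations reduces to a combinatorial identity on labeled trees with weighted vertex distributions, and the sign bookkeeping is expected to be the most delicate part of the argument.

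A cleaner alternative, which I would pursue if the direct combinatorial approach becomes unwieldy, is to frame the trivialization operadically. Recall that $\GC_0$ arises as (a resolution of) the Feynman transform of the modular cooperad $H^0(\MM)$; thus $\{-,-\}$ should be realized as an infinitesimal deformation induced from the modular structure on $H^0(\MM)$, and the homotopy formality of this cooperad furnishes the required gauge trivialization via standard obstruction theory on the deformation complex. In either approach, the preservation of loop order is automatic: both vertex identification (decreasing $|V|$ and the number of components $c$ by one each) and vertex splitting (increasing both by one) leave $|E| - |V| + c$ invariant, so each component $f_n$ maps $\bigotimes_i \GC_0^{(g_i)}$ into $\GC_0^{(g_1 + \cdots + g_n)}$, as required.
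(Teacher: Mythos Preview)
The paper does not give its own proof here; the result is quoted from \cite{Willwachertrivial}. It does, however, record the key identity later as \eqref{eq:preLie eqn}: for the pre-Lie \emph{insertion} product $\bullet$ on $\GC_0$ (replace a vertex of $\gamma_1$ by $\gamma_2$ and sum over all ways of reconnecting the incident half-edges to vertices of $\gamma_2$) one has
\[
\delta(\gamma_1\bullet\gamma_2) - (\delta\gamma_1)\bullet\gamma_2 - (-1)^{|\gamma_1|}\gamma_1\bullet(\delta\gamma_2) \;=\; \{\gamma_1,\gamma_2\}.
\]
This is exactly the order-$2$ condition for an $L_\infty$-morphism with $f_1=\mathrm{id}$, and it is the starting point of the construction in the cited reference; the higher $f_n$ there are built from iterated insertions, not vertex identifications.

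Your candidate $f_2$ (vertex identification) does not satisfy this condition, and the defect cannot be postponed. For an $L_\infty$-morphism from an abelian source to a dg Lie target, the order-$2$ relation reads $[\delta, f_2]=\{-,-\}$ on the nose; the components $f_n$ with $n\geq 3$ first enter the relations at order $\geq 3$ and cannot absorb a level-$2$ error. Concretely, when $\delta$ splits the merged vertex of $\gamma_1 \vee_{v_1,v_2}\gamma_2$, the fully mixed splittings---those in which each of the two new vertices inherits half-edges from both $\gamma_1$ and $\gamma_2$---appear in $\delta f_2$ but are matched by nothing in $f_2(\delta\gamma_1,\gamma_2)$ or $f_2(\gamma_1,\delta\gamma_2)$, since in the latter one of the two split pieces always carries half-edges from only one $\gamma_i$. (Take $\gamma_1=\gamma_2=K_4$: then $\delta K_4=0$, so $[\delta,f_2](K_4,K_4)=\delta f_2(K_4,K_4)$, and the discrepancy from $\{K_4,K_4\}$ consists of all the non-clean splittings of the valence-$6$ merged vertex.) What makes the pre-Lie insertion work is precisely the sum over all reconnections, which supplies the terms that cancel the mixed splits. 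Your tree formula for $f_n$ and the operadic alternative would both need to be rebuilt over this corrected $f_2$; the loop-order remark at the end is fine and applies equally well to $\bullet$.
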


The Chevalley-Eilenberg complex of the dg Lie algebra $\GC_0$ with the homotopy trivial Lie bracket $\{-,-\}$ of degree +1 is the graded vector space
\[
  CE(\GC_0,  \delta, \{-,-\}) = \Sym(\GC_0)
\]
with the differential $\delta+\delta_{\{,\}}$ such that
\[
\delta_{\{,\} } (\gamma_1\cdots \gamma_k)
=
\sum_{i<j}
(-1)^{|\gamma_i|(|\gamma_1|+\cdots +|\gamma_{i-1}|) + |\gamma_j|(|\gamma_1|+\cdots +|\gamma_{j-1}|) -|\gamma_i||\gamma_j|} 
\{\gamma_i,\gamma_j\} 
\gamma_1\cdots \hat \gamma_i \cdots \hat \gamma_j \cdots \gamma_k.
\]
Combinatorially, if we think of the product $\gamma_1\cdots \gamma_k$ as the union of the graphs $\gamma_1,\dots, \gamma_k$, then $\delta_{\{,\} }$ adds a new edge between any pair of vertices that belong to different connected components.
Any $L_\infty$-morphism induces a morphism on the Chevalley-Eilenberg complexes. Hence we obtain from Proposition \ref{prop:pois trivial} the following result.

\begin{cor}\label{cor:CE map}
    There is an isomorphism between the Chevalley-Eilenberg complexes of the abelian dg Lie algebra $\GC_0$ and the dg Lie algebra $\GC_0$ with the bracket $\{-,-\}$.
\[
\Phi\colon CE(\GC_0, \delta, 0) = (\Sym(\GC_0), \delta) \to CE(\GC_0, \delta, \{-,-\})
=
(\Sym(\GC_0), \delta + \delta_{\{,\}}).
\]
\end{cor}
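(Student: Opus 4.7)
The plan is to apply the Chevalley--Eilenberg functor to the $L_\infty$-isomorphism provided by Proposition~\ref{prop:pois trivial}, using the standard fact that $CE$ sends $L_\infty$-morphisms to morphisms of dg cocommutative coalgebras, and sends $L_\infty$-isomorphisms (those whose linear Taylor coefficient is a linear isomorphism) to isomorphisms of such coalgebras.

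Concretely, I would write the $L_\infty$-morphism of Proposition~\ref{prop:pois trivial} as a sequence of Taylor coefficients
\[
F_k \colon \Sym^k(\GC_0) \to \GC_0, \qquad k \geq 1,
\]
of appropriate degree, with $F_1$ the identity (or at least a linear isomorphism) and satisfying the usual $L_\infty$-morphism relations with respect to the differential $\delta$ on the source and the differential plus bracket $\{-,-\}$ on the target. I would then define $\Phi$ as the unique coalgebra map on $\Sym(\GC_0)$ whose projection to the cogenerators $\GC_0$ is $\sum_k F_k$; explicitly,
\[
\Phi(\gamma_1 \cdots \gamma_n) = \sum \pm \, F_{k_1}(\gamma_{I_1}) \cdots F_{k_r}(\gamma_{I_r}),
\]
where the sum is over unordered partitions $\{1,\dots,n\} = I_1 \sqcup \cdots \sqcup I_r$, with Koszul signs arising from reordering the factors $\gamma_i$.

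I would then verify two things. First, that $\Phi$ intertwines the differentials: this is exactly the translation of the $L_\infty$-morphism relations into the coalgebra-map-of-bar-constructions language, so it is formal once the definitions on each side are unpacked. On the source side the relevant differential is just the linear extension of $\delta$ to $\Sym(\GC_0)$ (there is no bracket contribution since the source Lie bracket is zero); on the target side one gets $\delta + \delta_{\{,\}}$ with $\delta_{\{,\}}$ as defined above. Second, that $\Phi$ is an isomorphism: since $F_1$ is an isomorphism, one inverts $\Phi$ order by order in the length of symmetric products, as is standard for coalgebra morphisms on cofree cocommutative coalgebras.

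Finally, I would observe that because every $F_k$ preserves loop order (by the last sentence of Proposition~\ref{prop:pois trivial}), the induced map $\Phi$ preserves the loop order grading on $\Sym(\GC_0)$. There is no real obstacle here beyond bookkeeping of signs and degree shifts in the bar/cobar construction; the content of the corollary is entirely contained in Proposition~\ref{prop:pois trivial}, and the present statement is the formal Chevalley--Eilenberg image of that result.
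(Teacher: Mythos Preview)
Your proposal is correct and follows exactly the paper's approach: the paper derives the corollary in one sentence, noting that any $L_\infty$-morphism induces a morphism on Chevalley--Eilenberg complexes, so the result follows immediately from Proposition~\ref{prop:pois trivial}. You have simply unpacked this standard passage from $L_\infty$-morphisms to coalgebra maps on the bar construction in more detail than the paper does.
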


\subsection{Some combinatorial operations}
We consider the tadpole-free version of the Kontsevich graphical operad $\Graphs_0$ \cite{KMotives}.
As for any operad, its unary operations $G_1:=\Graphs_0(1)$ form a dg associative algebra with the operadic composition $\circ$ as the product.
Concretely, elements of $G_1$ are linear combinations of pairs $(\Gamma,o)$ with $\Gamma$ a connected graph with one special ``external" vertex, and an orientation $o=e_1\wedge \cdots \wedge e_k$ given by ordering the edges.
\[
\begin{tikzpicture}
\node[int] (v1) at (0,0) {};
\node[int] (v2) at (1,0) {};
\node[int] (v3) at (0,1) {};
\node[int] (v4) at (1,1) {};
\node[ext] (e) at (0.5,-.7) {};
\draw (v1) edge (v2) edge (v3) edge (v4) edge (e)
(v2) edge (v3) edge (v4) edge (e)
(v3) edge (v4) edge (e) 
(v4) edge (e);
\end{tikzpicture} 
\]
The external vertex may have any valence; all other vertices must have valence at least $3$. 

The product (i.e., the operadic composition) is defined by inserting one graph into the other and summing over all ways of reconnecting the incident edges to vertices of $\Gamma_2$.
\[
\begin{tikzpicture}
\node[ext] (v) at (0,.5) {$\Gamma_1$};
\node[ext] (w) at (0,-.5) {};
\draw (v) edge (w) (v.west) edge (w) (v.east) edge (w);
\end{tikzpicture}
\circ 
\begin{tikzpicture}
    \node[ext] (v) at (0,.5) {$\Gamma_2$};
    \node[ext] (w) at (0,-.5) {};
    \draw (v) edge (w) (v.west) edge (w) (v.east) edge (w);
\end{tikzpicture}
=
\sum 
\begin{tikzpicture}
    \node[ext] (v) at (0,.8) {$\Gamma_1$};
    \node[ext, dotted, minimum size=10mm] (w) at (0,-.5) {};
    \draw (v) edge (w) (v.west) edge (w) (v.east) edge (w);
    \begin{scope} [yshift=-.6cm, scale=.4]
        \node[ext] (v) at (0,.5) {$\scriptstyle \Gamma_2$};
        \node[ext] (w) at (0,-.5) {};
        \draw (v) edge (w) (v.west) edge (w) (v.east) edge (w);
        \node[ext, fill=white] (v) at (0,.5) {$\scriptstyle \Gamma_2$};
    \end{scope}
\end{tikzpicture}
\]
The differential $\delta$ on $G_1$ is given by splitting vertices of graphs, as in  \eqref{equ:delta bullet def}, \eqref{equ:delta circ def}.
  \begin{align*}
    \delta:
    \begin{tikzpicture}[baseline=-.65ex]
        \node[int] (v) at (0,0) {};
        \draw (v) edge +(-.3,-.3)  edge +(-.3,0) edge +(-.3,.3) edge +(.3,-.3)  edge +(.3,0) edge +(.3,.3);
        \end{tikzpicture}
        &\mapsto\sum
        \begin{tikzpicture}[baseline=-.65ex]
        \node[int] (v) at (0,0) {};
        \node[int] (w) at (0.5,0) {};
        \draw (v) edge (w) (v) edge +(-.3,-.3)  edge +(-.3,0) edge +(-.3,.3)
         (w) edge +(.3,-.3)  edge +(.3,0) edge +(.3,.3);
        \end{tikzpicture}   
        &
        \begin{tikzpicture}[baseline=-.65ex]
          \node[ext] (v) at (0,0) {};
          \draw (v) edge +(-.3,-.3)  edge +(-.3,0) edge +(-.3,.3) edge +(.3,-.3)  edge +(.3,0) edge +(.3,.3);
          \end{tikzpicture}
          &\mapsto\sum
          \begin{tikzpicture}[baseline=-.65ex]
          \node[ext] (v) at (0,0) {};
          \node[int] (w) at (0.5,0) {};
          \draw (v) edge (w) (v) edge +(-.3,-.3)  edge +(-.3,0) edge +(-.3,.3)
           (w) edge +(.3,-.3)  edge +(.3,0) edge +(.3,.3);
          \end{tikzpicture}         
\end{align*}

Next, we define the vector space
$$
\sX := \Q \, \emptyset \, \oplus \, \bigoplus_{g\geq 1} X_{g}.
$$
It is generated by the generators of the $X_{g}$ of arbitrary genus, plus an additional generator $\emptyset$. We think of $\emptyset$ as the empty graph in the blown-up picture, or the graph with a single special vertex in the original picture. 

There is a left action of the dg algebra $G_1$ on the complex $\sX$
defined as follows:
\begin{equation}\label{equ:circ action def}
\begin{gathered}
G_1 \otimes \sX \to \sX \\
(\Gamma, \nu) \mapsto \Gamma \circ \nu = \sum 
\begin{tikzpicture}
    \node[ext] (v) at (0,.8) {$\Gamma$};
    \node[ext, dotted, minimum size=10mm] (w) at (0,-.5) {};
    \draw (v) edge (w) (v.west) edge (w) (v.east) edge (w);
    \begin{scope} [yshift=-.6cm, scale=.4]
        \node[ext] (v) at (0,.5) {$\scriptstyle \nu$};
          \end{scope}
\end{tikzpicture}
\end{gathered}
\end{equation}
Here again one inserts the graph $\nu$ into the external vertex of $\Gamma$, and reconnects the incident half-edges in all possible ways to vertices of $\nu$.
If a half-edge of $\Gamma$ is reconnected to the special vertex of $\nu$, then the rule is that the half-edge becomes non-marked. Equivalently, in the blown-up picture, each half-edge of $\Gamma$ incident to the special vertex is either reconnected to an internal vertex of $\nu$ or else decorated by $\epsilon$. In particular, if $\nu=\emptyset$, then one just labels all of the half-edges of $\Gamma$ incident to the external vertex by $\epsilon$.

\begin{lemma}\label{lem:circ well def}
The operation $\circ$ above is a well defined left action of the dg associative algebra $G_1$ on the dg vector space $\sX$.
\end{lemma}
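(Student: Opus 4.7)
The argument splits into three parts: well-definedness, associativity of the action, and compatibility with the differentials.

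Well-definedness under the defining relations \eqref{equ:X gen rel 2} and \eqref{equ:X gen rel 1} is immediate, once we fix the convention that the orientation on $\Gamma \circ \nu$ is obtained by concatenating the orderings of $\Gamma$ and $\nu$. Any automorphism of $\Gamma$ or of $\nu$ permutes the summands of $\Gamma \circ \nu$ and preserves orientations, while a reordering of edges on either side induces the expected sign on each summand. Associativity $(\Gamma_1 \circ \Gamma_2) \circ \nu = \Gamma_1 \circ (\Gamma_2 \circ \nu)$ is the standard operadic associativity for iterated insertion into an external vertex, combined with the observation that the rule ``half-edges reaching the special vertex become $\epsilon$'' is transitive: in both iterated compositions, a half-edge of $\Gamma_1$ that ultimately arrives at the special vertex of $\nu$ is $\epsilon$-decorated in the final graph.

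The main task is to prove the Leibniz rule
\[
  \delta(\Gamma \circ \nu) \;=\; \delta(\Gamma)\circ \nu \;+\; (-1)^{|\Gamma|}\, \Gamma \circ \delta(\nu).
\]
Decompose $\delta$ on $\sX$ as $\delta_s^\bullet + \delta_s^\circ + \delta_\omega$ and $\delta$ on $G_1$ as $\delta_s^{\mathrm{int}} + \delta_s^{\mathrm{ext}}$ (splittings of internal and external vertices). I then match terms on each side according to which vertex or half-edge is being acted on in a representative graph summand of $\Gamma \circ \nu$:
\begin{itemize}
\item $\delta_\omega(\Gamma \circ \nu) = (-1)^{|\Gamma|}\,\Gamma \circ \delta_\omega(\nu)$, because the only $\omega$-decorated half-edges at the special vertex of $\Gamma \circ \nu$ are those inherited from $\nu$; half-edges transplanted from $\Gamma$ are all $\epsilon$.
\item Splits at an internal vertex of $\Gamma$ inside $\Gamma \circ \nu$ match $\delta_s^{\mathrm{int}}(\Gamma)\circ \nu$.
\item Splits at an internal vertex $w$ of $\nu$ inside $\Gamma \circ \nu$ contain contributions where some $\Gamma$-reconnected half-edges at $w$ are split off to the new vertex. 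These either match $\Gamma \circ \delta_s^\bullet(\nu)$ (when the new vertex takes only original $\nu$-half-edges) or they match $\delta_s^{\mathrm{ext}}(\Gamma)\circ \nu$ (when the split creates a new internal vertex that is reconnected to $w$).
\item Splits of the special vertex (the $\delta_s^\circ$-terms) decompose in the same way: depending on whether the split-off subset $B$ of half-edges consists of original $\nu$-half-edges or of $\Gamma$-transplanted $\epsilon$-half-edges, the term is accounted for by $\Gamma \circ \delta_s^\circ(\nu)$ or by $\delta_s^{\mathrm{ext}}(\Gamma)\circ \nu$ (with the new edge of the external vertex split going to the special vertex of $\nu$ after insertion).
\end{itemize}

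The main obstacle is the careful bookkeeping for the last two bullet points, where $B$ may contain a mix of $\nu$-original and $\Gamma$-reconnected half-edges. For such mixed $B$, the same graph appears in two places on the right-hand side, and the matching has to be organized so that these contributions combine without double-counting and with compatible signs. This is handled by using the convention that newly created edges appear first in the edge ordering, that edges of $\Gamma$ precede edges of $\nu$ in the orientation of $\Gamma \circ \nu$, and that the number-of-edges degree $|\Gamma|$ produces precisely the Koszul sign $(-1)^{|\Gamma|}$ needed in the Leibniz rule. Once this bookkeeping is in place, every graph summand of $\delta(\Gamma \circ \nu)$ appears exactly once on the right-hand side with the correct sign, completing the proof.
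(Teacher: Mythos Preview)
Your strategy matches the paper's---decompose $\delta$ and match terms by the vertex being split---and your handling of $\delta_\omega$ and of internal-vertex splits of $\Gamma$ is correct. But the bookkeeping for splits at vertices of $\nu$ has a real gap. When a non-special vertex $w$ of $\nu$ is split inside $\Gamma\circ\nu$, the new vertex $w'$ may carry $k$ original $\nu$-half-edges and $m$ reconnected $\Gamma$-half-edges (plus the connecting edge). The case $k=0$ is matched by $\delta_s^{\mathrm{ext}}(\Gamma)\circ\nu$, and $k\geq 2$ is matched by $\Gamma\circ\delta_s^\bullet\nu$ (split $w$ in $\nu$ first, then reconnect the $\Gamma$-half-edges to either piece). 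Neither of these double-counts: $\delta_s^{\mathrm{ext}}(\Gamma)\circ\nu$ can only create a new internal vertex carrying \emph{no} $\nu$-half-edges, so your claim that mixed terms ``appear in two places on the right-hand side'' is not correct.

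The actual obstacle is the case $k=1$ (the paper's type (B1), and analogously (C1) at the special vertex). Such a term is valid on the left---valence $\geq 3$ at $w'$ is ensured by the $m\geq 1$ reconnected $\Gamma$-half-edges---but is produced by \emph{neither} summand on the right: $\Gamma\circ\delta_s\nu$ forbids it by the valence constraint on $\nu$ alone, and $\delta_s^{\mathrm{ext}}(\Gamma)\circ\nu$ cannot put a $\nu$-half-edge on the new vertex. These terms must therefore cancel in pairs \emph{on the left-hand side}: the single old half-edge at $w'$ lies on some edge $e$ of $\nu$, and the same resulting graph arises once from splitting $w$ and once from splitting the vertex at the other end of $e$, with opposite signs because the newly created edge is placed first in the ordering in each case. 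This self-cancellation is the step your proposal is missing.
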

\begin{proof}
  First, let us check that for graphs $\Gamma_1,\Gamma_2\in G_1$ and $\nu\in \sX$ we have
  \[
    \Gamma_1\circ(\Gamma_2\circ \nu) 
    =
    (\Gamma_1\circ\Gamma_2)\circ \nu. 
  \]
Each side is the sum of graphs obtained by reconnecting the half-edges incident at the external vertex of $\Gamma_1$ to internal vertices of $\Gamma_2$ or vertices of $\nu$, and half-edges incident to the external vertex of $\Gamma_2$ to vertices of $\nu$.
  Hence both sides agree.

  Next, we have to check compatibility with the differentials, that is 
  \[
  \delta(\Gamma\circ \nu) = (\delta\Gamma)\circ \nu + (-1)^{|\Gamma|} \Gamma \circ \delta \nu.
  \]
  This verification is straightforward. We outline the argument.  First, note that
  \[
    \delta_\omega(\Gamma\circ \nu) = (-1)^{|\Gamma|} \Gamma \circ \delta_\omega \nu,
  \]
  since $\Gamma \circ(-)$ does not interact with the decorations $\epsilon$ or $\omega$.
  It remains to show that 
  \begin{equation}\label{equ:deltas com tbs}
    \delta_s(\Gamma\circ \nu) = (\delta\Gamma)\circ \nu + (-1)^{|\Gamma|} \Gamma \circ \delta_s \nu.
  \end{equation}
  The terms appearing on either side are of one of four types:
  \begin{itemize}
    \item (A) Terms arising from splitting an internal vertex of $\Gamma$.
    Those are the same on both sides and can be ignored.
    \item (B) Terms arising from splitting a non-special vertex $v$ of $\nu$.
    On the left-hand side of \eqref{equ:deltas com tbs} the splitting is performed after (possibly) some edges of $\Gamma$ have been connected to $v$. Pictorially,
    \[
      \begin{tikzpicture}
        \node[int] (v) at (0,0) {};
        \node[int] (w) at (1,0) {};
        \draw (v) edge (w) edge +(-.5,.5) edge +(-.5, -.5) edge[dotted] +(0,.5)
        (w) edge +(+.5,.5) edge +(+.5, -.5) edge[dotted] +(0,.5);
      \end{tikzpicture}
    \]
    with the new edges drawn dotted.
    The same terms are produced on the right-hand side of \eqref{equ:deltas com tbs}, except for graphs for which there are 0 (say type (B0)) or 1 (say type (B1)) old edges incident to one of the vertices.
    \begin{align*}
      \text{(B0)}:
      &\begin{tikzpicture}
        \node[int] (v) at (0,0) {};
        \node[int] (w) at (1,0) {};
        \draw (v) edge (w) edge[dotted] +(-.5,.5) edge[dotted] +(-.5, -.5) edge[dotted] +(0,.5)
        (w) edge +(+.5,.5) edge +(+.5, -.5) edge[dotted] +(0,.5);
      \end{tikzpicture}
      &
      \text{(B1)}:
      &\begin{tikzpicture}
        \node[int] (v) at (0,0) {};
        \node[int] (w) at (1,0) {};
        \draw (v) edge (w) edge[dotted] +(-.5,.5) edge +(-.5, -.5) edge[dotted] +(0,.5)
        (w) edge +(+.5,.5) edge +(+.5, -.5) edge[dotted] +(0,.5);
      \end{tikzpicture}
    \end{align*}
    \item (C) There are terms arising from splitting the special vertex of $\nu$ on the left- and right-hand side of \eqref{equ:deltas com tbs}.
    They are handled just as those of type (B) above, and match except for terms of types
    \begin{align*}
      \text{(C0)}:
      &\begin{tikzpicture}
        \node[int] (v) at (0,0) {};
        \node[ext, accepting] (w) at (1,0) {};
        \draw (v) edge (w) edge[dotted] +(-.5,.5) edge[dotted] +(-.5, -.5) edge[dotted] +(0,.5)
        (w) edge +(+.5,.5) edge +(+.5, -.5) edge[dotted] +(0,.5);
      \end{tikzpicture}
      &
      \text{(C1)}:
      &\begin{tikzpicture}
        \node[int] (v) at (0,0) {};
        \node[ext, accepting] (w) at (1,0) {};
        \draw (v) edge (w) edge[dotted] +(-.5,.5) edge +(-.5, -.5) edge[dotted] +(0,.5)
        (w) edge +(+.5,.5) edge +(+.5, -.5) edge[dotted] +(0,.5);
      \end{tikzpicture}
    \end{align*}
    that appear on the left-hand side of \eqref{equ:deltas com tbs}, but (a priori) not on the right.
    \item (D) Finally, we have terms on the right-hand side of \eqref{equ:deltas com tbs} from splitting the external vertex of $\Gamma$. Those are of course absent from the left-hand side, because the operation $\Gamma\circ (-)$ removes the external vertex of $\Gamma$.
    However, upon inspection these terms of type (D) on the right-hand side exactly match the terms of type (B0) and (C0) above on the left-hand side.
  \end{itemize}
It remains to show that the yet unmatched terms (B1) and (C1) on the left-hand side of \eqref{equ:deltas com tbs} all cancel in pairs.
To see this mind that each such term appears twice, once from either side of the ``old" edge attached to the vertex that was split off:
\begin{align*}
  &\begin{tikzpicture}
    \node[int] (v) at (0,0) {};
    \node[int] (w) at (1,0) {};
    \node[int] (m) at (.3,0) {};
    \draw (v) edge (m) edge +(-.5,.5) edge +(-.5, -.5) 
    (w) edge (m) edge +(+.5,.5) edge +(+.5, -.5) 
    (m) edge[dotted] +(-.2,.5) edge[dotted] +(.2,.5);
  \end{tikzpicture}
  &\text{and}&
  &\begin{tikzpicture}
    \node[int] (v) at (0,0) {};
    \node[int] (w) at (1,0) {};
    \node[int] (m) at (.7,0) {};
    \draw (v) edge (m) edge +(-.5,.5) edge +(-.5, -.5) 
    (w) edge (m) edge +(+.5,.5) edge +(+.5, -.5) 
    (m) edge[dotted] +(-.2,.5) edge[dotted] +(.2,.5);
  \end{tikzpicture} 
  \\
  \text{or}\quad&\begin{tikzpicture}
    \node[int] (v) at (0,0) {};
    \node[ext, accepting] (w) at (1,0) {};
    \node[int] (m) at (.3,0) {};
    \draw (v) edge (m) edge +(-.5,.5) edge +(-.5, -.5) 
    (w) edge (m) edge +(+.5,.5) edge +(+.5, -.5) 
    (m) edge[dotted] +(-.2,.5) edge[dotted] +(.2,.5);
  \end{tikzpicture}
  &\text{and}&
  &\begin{tikzpicture}
    \node[int] (v) at (0,0) {};
    \node[ext, accepting] (w) at (1,0) {};
    \node[int] (m) at (.7,0) {};
    \draw (v) edge (m) edge +(-.5,.5) edge +(-.5, -.5) 
    (w) edge (m) edge +(+.5,.5) edge +(+.5, -.5) 
    (m) edge[dotted] +(-.2,.5) edge[dotted] +(.2,.5);
  \end{tikzpicture} \,.
\end{align*}
The two terms cancel in each case.  To see this, recall that the newly produced edge from the splitting is always the first in the ordering.  From this, one deduces that the two terms have opposite signs, and the lemma follows.
\end{proof}

Next, let $\gamma\in \GC_0$ be a graph.
Then we define the element $\gamma^1\in G_1$ by summing over all ways of making one vertex of $\gamma$ external.  For example:
\[
\gamma =
\begin{tikzpicture}[rotate=-45, yshift=.5cm, scale=.8]
    \node[int] (v1) at (0,0) {};
    \node[int] (v2) at (1,0) {};
    \node[int] (v3) at (0,1) {};
    \node[int] (v4) at (1,1) {};
    \draw (v1) edge (v2) edge (v3) edge (v4)
    (v2) edge (v3) edge (v4)
    (v3) edge (v4)
    (v4);
\end{tikzpicture} 
\ \mapsto  \ 
\gamma^1 = 
4\
\begin{tikzpicture}[rotate=-45, yshift=.5cm, scale=.8]
    \node[int] (v1) at (0,0) {};
    \node[ext] (v2) at (1,0) {};
    \node[int] (v3) at (0,1) {};
    \node[int] (v4) at (1,1) {};
    \draw (v1) edge (v2) edge (v3) edge (v4)
    (v2) edge (v3) edge (v4)
    (v3) edge (v4)
    (v4);
\end{tikzpicture} 
\]
We also define $\gamma^\circ\in G_1$ by attaching one edge and the external vertex.  Similarly, we define $\gamma^\epsilon\in \sX$ by adding one $\epsilon$-leg and $\gamma^\omega\in \sX$ by adding one $\omega$-leg.  For example, with $\gamma$ as above:
\[
    \gamma^\circ = 
    4\
    \begin{tikzpicture}[rotate=-45, yshift=.5cm, scale=.8]
        \node[int] (v1) at (0,0) {};
        \node[int] (v2) at (1,0) {};
        \node[int] (v3) at (0,1) {};
        \node[int] (v4) at (1,1) {};
        \node[ext] (e) at (1.7,-.7) {};
        \draw (v1) edge (v2) edge (v3) edge (v4)
        (v2) edge (v3) edge (v4) edge (e)
        (v3) edge (v4)
        (v4);
    \end{tikzpicture}    \quad, \quad 
        \gamma^\epsilon = 
        4\
        \begin{tikzpicture}[rotate=-45, yshift=.5cm, scale=.8]
            \node[int] (v1) at (0,0) {};
            \node[int] (v2) at (1,0) {};
            \node[int] (v3) at (0,1) {};
            \node[int] (v4) at (1,1) {};
            \node[] (e) at (1.7,-.7) {$\epsilon$};
            \draw (v1) edge (v2) edge (v3) edge (v4)
            (v2) edge (v3) edge (v4) edge (e)
            (v3) edge (v4)
            (v4);
        \end{tikzpicture} 
        \quad, \quad
\gamma^\omega = 
4\
\begin{tikzpicture}[rotate=-45, yshift=.5cm, scale=.8]
    \node[int] (v1) at (0,0) {};
    \node[int] (v2) at (1,0) {};
    \node[int] (v3) at (0,1) {};
    \node[int] (v4) at (1,1) {};
    \node[] (e) at (1.7,-.7) {$\omega$};
    \draw (v1) edge (v2) edge (v3) edge (v4)
    (v2) edge (v3) edge (v4) edge (e)
    (v3) edge (v4)
    (v4);
\end{tikzpicture}    \ .
\]
To fix the signs, the newly added edge becomes the first in the ordering of edges.
In the case of $\gamma^\omega$ the new (distinguished half-edge) $\omega$ stands right after the newly added edge in the ordering of edges and $\omega$s. Note that $\delta_\omega \gamma^\omega = -\gamma^\epsilon$.

\begin{lemma}\label{lem:some ops}
The above operations satisfy the following compatibility relations for $\gamma,\nu\in \GC_0$:
\begin{align}
    \label{equ:deltagamma1}
\delta (\gamma^1)- (\delta\gamma)^1 &= \gamma^\circ \\
\label{equ:poisbra1}
(\{\gamma_1,\gamma_2\})^1 &= \gamma_1^\circ \circ \gamma_2^1 + (-1)^{|\gamma_1||\gamma_2|}\gamma_2^\circ \circ \gamma_1^1
\end{align}
\end{lemma}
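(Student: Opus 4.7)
The plan is to prove both identities by direct case-by-case matching of terms, guided by the explicit definitions of the vertex-splitting differential, the operadic composition $\circ$, and the operations $(-)^1$ and $(-)^\circ$.

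For \eqref{equ:deltagamma1}, I would decompose $\delta = \delta_s^\bullet + \delta_s^\circ$ on $G_1$ and classify the terms of $(\delta\gamma)^1$ according to which vertex carries the external mark in relation to the splitting. The contributions where an internal vertex $v$ is split into $(v',v'')$ and the external mark is placed at some vertex $w \ne v',v''$ match bijectively with those of $\delta_s^\bullet(\gamma^1)$: first mark $w$ external, then split the still-internal vertex $v$. The remaining contributions to $(\delta\gamma)^1$, where the external mark lands on $v'$ or $v''$, correspond to the generic terms of $\delta_s^\circ(\gamma^1)$, namely those where, after splitting the external vertex of $\gamma^1_v$, both the external and the new internal vertex retain at least two of the original half-edges. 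The residual terms of $\delta_s^\circ(\gamma^1)$ — those in which all the original half-edges migrate to the new internal vertex, leaving the external attached only by a single new edge — sum over $v \in V\gamma$ to produce exactly $\gamma^\circ$.

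For \eqref{equ:poisbra1}, the bracket $\{\gamma_1,\gamma_2\}$ is a sum over pairs $(v,w) \in V\gamma_1 \times V\gamma_2$ of graphs obtained by joining $\gamma_1 \sqcup \gamma_2$ by a new edge between $v$ and $w$. Then $(\{\gamma_1,\gamma_2\})^1$ marks any vertex of the glued graph as external, and this vertex lies in either $\gamma_1$ or $\gamma_2$. If it lies in $\gamma_2$, the resulting graph is precisely what $\gamma_1^\circ \circ \gamma_2^1$ produces: inserting $\gamma_2^1$ into the valence-one external vertex of $\gamma_1^\circ$ reconnects the dangling half-edge to some vertex $w \in V\gamma_2$, reproducing the joining edge of the bracket, while $\gamma_2^1$ supplies $\gamma_2$ with its external. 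The symmetric case yields $\gamma_2^\circ \circ \gamma_1^1$, with the Koszul sign $(-1)^{|\gamma_1||\gamma_2|}$ coming from exchanging the two tensor factors.

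The main obstacle is sign bookkeeping: each operation specifies how new edges are inserted into the orientation — $\delta_s^\bullet$, $\delta_s^\circ$, and $(-)^\circ$ all prepend the new edge to the existing ordering, while operadic composition concatenates orderings in a prescribed way — and one must verify that each of the matchings described above carries the correct sign. A further subtle point in \eqref{equ:deltagamma1} is the handling of those terms of $\delta_s^\circ(\gamma^1)$ in which the new internal vertex receives all but one of the original half-edges, yielding an external of valence two; by the same type of edge-reordering anti-symmetry used in the proof of Lemma \ref{lem:X well defined}, such configurations are expected to cancel in pairs, leaving only the valence-one terms comprising $\gamma^\circ$.
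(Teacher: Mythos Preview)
Your proposal is correct and follows essentially the same approach as the paper: for \eqref{equ:deltagamma1} the paper likewise observes that marking a vertex commutes with splitting except at the external vertex, where the extra valence-$1$ terms give $\gamma^\circ$ and the valence-$2$ terms cancel in pairs; for \eqref{equ:poisbra1} the paper also splits according to whether the external mark lands in $\gamma_1$ or $\gamma_2$. The only cosmetic difference is that the paper cites the cancellation argument at the end of Lemma~\ref{lem:circ well def} rather than Lemma~\ref{lem:X well defined}, but the mechanism is the same.
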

\begin{proof}
For \eqref{equ:deltagamma1} note that marking one vertex commutes with vertex splitting, except that splittings of the external vertex can be such that the external vertex has valence 1 or 2 afterwards.
The terms with valence 1 are $\gamma^\circ$. The terms of valence 2 cancel by an argument similar to that at the end of the proof of Lemma \ref{lem:circ well def}.

For \eqref{equ:poisbra1} note that 
$(\{\gamma_1,\gamma_2\})^1$ is a linear combination of graphs obtained by connecting $\gamma_1$ and $\gamma_1$ by one edge, and marking one vertex of $\gamma_1$ or $\gamma_2$ as external.
The terms in which the external vertex is in $\gamma_2$ are the same as $\gamma_1^\circ \circ \gamma_2^1$, and the terms in which the external vertex is in $\gamma_1$ are the same as $(-1)^{|\gamma_1||\gamma_2|}\gamma_2^\circ \circ \gamma_1^1$.
\end{proof}

For graphs $x_1,\dots, x_k\in \sX$ let us denote by $x_1\cup \cdots \cup x_k$ their union. 
For $\gamma\in \GC_0$ and $x\in \sX$ define the operation 
\[
\lambda(\gamma, x) := \gamma^\circ \circ x - \gamma^\epsilon \cup x.     
\]
Concretely, this operation connects $\gamma$ and $x$ by an edge $(v,w)$, summing over all vertices of $v$ and all \emph{non-special} vertices $w$ of $x$.
\[
\lambda(\gamma, x) =
\sum
\begin{tikzpicture}
\node[ext] (v) at (0,0.4) {$\gamma$};
\node[ext] (w) at (1,0.4) {$x$};
\node[ext, accepting] (ws) at (1,-.4) {};
\draw (v) edge (w) (ws) edge (w) edge (w.west) edge (w.east);
\end{tikzpicture}
\]

\begin{lemma}
The operation $\lambda(\gamma,-)$ is a derivation, i.e., 
\begin{equation}\label{equ:la derivation}
 \lambda(\gamma,x_1\cup \cdots \cup x_k) 
 =
 \sum_{j=1}^k \pm x_1\cup \cdots \cup \lambda(\gamma,x_j) \cup \cdots\cup x_k.
\end{equation}
Furthermore, for $\gamma,\nu \in \GC_0$ 
\begin{align}
  \label{equ:la bracket1}
   \{\gamma,\nu\}^\epsilon &=  
-\lambda(\gamma,\nu^\epsilon) - (-1)^{|\gamma||\nu|}\lambda(\nu,\gamma^\epsilon), \mbox{ and }    \\  
\label{equ:la bracket2}
\{\gamma,\nu\}^\omega &=  
\lambda(\gamma,\nu^\omega) + (-1)^{|\gamma||\nu|}\lambda(\nu,\gamma^\omega)\, . 
\end{align}
\end{lemma}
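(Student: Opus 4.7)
The plan is to prove each of the three relations by unpacking the definitions and matching graph-theoretic contributions on both sides, with Koszul signs tracked from the stated orientation conventions.

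For \eqref{equ:la derivation}, I first rewrite $\lambda(\gamma, x) = \gamma^\circ \circ x - \gamma^\epsilon \cup x$ combinatorially. By the definition \eqref{equ:circ action def}, the reconnections in $\gamma^\circ \circ x$ split into those attaching the new edge to a non-special vertex of $x$ and those attaching it to the special vertex (in which case the incident half-edge becomes $\epsilon$-labeled, producing exactly $\gamma^\epsilon \cup x$, which is then cancelled by the subtraction). Hence $\lambda(\gamma, x)$ is the sum over non-special vertices $w$ of $x$ and vertices $v$ of $\gamma$ of the graph obtained by joining $v$ to $w$ by a new edge. When $x = x_1 \cup \cdots \cup x_k$, every such $w$ lies in a unique component $x_j$, and the attachment modifies only that component. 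Grouping by $j$ yields the derivation formula, with the Koszul sign $(-1)^{(|\gamma|+1)(|x_1|+\cdots+|x_{j-1}|)}$ produced by moving the degree-$(|\gamma|+1)$ operation $\lambda(\gamma,-)$ past the preceding factors.

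For \eqref{equ:la bracket1} and \eqref{equ:la bracket2}, both sides enumerate the same underlying graphs: $\gamma$ and $\nu$ joined by a new edge, together with an additional decorated leg at some vertex (of $\gamma$ or of $\nu$). When the leg sits on a vertex of $\nu$, the graph is produced by $\lambda(\gamma, \nu^{\epsilon})$ or $\lambda(\gamma, \nu^{\omega})$: the decorated version $\nu^{\epsilon}$ (respectively $\nu^{\omega}$) already carries the leg, and $\lambda(\gamma,-)$ attaches the connecting edge to a non-special vertex, i.e., to a vertex of $\nu$. Symmetrically, when the leg sits on a vertex of $\gamma$, the graph arises from $\lambda(\nu, \gamma^{\epsilon})$ or $\lambda(\nu, \gamma^{\omega})$, up to the Koszul factor $(-1)^{|\gamma||\nu|}$ from swapping the edge orderings of $\gamma$ and $\nu$, just as in the analogous term of the previously proven identity \eqref{equ:poisbra1}.

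It remains to reconcile the edge orderings on the two sides using the conventions set earlier, namely that each newly added edge is placed first in its ordering and that the marked half-edge in $(-)^\omega$ sits immediately after the new edge. Direct computation produces overall sign $-1$ on both contributions in the $\epsilon$ case, giving \eqref{equ:la bracket1}, while the extra marked half-edge present in $\gamma^\omega$ and $\nu^\omega$ contributes an additional transposition that flips both overall signs to $+1$, giving \eqref{equ:la bracket2}. The main obstacle is this sign bookkeeping; the combinatorial content of the two formulas is, in each case, simply the decomposition of $\{\gamma,\nu\}^\star$ according to whether the extra leg sits on $\gamma$ or on $\nu$.
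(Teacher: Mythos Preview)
Your proposal is correct and follows essentially the same route as the paper. For \eqref{equ:la derivation} the paper simply appeals to the pictorial description of $\lambda$ (summing over non-special vertices of $x$), which is exactly what you unpack; for \eqref{equ:la bracket1} and \eqref{equ:la bracket2} the paper likewise splits $\{\gamma,\nu\}^\star$ according to whether the extra leg lands on $\gamma$ or $\nu$, compares the induced edge orderings to produce the sign $-1$ in the $\epsilon$ case, and then notes that the odd degree of the $\omega$-decoration flips the sign in the $\omega$ case---precisely your argument.
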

\begin{proof}
The derivation property \eqref{equ:la derivation} is clear from the pictorial description of $\lambda$ above the lemma.

We hence focus on \eqref{equ:la bracket1}, \eqref{equ:la bracket2}.
The element $\{\gamma,\nu\}^\epsilon$ is obtained by connecting $\gamma$ and $\nu$ by one edge and adding an  $\epsilon$-leg at a vertex of either $\gamma$ or $\nu$.
\[
  \{\gamma,\nu\}^\epsilon
  =
  \begin{tikzpicture}
    \node[ext] (v) at (0,0.4) {$\gamma$};
    \node[ext] (w) at (1,0.4) {$\nu$};
    \node[] (ws) at (1,-.4) {$\epsilon$};
    \draw (v) edge (w) (ws) edge (w);
    \end{tikzpicture}
    +
    \begin{tikzpicture}
      \node[ext] (v) at (0,0.4) {$\gamma$};
      \node[ext] (w) at (1,0.4) {$\nu$};
      \node[] (ws) at (0,-.4) {$\epsilon$};
      \draw (v) edge (w) (ws) edge (v);
      \end{tikzpicture}
\]
To fix the sign, note that in the implicit ordering of edges, the $\epsilon$-edge is first, then the horizontal edge, then the edges of $\gamma$, then those of $\nu$.
On the other hand 
\begin{align*}
  \lambda(\gamma,\nu^\epsilon)  
  =
  \begin{tikzpicture}
    \node[ext] (v) at (0,0.4) {$\gamma$};
    \node[ext] (w) at (1,0.4) {$\nu$};
    \node[] (ws) at (1,-.4) {$\epsilon$};
    \draw (v) edge (w) (ws) edge (w);
  \end{tikzpicture}
\end{align*}
with the ordering of edges such that the horizontal edge is first and the $\epsilon$-edge second, then the edges of $\gamma$, then those of $\nu$. Finally 
\begin{align*}
  \lambda(\nu,\gamma^\epsilon)  
  =
  \begin{tikzpicture}
    \node[ext] (v) at (0,0.4) {$\nu$};
    \node[ext] (w) at (1,0.4) {$\gamma$};
    \node[] (ws) at (1,-.4) {$\epsilon$};
    \draw (v) edge (w) (ws) edge (w);
  \end{tikzpicture}
\end{align*}
with the analogous ordering of edges.
Hence \eqref{equ:la bracket1} follows.
Equation \eqref{equ:la bracket2} is shown similarly, with the caveat that there is an extra sign due to the $\omega$-decoration being of odd degree, i.e., the assignment $\gamma\mapsto \gamma^\omega$ is of even degree zero.
\end{proof}

\subsection{The map $G$} Let $J = \{j_1, \ldots, j_k\}$ denote an ordered subset of $\{1, \ldots, n\}$, i.e., with indices chosen so that $j_1 < \cdots < j_k$.
We define a linear map
\[
G \colon (\Sym(\GC_0),\delta+\delta_{\{,\}}) \to \sX
\]
by the formula 
\begin{align*}
G(\gamma_1\cdots\gamma_n) 
&=
\sum_{|J| = k} \,
\sum_{\sigma\in \bS_{k}} \pm 
\gamma_{j_{\sigma(1)}}^1 \circ   \cdots  \circ \gamma_{j_{\sigma(k)}}^1 \circ
\cup_{i \notin J} \gamma_{i}^\omega
\\&=
\cup_{i=1}^n \gamma_{i}^\omega
+
\sum_{\ell=1}^n \pm 
\gamma_\ell^1\circ (\cup_{i=1\atop i\neq \ell}^n \gamma_{i}^\omega)  
+\cdots
.
\end{align*}
\noindent The sign reflects the permutation of the symbols $\gamma_j$ in the formula. The term with $J=\{1,\dots,k\}$ and $\sigma$ the identity comes with sign $+$. This, together with the conventions on permuting factors in the symmetric product discussed in \S\ref{sec:dg preliminaries}, determines all of the other signs.

\begin{prop}
The linear map $G$ is a map of dg vector spaces, i.e. $(\delta_s + \delta_\omega)\circ G=G\circ (\delta+\delta_{\{,\}}).$
\end{prop}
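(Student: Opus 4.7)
The strategy is a direct expansion-and-match argument, reducing both $(\delta_s + \delta_\omega) G$ and $G(\delta + \delta_{\{,\}})$ to a common normal form built from the operation $\lambda(\gamma,x)=\gamma^\circ\circ x-\gamma^\epsilon\cup x$, and then matching termwise.

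First, because $\delta_s+\delta_\omega$ is a derivation with respect to both $\circ$ (by Lemma~\ref{lem:circ well def}) and disjoint union $\cup$, it propagates across the factors of each summand in $G(\gamma_1\cdots\gamma_n)$. I would then substitute $\delta(\gamma^1)=(\delta\gamma)^1+\gamma^\circ$ from \eqref{equ:deltagamma1}, along with the direct identities $\delta_s(\gamma^\omega)=(\delta\gamma)^\omega$ and $\delta_\omega(\gamma^\omega)=-\gamma^\epsilon$. This splits $(\delta_s^\bullet+\delta_\omega)G(\gamma_1\cdots\gamma_n)$ into \emph{inner} terms of the form $(\delta\gamma_i)^1$ or $(\delta\gamma_i)^\omega$, which when collected over all $J,\sigma$ reassemble precisely into $G(\delta(\gamma_1\cdots\gamma_n))$, plus \emph{correction} terms in which some $\gamma_i^1$ is replaced by $\gamma_i^\circ$ or some $\gamma_i^\omega$ is replaced by $-\gamma_i^\epsilon$.

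Next, I would treat the remaining $\delta_s^\circ$ piece, which splits the special vertex by pulling some subset $B$ of its half-edges onto a new non-special vertex joined to it by a single new edge. In $G(\gamma_1\cdots\gamma_n)$, the half-edges at the special vertex come from two sources: the $\omega$-legs of the unused $\gamma_j^\omega$ summands, and the $\epsilon$-labels produced by the $\circ$-insertions $\gamma_j^1\circ$. If $B$ consists exactly of the half-edges descending from a single factor $\gamma_i$, then the resulting graph is the $\gamma_i^\circ\circ$-part of $\lambda(\gamma_i,\,\cdot\,)$ applied to the remainder; together with the corresponding $-\gamma_i^\epsilon\cup$-part supplied by the corrections of the previous step, one obtains a sum
\[
\sum_i \pm\,\lambda\big(\gamma_i,\;G(\gamma_1\cdots\hat\gamma_i\cdots\gamma_n)\big),
\]
plus analogous pieces for subsets $B$ mixing half-edges from two factors $\gamma_i,\gamma_j$.

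On the other side, $G(\delta_{\{,\}}(\gamma_1\cdots\gamma_n))=\sum_{i<j}\pm\,G(\{\gamma_i,\gamma_j\}\,\gamma_1\cdots\hat\gamma_i\cdots\hat\gamma_j\cdots\gamma_n)$. In each summand, the bracket $\{\gamma_i,\gamma_j\}$ acquires either an $\omega$-decoration, a $^1$-external marking, or both, depending on its position in the $J,\sigma$-expansion of $G$; by \eqref{equ:poisbra1} and \eqref{equ:la bracket1}--\eqref{equ:la bracket2}, each such decorated bracket rewrites as a sum of $\lambda$-expressions in the original $\gamma_i,\gamma_j$, which matches the sum assembled in the previous paragraph. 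The main obstacle throughout is the sign bookkeeping: every manipulation introduces signs from edge-orderings on graphs and from Koszul signs on $\Sym(\GC_0)$, and the explicit sign in the definition of $G$ (the sign of the permutation rearranging the $\gamma_j$'s) is engineered precisely so that these all conspire. Verifying the match sign-by-sign in the last two steps is the most delicate part of the argument.
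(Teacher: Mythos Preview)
Your overall strategy is the same as the paper's --- push the differential through the $\circ$'s using the dg-module structure (Lemma~\ref{lem:circ well def}), substitute \eqref{equ:deltagamma1} and $\delta_\omega\gamma^\omega=-\gamma^\epsilon$, and match the resulting $\gamma^\circ$- and $\gamma^\epsilon$-corrections against $G(\delta_{\{,\}})$ expanded via \eqref{equ:poisbra1} and \eqref{equ:la bracket2}. That part is right.

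The problem is your second paragraph. Lemma~\ref{lem:circ well def} gives compatibility of $\circ$ with the \emph{full} differential $\delta_s+\delta_\omega$ on $\sX$ and the \emph{full} vertex-splitting differential on $G_1$ (including splits of the external vertex). Once you use it and plug in $\delta(\gamma^1)=(\delta\gamma)^1+\gamma^\circ$, you have already absorbed all of $\delta_s$, including $\delta_s^\circ$: the external-vertex split in $G_1$ is precisely what produces the $\gamma^\circ$ correction, and that is where the $\delta_s^\circ$-content goes. There is no ``remaining $\delta_s^\circ$ piece'' left over. Your description of $\delta_s^\circ$ acting by choosing a set $B$ ``descending from a single factor $\gamma_i$'' does not correspond to any term in the computation; carrying it out would double-count. (Relatedly, the slip from $\delta_s$ to $\delta_s^\bullet$ in the phrase ``This splits $(\delta_s^\bullet+\delta_\omega)G\ldots$'' is what sets up the confusion.)

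A secondary point: $\delta_s+\delta_\omega$ is \emph{not} a derivation with respect to $\cup$ in general, because $\delta_s^\circ$ can join $\epsilon$- and $\omega$-legs coming from different components. It happens that $\delta_s^\circ(\cup_i\gamma_i^\omega)=0$ here, since every leg is an $\omega$-leg and $|B|\geq 2$ with at most one $\omega$ forces $B$ to be empty; so this error is harmless in the end, but the justification as written is wrong.

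If you delete the second paragraph and run the first one with the full $\delta_s+\delta_\omega$, you land exactly on the paper's argument: the corrections consist of terms with one $\gamma_{j_{\sigma(\ell)}}^\circ$ inserted in the $\circ$-chain together with terms with one $\gamma_i^\epsilon$ in the union, and these are precisely the three displayed sums into which the paper expands $G(\delta_{\{,\}}(\gamma_1\cdots\gamma_n))$.
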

\begin{proof}
We have that $\delta_\omega \gamma^\omega = -\gamma^\epsilon$ and hence
\begin{align*}
\delta_\omega
G(\gamma_1\cdots\gamma_n) 
&=
\delta_\omega
\sum_{|J| = k } \, 
\sum_{\sigma\in \bS_{k}} \pm 
\gamma_{j_{\sigma(1)}}^1 \circ   \cdots  \circ \gamma_{j_{\sigma(k)}}^1 \circ
\cup_{i\notin J} \gamma_{i}^\omega
\\&=-
\sum_{|J| = k } \, 
\sum_{\sigma\in \bS_{k}} \pm
\gamma_{j_{\sigma(1)}}^1 \circ   \cdots  \circ \gamma_{j_{\sigma(k)}}^1 \circ
\sum_{\ell \notin J}
\gamma_\ell ^\epsilon \cup_{i \notin J\atop i \neq \ell}  \gamma_{i}^\omega.
\end{align*}
On the other hand, using \eqref{equ:deltagamma1} above
\[
\resizebox{.97\hsize}{!}{
$\displaystyle{
\delta_s
G(\gamma_1\cdots\gamma_n)  
-
\sum_{r=1}^n \pm   G(\gamma_1\cdots\delta\gamma_r\cdots\gamma_n)
=
\sum_{|J| = k } \, 
\sum_{\sigma\in \bS_{k}} \pm
\sum_{\ell=1}^k
\pm 
\gamma_{j_{\sigma(1)}}^1 \circ   \cdots \gamma_{j_{\sigma(\ell)}}^\circ \cdots   \circ \gamma_{j_{\sigma(k)}}^1 \circ
\cup_{i\notin J} \gamma_{i}^\omega\,.
}$}
\]
Next, using \eqref{equ:poisbra1} and \eqref{equ:la bracket2}, we have 
\[
\begin{split}
    G(\delta_{\{,\}}(\gamma_1\cdots\gamma_n) )
&=
   \frac12 \sum_{|J| = k} \, \sum_{r=1}^{k-1} 
\sum_{\sigma\in \bS_{k}} \pm 
\gamma_{j_{\sigma(1)}}^1 \circ   \cdots \{\gamma_{j_{\sigma(r)}},\gamma_{j_{\sigma(r+1)}} \}^1 \cdots  \circ \gamma_{j_{\sigma(k)}}^1 \circ
\cup_{i\notin J}  \gamma_{i}^\omega \\
& \quad +
   \frac12 \sum_{|J| = k} \, 
\sum_{\sigma\in \bS_{k}} \pm 
\gamma_{j_{\sigma(1)}}^1 \circ \cdots \circ \gamma_{j_{\sigma(k)}}^1 \circ
\sum_{a,b\notin J} \{\gamma_a,\gamma_b\}^\omega
\cup_{i\notin J \atop i\neq a,b}  \gamma_{i}^\omega
\\&=
 \sum_{|J| = k} \, 
\sum_{\sigma\in \bS_{k}} \pm 
\gamma_{j_{\sigma(1)}}^1 \circ   \cdots \gamma_{j_{\sigma(r)}}^\circ \circ \gamma_{j_{\sigma(r+1)}}^1 \cdots  \circ \gamma_{j_{\sigma(k)}}^1 \circ
\cup_{i\notin J }  \gamma_{i}^\omega \\ 
& \quad
+
 \sum_{|J| = k} \, 
\sum_{\sigma\in \bS_{k}} \pm 
\gamma_{j_{\sigma(1)}}^1 \circ \cdots \circ \gamma_{j_{\sigma(k)}}^\circ \circ
\cup_{i\notin J}  \gamma_{i}^\omega \\
& \quad -
 \sum_{|J| = k} \, 
\sum_{\sigma\in \bS_{k}}  \pm 
\gamma_{j_{\sigma(1)}}^1 \circ \cdots \circ \gamma_{j_{\sigma(k)}}^1 \circ
\sum_{a\notin J}
\gamma_a^\epsilon
\cup_{i\notin J \atop a\neq i }   \gamma_{i}^\omega
.
\end{split}
\]
The proposition follows by summing the expressions above. 
\end{proof}

\subsection{Operation $\nabla$}
Consider the degree +1 operation 
\begin{gather*}
\nabla \colon \sX[-1] \to \sX \\
\nu \mapsto \sum 
\begin{tikzpicture}
  \node[ext, dotted, minimum size=7mm] (w) at (0,-.2) {$\scriptstyle \nu$};
  \draw (w) edge[loop above] (w);
\end{tikzpicture},
\end{gather*}
defined by summing over all ways of attaching one new edge to the graph $\nu$, between an arbitrary pair of vertices.
Comparing to \eqref{equ:circ action def} this can be identified as the extension of the action $\circ$ to the tadpole graph 
\[
    \tau' = 
\begin{tikzpicture}
\node[ext] (v) {};
\draw (v) edge[loop above] (v);
\end{tikzpicture}
\]
so that 
\[
  \nabla \nu = \tau' \circ \nu.
\]
As in Lemma \ref{lem:circ well def} we then have that
$$
\delta \nabla\nu + \nabla \delta \nu=
(\delta \tau')\circ \nu =0,
$$
since $\delta \tau'=
\begin{tikzpicture}
    \node[ext] (v) at (0,-.3) {};
    \node[int] (w) at (0,.3) {};
    \draw (w) edge[loop above] (w) edge (v);
\end{tikzpicture}
$ has a tadpole at an internal vertex and thus acts as zero.
Hence the operation $\nabla$ anti-commutes with the differential on $\sX$ and is a map of complexes.

\subsection{Construction of the map $F$ of Theorem \ref{thm:emb1}}

We define a map  $\Sym(\GC_0) \to X$ as the composition of the isomorphism $\Phi$ of Corollary~\ref{cor:CE map} and the map $G$ above:
\[
   (\Sym(\GC_0),\delta) 
   \xrightarrow{\Phi}
   (\Sym(\GC_0),\delta+\delta_{\{ , \} })  
   \xrightarrow{G} \sX.
\]
Note that if the argument on the left-hand side is in the subspace $S^{10}(\GC_0)$, then the image lies in the subspace $C[21]\subset X$ spanned by graphs with at most $10$ $\omega$-legs.
We hence can define the map $F$ of Theorem \ref{thm:emb1} by restricting $G\circ \Phi$. More precisely, $F$ is the composition 
\[
F : \Sym^{10}(\GC_0)^{(g-1)}[-11] \oplus \Sym^{10}(\GC_0)^{(g-2)}[-12]
\xrightarrow{(G\circ \Phi) \oplus (G\circ \Phi)}
C_{g} \oplus C_{g-1}[-1]
\xrightarrow{\mathit{id} + \nabla} C_{g}.
\]

\subsection{Hairy graph complex and $(\sX,\delta_s)$}
Our final goal is to show the injectivity claim of Theorem \ref{thm:emb1}.
To this end it will be necessary to study the cohomology of the complex $(X,\delta_s)$. This can equivalently be identified with the associated graded complex of $(\sX,\delta_\omega+\delta_s)$ under the filtration by number of $\omega$-legs.

We consider a graph complex $\fHGC$ generated by linear combinations of pairs $(\Gamma,o)$ with $\Gamma$ a possibly disconnected tadpole-free graph with (non-numbered) external legs. 
\[
  \begin{tikzpicture}
    \node[int] (v1) at (45:.6) {};
    \node[int] (v2) at (135:.6) {};
    \node[int] (v3) at (-135:.6) {};
    \node[int] (v4) at (-45:.6) {};
    \draw (v1) edge +(45:.5) edge (v2) edge (v3) edge (v4) (v2) edge (v3) edge (v4) (v3) edge (v4) (v4) edge +(-45:.5);
  \end{tikzpicture} 
  \,
  \begin{tikzpicture}
    \node[int] (v1) at (0,0) {};
    \draw (v1)  edge +(90:.5)  edge +(210:.5)  edge +(-30:.5) ;
  \end{tikzpicture}   
\]
We require that each vertex has valence $\geq 3$. The orientation $o=e_1\wedge\cdots \wedge e_k$ is an ordering of the set of structural (i.e., non-leg) edges, and we again identify isomorphic graphs and orderings up to sign, cf. \eqref{equ:bGK gen rel 1}, \eqref{equ:bGK gen rel 2}. By convention, we allow the graph $\Gamma$ to be the empty graph for convenience, but we forbid connected components that are just a single edge and do not contain a vertex.
The differential $\delta$ on $\fHGC$ is again given by vertex splitting 
  \begin{align*}
    \delta:
    \begin{tikzpicture}[baseline=-.65ex]
        \node[int] (v) at (0,0) {};
        \draw (v) edge +(-.3,-.3)  edge +(-.3,0) edge +(-.3,.3) edge +(.3,-.3)  edge +(.3,0) edge +(.3,.3);
        \end{tikzpicture}
        &\mapsto\sum
        \begin{tikzpicture}[baseline=-.65ex]
        \node[int] (v) at (0,0) {};
        \node[int] (w) at (0.5,0) {};
        \draw (v) edge (w) (v) edge +(-.3,-.3)  edge +(-.3,0) edge +(-.3,.3)
         (w) edge +(.3,-.3)  edge +(.3,0) edge +(.3,.3);
        \end{tikzpicture}  \, .       
\end{align*}
The degree of a graph is the number of structural edges.
The graph complex $(\fHGC,\delta)$ is well known in the literature. By \cite{CGP2} the cohomology of the connected part of loop order $g$ with $n$ legs (for $2g+n\geq 3$) computes the symmetric weight 0 part of the compactly supported cohomology of the moduli spaces of curves $W_0H_c^\bullet(\M_{g,n})_{\bbS_n}$. Closely related complexes also compute the rational homology of spaces of long knots \cite{AroneTurchin}.

We then define a map of dg vector spaces
\begin{gather}\label{equ:K def}
    K \colon (\Q\oplus \Q\alpha \oplus \Q\beta \oplus \Q\alpha\beta )\otimes \fHGC \to (\sX, \delta_s), 
\end{gather}
with $\alpha$ representing an $(\epsilon-\epsilon)$-edge and $\beta$ representing an $(\epsilon-\omega)$-edge.
More concretely, the map $K$ acts on the four summands as follows:
\begin{itemize}
\item On the first summand $\fHGC$ the map $K$ just acts as the natural inclusion:
\[
\begin{tikzpicture}[yshift=-.5cm]
    \node[int] (v) at (0,.5) {};
    \node (w1) at (-.5,-.2) {};
    \node (w2) at (0,-.2) {};
    \node (w3) at (.5,-.2) {};
    \draw (v) edge (w1) edge (w2) edge (w3);
\end{tikzpicture}
\begin{tikzpicture}
    \node[int] (v1) at (45:.6) {};
    \node[int] (v2) at (135:.6) {};
    \node[int] (v3) at (-135:.6) {};
    \node[int] (v4) at (-45:.6) {};
    \node (w1) at (45:1.2) {};
    \node (w2) at (-45:1.2) {};
    \draw (v1) edge (w1) edge (v2) edge (v3) edge (v4) (v2) edge (v3) edge (v4) (v3) edge (v4) (v4) edge (w2);
  \end{tikzpicture}
\mapsto  
\begin{tikzpicture}[yshift=-.5cm]
    \node[int] (v) at (0,.5) {};
    \node (w1) at (-.5,-.2) {$\omega$};
    \node (w2) at (0,-.2) {$\omega$};
    \node (w3) at (.5,-.2) {$\omega$};
    \draw (v) edge (w1) edge (w2) edge (w3);
\end{tikzpicture}
\begin{tikzpicture}
    \node[int] (v1) at (45:.6) {};
    \node[int] (v2) at (135:.6) {};
    \node[int] (v3) at (-135:.6) {};
    \node[int] (v4) at (-45:.6) {};
    \node (w1) at (45:1.2) {$\omega$};
    \node (w2) at (-45:1.2) {$\omega$};
    \draw (v1) edge (w1) edge (v2) edge (v3) edge (v4) (v2) edge (v3) edge (v4) (v3) edge (v4) (v4) edge (w2);
  \end{tikzpicture}  \quad .
\]
\item On $\Q\alpha\otimes \fHGC$ the map $K$ applies the natural inclusion followed by $\nabla$:
\[
\begin{tikzpicture}[yshift=-.5cm]
    \node[int] (v) at (0,.5) {};
    \node (w1) at (-.5,-.2) {};
    \node (w2) at (0,-.2) {};
    \node (w3) at (.5,-.2) {};
    \draw (v) edge (w1) edge (w2) edge (w3);
\end{tikzpicture}
\begin{tikzpicture}
    \node[int] (v1) at (45:.6) {};
    \node[int] (v2) at (135:.6) {};
    \node[int] (v3) at (-135:.6) {};
    \node[int] (v4) at (-45:.6) {};
    \node (w1) at (45:1.2) {};
    \node (w2) at (-45:1.2) {};
    \draw (v1) edge (w1) edge (v2) edge (v3) edge (v4) (v2) edge (v3) edge (v4) (v3) edge (v4) (v4) edge (w2);
  \end{tikzpicture}
\mapsto  
\nabla
\left(
\begin{tikzpicture}[yshift=-.5cm]
    \node[int] (v) at (0,.5) {};
    \node (w1) at (-.5,-.2) {$\omega$};
    \node (w2) at (0,-.2) {$\omega$};
    \node (w3) at (.5,-.2) {$\omega$};
    \draw (v) edge (w1) edge (w2) edge (w3);
\end{tikzpicture}
\begin{tikzpicture}
    \node[int] (v1) at (45:.6) {};
    \node[int] (v2) at (135:.6) {};
    \node[int] (v3) at (-135:.6) {};
    \node[int] (v4) at (-45:.6) {};
    \node (w1) at (45:1.2) {$\omega$};
    \node (w2) at (-45:1.2) {$\omega$};
    \draw (v1) edge (w1) edge (v2) edge (v3) edge (v4) (v2) edge (v3) edge (v4) (v3) edge (v4) (v4) edge (w2);
  \end{tikzpicture} 
  \right) \quad .
\]
\item On $\Q\beta\otimes \fHGC$ the map $K$ adds one $(\epsilon-\omega)$-edge, plus connects an additional $\omega$-leg. 
\begin{align*}
    \begin{tikzpicture}
        \node[int] (v1) at (45:.6) {};
        \node[int] (v2) at (135:.6) {};
        \node[int] (v3) at (-135:.6) {};
        \node[int] (v4) at (-45:.6) {};
        \node (w1) at (45:1.2) {};
        \node (w2) at (-45:1.2) {};
        \draw (v1) edge (w1) edge (v2) edge (v3) edge (v4) (v2) edge (v3) edge (v4) (v3) edge (v4) (v4) edge (w2);
      \end{tikzpicture}
      &\mapsto
    \begin{tikzpicture}
        \node (x1) at (-2,0) {$\omega$};
        \node (x2) at (-1,0) {$\epsilon$};
        \node[int] (v1) at (45:.6) {};
        \node[int] (v2) at (135:.6) {};
        \node[int] (v3) at (-135:.6) {};
        \node[int] (v4) at (-45:.6) {};
        \node (w1) at (45:1.2) {$\omega$};
        \node (w2) at (-45:1.2) {$\omega$};
        \draw (v1) edge (w1) edge (v2) edge (v3) edge (v4) (v2) edge (v3) edge (v4) (v3) edge (v4) (v4) edge (w2) (x1) edge (x2);
      \end{tikzpicture} 
      +
      \sum 
      \begin{tikzpicture}
        \node (x1) at (-2,0) {$\omega$};
        \node[ext, minimum size=1.4cm, dashed] (x2) at (0,0) {};
        \node[int] (v1) at (45:.6) {};
        \node[int] (v2) at (135:.6) {};
        \node[int] (v3) at (-135:.6) {};
        \node[int] (v4) at (-45:.6) {};
        \node (w1) at (45:1.2) {$\omega$};
        \node (w2) at (-45:1.2) {$\omega$};
        \draw (v1) edge (w1) edge (v2) edge (v3) edge (v4) (v2) edge (v3) edge (v4) (v3) edge (v4) (v4) edge (w2) (x1) edge (x2);
      \end{tikzpicture} 
      \\&=
      \begin{tikzpicture}
        \node (x1) at (-2,0) {$\omega$};
        \node (x2) at (-1,0) {$\epsilon$};
        \node[int] (v1) at (45:.6) {};
        \node[int] (v2) at (135:.6) {};
        \node[int] (v3) at (-135:.6) {};
        \node[int] (v4) at (-45:.6) {};
        \node (w1) at (45:1.2) {$\omega$};
        \node (w2) at (-45:1.2) {$\omega$};
        \draw (v1) edge (w1) edge (v2) edge (v3) edge (v4) (v2) edge (v3) edge (v4) (v3) edge (v4) (v4) edge (w2) (x1) edge (x2);
      \end{tikzpicture} 
      +
      2
      \begin{tikzpicture}
        \node (x1) at (135:1.2) {$\omega$};
        \node[int] (v1) at (45:.6) {};
        \node[int] (v2) at (135:.6) {};
        \node[int] (v3) at (-135:.6) {};
        \node[int] (v4) at (-45:.6) {};
        \node (w1) at (45:1.2) {$\omega$};
        \node (w2) at (-45:1.2) {$\omega$};
        \draw (v1) edge (w1) edge (v2) edge (v3) edge (v4) (v2) edge (v3) edge (v4) (v3) edge (v4) (v4) edge (w2) 
        (x1) edge (v2);
      \end{tikzpicture} 
      +2 \ \ 
      \begin{tikzpicture}
        \node (x1) at (0,.85) {$\omega$};
        \node[int] (v1) at (45:.6) {};
        \node[int] (v2) at (135:.6) {};
        \node[int] (v3) at (-135:.6) {};
        \node[int] (v4) at (-45:.6) {};
        \node (w1) at (45:1.2) {$\omega$};
        \node (w2) at (-45:1.2) {$\omega$};
        \draw (v1) edge (w1) edge (v2) edge (v3) edge (v4) (v2) edge (v3) edge (v4) (v3) edge (v4) (v4) edge (w2) 
        (x1) edge (v1);
      \end{tikzpicture} 
\end{align*}
      Here the sum is over all ways of connecting a new $\omega$-leg to an internal vertex. 
      \item On the summand $\Q\alpha\beta\otimes \fHGC$ the map $K$ acts as above, followed by $\nabla$:
    \[
        \begin{tikzpicture}
            \node[int] (v1) at (45:.6) {};
            \node[int] (v2) at (135:.6) {};
            \node[int] (v3) at (-135:.6) {};
            \node[int] (v4) at (-45:.6) {};
            \node (w1) at (45:1.2) {};
            \node (w2) at (-45:1.2) {};
            \draw (v1) edge (w1) edge (v2) edge (v3) edge (v4) (v2) edge (v3) edge (v4) (v3) edge (v4) (v4) edge (w2);
          \end{tikzpicture}
          \mapsto
          \nabla
          \left(
        \begin{tikzpicture}
            \node (x1) at (-2,0) {$\omega$};
            \node (x2) at (-1,0) {$\epsilon$};
            \node[int] (v1) at (45:.6) {};
            \node[int] (v2) at (135:.6) {};
            \node[int] (v3) at (-135:.6) {};
            \node[int] (v4) at (-45:.6) {};
            \node (w1) at (45:1.2) {$\omega$};
            \node (w2) at (-45:1.2) {$\omega$};
            \draw (v1) edge (w1) edge (v2) edge (v3) edge (v4) (v2) edge (v3) edge (v4) (v3) edge (v4) (v4) edge (w2) (x1) edge (x2);
          \end{tikzpicture} 
          +
          \sum 
          \begin{tikzpicture}
            \node (x1) at (-2,0) {$\omega$};
            \node[ext, minimum size=1.4cm, dashed] (x2) at (0,0) {};
            \node[int] (v1) at (45:.6) {};
            \node[int] (v2) at (135:.6) {};
            \node[int] (v3) at (-135:.6) {};
            \node[int] (v4) at (-45:.6) {};
            \node (w1) at (45:1.2) {$\omega$};
            \node (w2) at (-45:1.2) {$\omega$};
            \draw (v1) edge (w1) edge (v2) edge (v3) edge (v4) (v2) edge (v3) edge (v4) (v3) edge (v4) (v4) edge (w2) (x1) edge (x2);
          \end{tikzpicture} 
          \right)
          \]
\end{itemize}

\begin{prop}\label{prop:map K}
The map $K$ is a quasi-isomorphism of dg vector spaces.
\end{prop}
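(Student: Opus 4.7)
The argument splits into checking that $K$ is a chain map, and then showing that it induces an isomorphism on cohomology via a spectral sequence argument. For the chain-map property I would verify the claim on each of the four summands separately. On the plain $\fHGC$ summand, the image of $K$ consists of connected blown-up graphs whose legs are all $\omega$-labeled; any admissible subset $B$ in the definition of $\delta_s^\circ$ must have $|B|\geq 2$ with at most one $\omega$-leg, so $B$ would require at least one $\epsilon$-leg, which is absent. Hence $\delta_s^\circ$ vanishes on the image, and $\delta_s^\bullet$ restricts to the $\fHGC$ differential. On the $\Q\alpha\otimes\fHGC$ summand one uses that $\nabla=\tau'\circ(-)$ anticommutes with $\delta_s$, as recorded earlier (since $\delta\tau'$ lies in the forbidden span of graphs with tadpoles at internal vertices). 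The $\Q\beta\otimes\fHGC$ summand is the most delicate: the extra terms attaching a new $\omega$-leg at each internal vertex are engineered precisely so that $\delta_s^\circ$ applied to the isolated $\omega$--$\epsilon$-edge cancels against $\delta_s^\bullet$ applied to these extra-leg terms. The $\Q\alpha\beta\otimes\fHGC$ case combines the previous two.

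For the quasi-isomorphism claim, I would equip $\sX$ with a filtration by the number of $\epsilon$-legs. One checks that $\delta_s^\bullet$ preserves $\#\epsilon$, while $\delta_s^\circ$ strictly decreases it: every admissible subset $B$ consumes at least one $\epsilon$-leg and produces at most one. Thus on the $E_0$-page the differential reduces to $\delta_s^\bullet$, acting by vertex splitting on internal vertices of each connected blown-up component. Analogously, filter the source by assigning to each of its four summands the number of $\epsilon$-legs that $K$ eventually produces, so that $K$ becomes filtration-preserving; the $E_0$-differential on the source is then the $\fHGC$-differential on each summand. Using that $\sX$ decomposes as a symmetric product over connected blown-up components, the target $E_0$-cohomology becomes a symmetric algebra over two-coloured hairy graph cohomology, and $K$ picks out specific classes in each component type.

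The main work is at the $E_1$-level, where the residual differential is $\delta_s^\circ$ now acting across components. The claim I would aim for is: the only $E_1$-classes not killed by $\delta_s^\circ$ are those consisting of a single ``main'' connected hairy component with only $\omega$-hairs, optionally together with an isolated $\omega$--$\epsilon$ edge and/or an isolated $\epsilon$--$\epsilon$ edge (the $\omega$--$\omega$ edge vanishes by symmetry). These four patterns match the four summands in the source, and $K$ realises each of them. The main obstacle will be producing a concrete contracting homotopy on the complement of the image of $K$ at the $E_1$-level, analogous to but significantly more intricate than the homotopy $h$ used in the acyclicity proof of Lemma~\ref{lem:X well defined}: informally, one wants an operation that ``undoes'' an application of $\delta_s^\circ$ by splitting an $\epsilon$-leg back out of a hairy component. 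Once this contracting homotopy is built, the spectral-sequence comparison lemma will finish the argument.
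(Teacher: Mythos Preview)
Your chain-map verification is broadly on the right track and matches the paper's brief treatment; the paper likewise notes that the $\fHGC$ and $\Q\beta\otimes\fHGC$ summands are handled directly while the $\alpha$-summands use that $\nabla$ anticommutes with the differential.

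The quasi-isomorphism argument, however, has a genuine gap. Your filtration by $\#\epsilon$ is valid, but it leaves you with an $E_0$-page $(\sX,\delta_s^\bullet)$ whose cohomology is the full bicoloured hairy graph cohomology (all connected hairy graphs with an arbitrary mix of $\omega$- and $\epsilon$-legs), which is large and not independently known. You then postpone all of $\delta_s^\circ$ to the later pages and promise a contracting homotopy ``analogous to but significantly more intricate than'' the one in Lemma~\ref{lem:X well defined}. This is precisely the hard step, and you give no construction. In effect you would be reproving, from scratch, the computation of the mapping cone of $\HGC\to(\sX^{\mathrm{conn}},\delta_s)$, which is the content of Corollary~\ref{cor:TW tadpolefree} (imported from \cite{TWspherical}); that result is the engine of the paper's proof and is not at all elementary.

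The paper instead filters the mapping cone of $K$ by the \emph{number of connected components}. On the associated graded the full $\delta_s$ (not just $\delta_s^\bullet$) survives within each component, and Corollary~\ref{cor:TW tadpolefree} then says the connected cone has cohomology spanned by just $T$, $L^\omega$, $L^\epsilon$. This collapses the $E^1$-page of the cone to
\[
  \bigl(\Q T\otimes\Sym^{\geq 0}\Q L^\omega\ \oplus\ \Sym^{\geq 2}\Q L^\omega\bigr)\otimes(\Q\oplus\Q L^\epsilon)\otimes H(\fHGC),
\]
and the residual differential $(L^\omega)^k\mapsto -\binom{k}{2}(L^\omega)^{k-2}\cup T$ visibly kills the first factor. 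Note also a small inaccuracy in your target description: the surviving classes carry a full $H(\fHGC)$ factor (possibly disconnected), not merely a single connected hairy component.
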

\begin{proof}
First one checks that $K$ commutes with the differentials.
For the summands $\fHGC$ and  
$\Q\beta\otimes \fHGC$ this is straightforward. For the other two summands, one uses the fact that $\nabla$ commutes with the differentials.

It remains to check that the dg map $K$ is a quasi-isomorphism.
That is, we want to check that the mapping cone of $K$ is acyclic.
We filter both domain and target, and hence also the mapping cone by the number of connected components of graphs.
On the associated graded we see only those parts of the differential that leave the number of connected components the same. 
A close variant of the resulting complex has been studied by Turchin and the second author \cite{TWspherical}. We recall their main result in Appendix \ref{app:TW recollection} below, along with a slight variation  
(Corollary \ref{cor:TW tadpolefree}) that implies that the $E^1$ page of our spectral sequence has the form 
\[
E^1 = ( \Q T \otimes \Sym^{\geq 0}\Q L^\omega \oplus   \Sym^{\geq 2}\Q L^\omega ) \otimes (\Q \oplus \Q L^\epsilon) \otimes H(\fHGC)  \Rightarrow H(cone(K)),
\]
with $T, L^\epsilon, L^\omega$ the graphs of Appendix \ref{app:TW recollection}.
The differential on the $E^1$ page corresponds to those terms of $\delta_s$ that reduce the number of connected components by exactly one.
The key observation is that the component mapping the first tensor factor above to itself, 
\[
  \Sym^{\geq 2}\Q L^\omega  \to  \Q T \otimes \Sym^{\geq 0}\Q L^\omega 
\]
has the form 
\[
 \underbrace{L^\omega\cup \cdots  \cup L^\omega}_{k} 
 \mapsto 
 -\binom{k}{2} \underbrace{L^\omega\cup \cdots  \cup L^\omega}_{k-2}  \cup T.
\]
This obviously makes the first tensor factor acyclic.
By a simple spectral sequence argument it then follows that $H(E^1)=0$ and hence $H(cone(K))=0$ as desired.
\end{proof}

\subsection{Injectivity on cohomology and proof of Theorem \ref{thm:emb1}}

The projection to the subspace $C_{g,10\omega}\subset C_{g}$ spanned by graphs with exactly 10 $\omega$-legs is a map of dg vector spaces
\[
\pi \colon (C_g,\delta_s+\delta_\omega) \to (C_{g,10\omega},\delta_s).
\]
To show that $F$ induces an injective map on cohomology it hence suffices to show that $\pi\circ F$ induces an injective map on cohomology.
But $\pi\circ F$ is precisely the same as the map $K$ from the previous section restricted to a subspace of the $10$-hair part 
of the summand $(\Q\oplus\Q\alpha)\otimes \fHGC$.
More precisely, $\pi\circ F$ fits into a commutative diagram
\[
\adjustbox{scale=.93,center}{
\begin{tikzcd}
  \Sym^{10}(\GC_0)[-11] \oplus \Sym^{10}(\GC_0)[-12]
  \ar {rr}{\pi\circ F} \ar{dr}{\iota\oplus \iota}
  & & \bigoplus_g C_{g,10\omega} \\
  &\left( \fHGC[-11] \oplus \Q\alpha\otimes \fHGC[-12]\right)_{\text{10-hair}} \ar{ur}{K} & 
\end{tikzcd}
}
\]
with the map $\iota$ sending $\gamma_1\cdots\gamma_{10}\in  \Sym^{10}(\GC_0)$ to the hairy graph 
\[
\begin{tikzpicture}
\node[ext] (v) at (0,.3) {$\gamma_1$};
\draw (v) edge +(0,-.6);
\end{tikzpicture}
\cdots 
\begin{tikzpicture}
  \node[ext] (v) at (0,.3) {$\gamma_{10}$};
  \draw (v) edge +(0,-.6);
\end{tikzpicture}
\in \left(\fHGC\right)_{\text{10-hair}}
\]
The map $\iota$ induces an injection on cohomology by \cite[Theorem 1]{TWcommutative}.

But by Proposition \ref{prop:map K} the (restriction of the) map $K$ is also an injection on cohomology, and hence so is $\pi\circ F$.
\hfill\qed

\section{Case $n=0$ -- second injection} \label{sec:second inj}

In this section we shall describe a second family of nontrivial cohomology classes in $\gr_{11}H_c^\bullet(\M_g)$ that are built from cocycles in $\GC_0$.
More concretely, we will show the following result.

\begin{thm}\label{thm:emb 2}
There is a 
 map of dg vector spaces 
\[
E \colon 
\Sym^{9}(\GC_0)^{(g-3)}[-22]
\oplus
\Sym^{6}(\GC_0)^{(g-5)}[-22]
\oplus
\Sym^{3}(\GC_0)^{(g-7)}[-22]
\to
B_g
\]
that gives rise to an injective 
map on cohomology
\begin{align*}
    E\colon 
&H^{k-22}(\Sym^{9}(\GC_0))^{(g-3)} 
\oplus
H^{k-22}(\Sym^{6}(\GC_0))^{(g-5)} 
\oplus
H^{k-22}(\Sym^{3}(\GC_0))^{(g-7)} 
\to H^k(B_g).
\end{align*}
\end{thm}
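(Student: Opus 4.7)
The proof will mirror the blueprint of Theorem~\ref{thm:emb1}, replacing ``ten $\gamma^\omega$-components'' by ``$(12-3t)$ copies of $\gamma^\omega$ together with $t$ copies of the $\omega$-tripod,'' for $t\in\{1,2,3\}$. In each case the numerology is consistent: the total $\omega$-count is $(12-3t)+3t=12\geq 11$ and the total genus is $(g-2t-1)+2t+1=g$, so such a generator lies in $B_g$. Each of the three summands will be handled by a separate map $E_t$, and we set $E=E_1+E_2+E_3$.

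The first step is the construction of $E_t$. In analogy with $F$ in Section~\ref{sec:first inj}, the plan is to let $E_t(\gamma_1\cdots\gamma_{12-3t})$ be, to leading order, the element $G(\Phi(\gamma_1\cdots\gamma_{12-3t}))$ wedged with $t$ copies of the $\omega$-tripod. However, since the $\omega$-tripod is not $\delta$-closed ($\delta_\omega$ turns one of its $\omega$-legs into an $\epsilon$), this naive assignment is not a chain map, and $E_t$ must incorporate correction terms in which some of the tripods' $\omega$-legs are replaced by $\epsilon$-legs; these corrections are fixed by the requirement $\delta\circ E_t=E_t\circ\delta$. Verification of the chain-map property then proceeds by direct bookkeeping analogous to the proof that $G$ is a chain map, the $G\circ\Phi$ portion already interacting correctly with $\delta_s^\bullet$, $\delta_s^\circ$, and $\delta_\omega$ applied to the $\gamma^\omega$-components.

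For injectivity on cohomology, we mimic Section~\ref{sec:first inj}: project $B_g$ onto the subcomplex generated by graphs with exactly $12$ $\omega$-legs, and identify the composition $\pi\circ E_t$ with a map analogous to $K$ from~\eqref{equ:K def}, now admitting the $\omega$-tripod as an additional building block alongside $\alpha=(\epsilon{-}\epsilon)$ and $\beta=(\epsilon{-}\omega)$. A spectral-sequence argument paralleling the proof of Proposition~\ref{prop:map K}, based on \cite{TWspherical} via Corollary~\ref{cor:TW tadpolefree}, will identify the acyclic factors arising from the tripod structure and reduce the injectivity statement to that of the inclusion $\Sym^{12-3t}(\GC_0)\hookrightarrow\fHGC$ (into the $(12-3t)$-hair part), which is~\cite[Theorem 1]{TWcommutative}. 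Since the three maps $E_1,E_2,E_3$ land in cohomologically disjoint summands---distinguished by the number of tripod components in the image graphs---the direct sum map $E$ is injective.

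The main obstacle is the construction of the correction terms in $E_t$ and the chain-map verification: pinning down the precise form of the corrections that cancel the tripod boundary contributions requires a careful combinatorial analysis. A secondary technical point is extending the spectral sequence of Proposition~\ref{prop:map K} to recognize and remove the new acyclic factors introduced by the tripod building block; the overall structure of the argument, however, is parallel to that of the first injection.
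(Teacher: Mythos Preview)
Your high-level picture — use $t$ tripods together with $12-3t$ copies of $\gamma^\omega$ and add correction terms — is on the right track, but both the construction and the injectivity argument diverge from the paper in ways that matter.

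\textbf{Construction.} The paper does \emph{not} build $E_t$ by wedging $G\circ\Phi$ with tripods. Instead it writes down an explicit ansatz
\[
E_k(\gamma_1\cdots\gamma_r)=\Phi_k+k\,\tilde\Phi_{k-1}+\tfrac{3k}{10}\hat\Phi_{k-1}+3k\,\Psi(\Phi_{k-1})-\sum_i\gamma_i^1\circ\Phi_k+\sum_{i<j}\pm\Phi_k(\dots\gamma_i\bullet\gamma_j\dots),
\]
where $\Phi_k$ is just the bare union $\tau^{\cup k}\cup\gamma_1^\omega\cup\cdots\cup\gamma_r^\omega$. The crucial simplification you are missing is that $E_k$ is \emph{not} required to be a chain map into $\sX$: Lemma~\ref{lem:Ek differential} only shows that the commutator $[\delta,E_k]$ lands in the subspace with at most $10$ $\omega$-legs, so that after projecting $\pi\colon X_g[-22]\to B_g$ one obtains an honest dg map. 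Trying to enforce $\delta\circ E_t=E_t\circ\delta$ on the nose, as you propose, is unnecessarily hard.

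\textbf{Injectivity.} Your plan to ``project $B_g$ onto the subcomplex generated by graphs with exactly $12$ $\omega$-legs'' does not work as stated: since $\delta_\omega$ \emph{decreases} the $\omega$-count, the $12$-$\omega$ layer is neither a subcomplex nor a quotient complex of $B_g$ (a $13$-$\omega$ generator maps into it, and a $12$-$\omega$ generator maps out of it). The paper instead passes through $C_g$: compose $E$ with the connecting quasi-isomorphism $\delta_\omega\colon B_g\to C_g$ and then with the honest projection $\pi\colon C_g\to(C_{g,10\omega},\delta_s)$ to the top layer of $C_g$, which \emph{is} a quotient complex. The leading term of $\pi\circ\delta_\omega\circ E_k$ is then identified with the image of $K$ on the summand $\Q\alpha\beta\otimes\fHGC$ of Proposition~\ref{prop:map K}, applied to the hairy graph $\iota(\gamma_1\cdots\gamma_r)$ consisting of tripods together with the $\gamma_j^\omega$. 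No new building block or new spectral sequence is needed: the tripod is already a hairy graph in $\fHGC$, and injectivity follows from the already-established quasi-isomorphism $K$ together with \cite[Theorem~1]{TWcommutative}. The conceptual point, spelled out in the paper's motivation paragraph, is that the second injection targets the $\Q\alpha\beta$-summand of the decomposition in Proposition~\ref{prop:map K}, complementing the first injection which used $\Q\oplus\Q\alpha$.
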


\noindent The remainder of this section is concerned with the construction of the map $E$ and the proof that it induces an injection on cohomology. 

Since the construction is fairly technical and ad hoc, we shall first describe the idea here.
Recall that in the previous section we were able to construct explicit cocycles (say $x\in C_g$) in the  graph complex $(C_g, \delta_s+\delta_\omega)$ consisting of graphs with at most ten $\omega$-legs.
The idea to show that $x$ indeed represents a non-trivial cohomology class was to consider the projection $\pi: (C_g, \delta_s+\delta_\omega)\to (C_{g,10\omega}, \delta_{s})$, and use that the cohomology of the latter complex is computable. Concretely,by Proposition \ref{prop:map K} it agrees with the genus $g$- and $10$-hair part of 
\begin{gather}\label{equ:motivation C10}
  (\Q\oplus \Q\alpha \oplus \Q\beta \oplus \Q\alpha\beta )\otimes H(\fHGC),
\end{gather}
and we know many non-trivial classes in the hairy graph cohomology $H(\fHGC)$ from previous work in the literature.

Let us split $x=\sum_{j=0}^{10}x_j$ into components $x_j$ with $j$ many $\omega$-legs, then the top piece $x_{10}$ represents a non-trivial cohomology class in $C_{g,10\omega}$ (i.e., in \eqref{equ:motivation C10}).
Conversely, we may ask for a given cocycle $x_{10}\in C_{g,10\omega}$ whether it can be extended into 
a cocycle $x_{10}+x_9+\dots+x_0\in (C_g, \delta_s+\delta_\omega)$. Unfortunately, this extension problem is non-trivial, and we could only provide a solution for specific types of $x_{10}$ in the previous section above, and these come from the summand $\Q\oplus \Q\alpha$ in \eqref{equ:motivation C10}. 
The idea underlying Theorem \ref{thm:emb 2} is to consider the summand $\Q \alpha\beta$ instead.
In that case the most natural approach turned out to not try to construct the $x_0,\dots,x_9$, but rather construct a cocycle $x_{12}+x_{11}\in (B_g, \delta_s+\delta_\omega)$ whose image under the map $\delta_\omega:B_g\to C_g$ is the required $x_{10}$. 
This is the idea of the construction of the present section.
The advantage is that we only need to consider two summands, $x_{12}$ and $x_{11}$, and they will have a more natural combinatorial form than $\delta_\omega(x_{12}+x_{11})$.

\subsection{Some combinatorial constructions}
Our map $E$ will be a linear combination of several pieces, that we shall introduce next.
First, for $k=0,1,\dots$ we consider the maps
\begin{gather*}
\Phi_k \colon \Sym(\GC_0) \to \sX \\
\Phi_k(\gamma_1\cdots \gamma_r) = 
\underbrace{\tau\cup \cdots\cup \tau}_{k} \cup \gamma_1^\omega \cup\cdots \cup\gamma_r^\omega 
=
\begin{tikzpicture}
    \node[int] (i) at (0,.5) {};
    \node (v1) at (-.5,-.2) {$\omega$};
    \node (v2) at (0,-.2) {$\omega$};
    \node (v3) at (.5,-.2) {$\omega$};
  \draw (i) edge (v1) edge (v2) edge (v3);
  \end{tikzpicture}
  \cdots 
  \begin{tikzpicture}
    \node[int] (i) at (0,.5) {};
    \node (v1) at (-.5,-.2) {$\omega$};
    \node (v2) at (0,-.2) {$\omega$};
    \node (v3) at (.5,-.2) {$\omega$};
  \draw (i) edge (v1) edge (v2) edge (v3);
  \end{tikzpicture}  
  \begin{tikzpicture}
    \node[ext] (i) at (0,.5) {$\gamma_1$};
    \node (v1) at (0,-.2) {$\omega$};
  \draw (i) edge (v1);
  \end{tikzpicture}
  \cdots 
  \begin{tikzpicture}
    \node[ext] (i) at (0,.5) {$\gamma_r$};
    \node (v1) at (0,-.2) {$\omega$};
  \draw (i) edge (v1);
  \end{tikzpicture}
  ,
\end{gather*}
with 
\[
\tau = 
\begin{tikzpicture}
    \node[int] (i) at (0,.5) {};
    \node (v1) at (-.5,-.2) {$\omega$};
    \node (v2) at (0,-.2) {$\omega$};
    \node (v3) at (.5,-.2) {$\omega$};
  \draw (i) edge (v1) edge (v2) edge (v3);
  \end{tikzpicture}
\]
the tripod graph.
Since the differential $\delta_s$ distributes over the operation $\cup$ as long as there are no $\epsilon$-legs we have that 
\begin{equation}\label{equ:Phik deltas}
\delta_s \Phi_k(\gamma_1\cdots \gamma_r)
= 
\sum_{j=1}^r (-1)^{|\gamma_1|+\cdots+|\gamma_{j-1}|} \Phi_k(\gamma_1\cdots (\delta\gamma_j)\cdots \gamma_r).
\end{equation}

On the other hand, we have 
\begin{equation}\label{equ:Phik deltaomega}
\resizebox{.92\hsize}{!}{
$
  \delta_\omega \Phi_k(\gamma_1\cdots \gamma_r)
  =
  3k\, 
  \begin{tikzpicture}
    \node[int] (i) at (0,.5) {};
    \node (v1) at (-.5,-.2) {$\epsilon$};
    \node (v2) at (0,-.2) {$\omega$};
    \node (v3) at (.5,-.2) {$\omega$};
  \draw (i) edge (v1) edge (v2) edge (v3);
  \end{tikzpicture}
  \cdots 
  \begin{tikzpicture}
    \node[int] (i) at (0,.5) {};
    \node (v1) at (-.5,-.2) {$\omega$};
    \node (v2) at (0,-.2) {$\omega$};
    \node (v3) at (.5,-.2) {$\omega$};
  \draw (i) edge (v1) edge (v2) edge (v3);
  \end{tikzpicture}  
  \begin{tikzpicture}
    \node[ext] (i) at (0,.5) {$\gamma_1$};
    \node (v1) at (0,-.2) {$\omega$};
  \draw (i) edge (v1);
  \end{tikzpicture}
  \cdots 
  \begin{tikzpicture}
    \node[ext] (i) at (0,.5) {$\gamma_r$};
    \node (v1) at (0,-.2) {$\omega$};
  \draw (i) edge (v1);
  \end{tikzpicture}
  \, +\, 
  \sum_{j=1}^r\pm
  \begin{tikzpicture}
    \node[int] (i) at (0,.5) {};
    \node (v1) at (-.5,-.2) {$\omega$};
    \node (v2) at (0,-.2) {$\omega$};
    \node (v3) at (.5,-.2) {$\omega$};
  \draw (i) edge (v1) edge (v2) edge (v3);
  \end{tikzpicture}
  \cdots 
  \begin{tikzpicture}
    \node[int] (i) at (0,.5) {};
    \node (v1) at (-.5,-.2) {$\omega$};
    \node (v2) at (0,-.2) {$\omega$};
    \node (v3) at (.5,-.2) {$\omega$};
  \draw (i) edge (v1) edge (v2) edge (v3);
  \end{tikzpicture}  
  \begin{tikzpicture}
    \node[ext] (i) at (0,.5) {$\gamma_1$};
    \node (v1) at (0,-.2) {$\omega$};
  \draw (i) edge (v1);
  \end{tikzpicture}
  \cdots
  \begin{tikzpicture}
    \node[ext] (i) at (0,.5) {$\gamma_j$};
    \node (v1) at (0,-.2) {$\epsilon$};
  \draw (i) edge (v1);
  \end{tikzpicture}
  \cdots 
  \begin{tikzpicture}
    \node[ext] (i) at (0,.5) {$\gamma_r$};
    \node (v1) at (0,-.2) {$\omega$};
  \draw (i) edge (v1);
  \end{tikzpicture}\, .
  $}
\end{equation}

We also define the similar operation 

\begin{gather*}
  \hat \Phi_k \colon \Sym(\GC_0) \to \sX \\
  \hat \Phi_k(\gamma_1\cdots \gamma_r) = 
  \begin{tikzpicture}
      \node[int] (i) at (0,.5) {};
      \node (v1) at (-1,-.2) {$\omega$};
      \node (v2) at (-.5,-.2) {$\omega$};
      \node (v3) at (0,-.2) {$\omega$};
      \node (v4) at (.5,-.2) {$\omega$};
      \node (v5) at (1,-.2) {$\omega$};
    \draw (i) edge (v1) edge (v2) edge (v3) edge (v4) edge (v5);
    \end{tikzpicture}
    \underbrace{
    \begin{tikzpicture}
      \node[int] (i) at (0,.5) {};
      \node (v1) at (-.5,-.2) {$\omega$};
      \node (v2) at (0,-.2) {$\omega$};
      \node (v3) at (.5,-.2) {$\omega$};
    \draw (i) edge (v1) edge (v2) edge (v3);
    \end{tikzpicture} 
    \cdots 
    \begin{tikzpicture}
      \node[int] (i) at (0,.5) {};
      \node (v1) at (-.5,-.2) {$\omega$};
      \node (v2) at (0,-.2) {$\omega$};
      \node (v3) at (.5,-.2) {$\omega$};
    \draw (i) edge (v1) edge (v2) edge (v3);
    \end{tikzpicture}  
    }_{k}
    \begin{tikzpicture}
      \node[ext] (i) at (0,.5) {$\gamma_1$};
      \node (v1) at (0,-.2) {$\omega$};
    \draw (i) edge (v1);
    \end{tikzpicture}
    \cdots 
    \begin{tikzpicture}
      \node[ext] (i) at (0,.5) {$\gamma_r$};
      \node (v1) at (0,-.2) {$\omega$};
    \draw (i) edge (v1);
    \end{tikzpicture}
    ,
  \end{gather*}
It satisfies 
\begin{equation}\label{equ:hatPhik deltas}
  \begin{aligned}
  &\delta_s \hat \Phi_k(\gamma_1\cdots \gamma_r)
  - 
  \sum_{j=1}^r (-1)^{|\gamma_1|+\cdots+|\gamma_{j-1}|} \hat \Phi_k(\gamma_1\cdots (\delta\gamma_j)\cdots \gamma_r)
  \\&= 10
  \begin{tikzpicture}
    \node[int] (i1) at (-.3,.5) {};
    \node[int] (i2) at (.3,.5) {};
    \node (v1) at (-1,-.2) {$\omega$};
    \node (v2) at (-.5,-.2) {$\omega$};
    \node (v3) at (0,-.2) {$\omega$};
    \node (v4) at (.5,-.2) {$\omega$};
    \node (v5) at (1,-.2) {$\omega$};
  \draw (i1) edge (v1) edge (v2) edge (v3) edge (i2) (i2) edge (v4) edge (v5);
  \end{tikzpicture}
  \underbrace{
  \begin{tikzpicture}
    \node[int] (i) at (0,.5) {};
    \node (v1) at (-.5,-.2) {$\omega$};
    \node (v2) at (0,-.2) {$\omega$};
    \node (v3) at (.5,-.2) {$\omega$};
  \draw (i) edge (v1) edge (v2) edge (v3);
  \end{tikzpicture} 
  \cdots 
  \begin{tikzpicture}
    \node[int] (i) at (0,.5) {};
    \node (v1) at (-.5,-.2) {$\omega$};
    \node (v2) at (0,-.2) {$\omega$};
    \node (v3) at (.5,-.2) {$\omega$};
  \draw (i) edge (v1) edge (v2) edge (v3);
  \end{tikzpicture}  
  }_{k}
  \begin{tikzpicture}
    \node[ext] (i) at (0,.5) {$\gamma_1$};
    \node (v1) at (0,-.2) {$\omega$};
  \draw (i) edge (v1);
  \end{tikzpicture}
  \cdots 
  \begin{tikzpicture}
    \node[ext] (i) at (0,.5) {$\gamma_r$};
    \node (v1) at (0,-.2) {$\omega$};
  \draw (i) edge (v1);
  \end{tikzpicture}
  .
  \end{aligned}
\end{equation}

Next we define the degree zero operation 
\begin{gather*}
  \Psi \colon \sX \to \sX \\
  \Gamma \mapsto \sum 
  \begin{tikzpicture}
  \node[ext] (v) at (0,0) {$\Gamma$};
  \node (v1) at (-.7,0) {$\omega$};
  \node (v2) at (0.7,0) {$\omega$};
  \draw (v) edge (v1) edge (v2);
  \end{tikzpicture},
  \end{gather*}
  where we sum over all ways of attaching two $\omega$-legs to the graph $\Gamma$, in the blown-up picture. This means that the half-edges are attached to an internal vertex, or become an $\epsilon$-leg. (This can be seen as attaching to the special vertex.)
  
  \begin{lemma}\label{lem:Psidelta}
  If a graph $\Gamma\in X$ does not contain any $\epsilon$-legs, then we have that 
  \[
    \delta_s \Psi(\Gamma)-\Psi(\delta_s \Gamma) 
    = 
    -
    \sum 
    \begin{tikzpicture}
    \node[ext] (v) at (0,0) {$\Gamma$};
    \node[int] (w) at (0,.7) {};
    \node (v1) at (-.7,.7) {$\omega$};
    \node (v2) at (0.7,.7) {$\omega$};
    \draw (w) edge (v1) edge (v2) edge (v);
    \end{tikzpicture}
    =: -\Xi(\Gamma),
  \]
  where on the right-hand side we again sum over all ways of attaching the leg to $\Gamma$, with the attachment to the special vertex being the same as introducing an $\epsilon$-marking at the leg.
  \end{lemma}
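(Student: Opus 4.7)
The plan is to prove the identity by directly comparing the generators appearing on each side of $\delta_s \Psi(\Gamma) - \Psi(\delta_s \Gamma) = -\Xi(\Gamma)$. Both sides are sums over graphs obtained from $\Gamma$ by combining two operations---attaching two $\omega$-legs (via $\Psi$) and splitting a vertex (via $\delta_s$)---performed in opposite orders. The strategy is to establish a sign-preserving bijection between most terms on the two sides, and then identify the residual unmatched terms on the left-hand side as precisely $-\Xi(\Gamma)$.

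For the bijection, I would fix a generator in $\delta_s \Psi(\Gamma)$ arising from the choice of attachment positions $p_1, p_2$ for the two new $\omega$-legs (each either an internal vertex of $\Gamma$ or the special vertex, corresponding to the ``become $\epsilon$-leg'' option), followed by the split of a vertex $u$ of the resulting graph $\Gamma[p_1, p_2]$. The matching term in $\Psi(\delta_s \Gamma)$ is obtained by first splitting $u$ in $\Gamma$ and then attaching the two legs at positions in the split graph corresponding to $p_1, p_2$. By careful bookkeeping of orientations, in the same spirit as the proof of Lemma~\ref{lem:circ well def}, this bijection preserves signs on all non-exceptional terms. Here one also decomposes $\delta_s = \delta_s^\bullet + \delta_s^\circ$ and verifies the bijection separately for splits of non-special versus special vertices.

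The exceptional terms, appearing only on the left-hand side, are precisely those in which the split isolates the two attached $\omega$-legs onto a newly created vertex $w$ whose only incident half-edges are the two $\omega$-legs together with the new edge from the split. Such a configuration is valid in $\delta_s \Psi(\Gamma)$ because the two $\omega$-legs supply the needed valence at $w$, but has no counterpart in $\Psi(\delta_s \Gamma)$: the corresponding split in $\delta_s \Gamma$ alone would create a vertex of valence one, which is forbidden in the graph complex. When $u$ is an internal vertex of $\Gamma$ carrying both attached $\omega$-legs, such splits produce $w$ with two $\omega$-legs attached via a new edge to the other side of the split, which is $u$ itself retaining all original half-edges; summing over $u$ recovers the ``internal'' contribution to $\Xi(\Gamma)$. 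The analogous exceptional splits $\delta_s^\circ$ of the special vertex arise when both legs were attached via $\Psi$'s ``become $\epsilon$-leg'' option and are then isolated onto $w$ with a new edge to the special vertex; in the blown-up picture this new edge is an $\epsilon$-leg on $w$, giving the remaining contribution to $\Xi(\Gamma)$. The overall sign of $-1$ arises from the convention that the new edge created by $\delta_s$ is placed first in the edge ordering, in contrast to its position in the natural orientation of $\Xi(\Gamma)$.

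The main obstacle will be the sign bookkeeping, particularly the relative order in the orientation of the two attached $\omega$-legs and the new edge from the split. The hypothesis that $\Gamma$ has no $\epsilon$-legs is used to rule out additional interaction terms in $\delta_s^\circ$ coming from the grouping of pre-existing $\epsilon$-legs of $\Gamma$ with the newly attached ones; without this hypothesis, the classification of exceptional splits would be more intricate and the identity would need to be modified.
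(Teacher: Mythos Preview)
Your proposal is correct and follows the same approach as the paper, whose proof is a single sentence referring the reader to Lemma~\ref{lem:some ops}. Your bijection-plus-residual analysis is exactly the content of that argument specialized to $\Psi$; one small refinement worth noting is that, as in Lemma~\ref{lem:some ops}, there are also intermediate exceptional splits where the new vertex carries one original half-edge together with one or two of the new $\omega$-legs (the analogue of the ``valence~2'' terms there), and these cancel in pairs by the mechanism at the end of the proof of Lemma~\ref{lem:circ well def} rather than via your main bijection.
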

  \begin{proof}
  The computation is similar to the proof of Lemma \ref{lem:some ops}. 
  \end{proof}

Furthermore, we use the pre-Lie product $\bullet$ on $\GC_0$. For $\gamma_1,\gamma_2\in \GC_0$
\[
  \gamma_1\bullet \gamma_2
  =
  \sum
  \begin{tikzpicture}
    \node[ext] (v) at (0,0.4) {$\gamma_1$};
    \node[ext] (w) at (0,-0.4) {$\gamma_2$};
    \draw (v) edge (w) (w) edge (v.west) edge (v.east);
    \end{tikzpicture}
    \in \GC_0,
\]
where the sum is over all ways of inserting $\gamma_2$ into a vertex of $\gamma_1$.
We shall only need to use the following property of $\bullet$, which is a special case of Proposition \ref{prop:pois trivial} (proved in  \cite{Willwachertrivial}):
\begin{equation} \label{eq:preLie eqn}
  \delta (\gamma_1\bullet \gamma_2)
  -
  (\delta \gamma_1)\bullet \gamma_2
  - (-1)^{|\gamma_1|}
   \gamma_1\bullet (\delta\gamma_2)
   =
   \{\gamma_1,\gamma_2\}.
\end{equation}

Finally, we define the operation 
\begin{gather*}
  \tilde \Phi_k \colon \Sym(\GC_0) \to \sX \\
  \tilde \Phi_k(\gamma_1\cdots \gamma_r) 
  =
  \sum_{j=1}^r
  \pm
  \begin{tikzpicture}
      \node[int] (i) at (0,.5) {};
      \node (v1) at (-.5,-.2) {$\omega$};
      \node (v2) at (0,-.2) {$\omega$};
      \node (v3) at (.5,-.2) {$\omega$};
    \draw (i) edge (v1) edge (v2) edge (v3);
    \end{tikzpicture}
    \cdots 
    \begin{tikzpicture}
      \node[int] (i) at (0,.5) {};
      \node (v1) at (-.5,-.2) {$\omega$};
      \node (v2) at (0,-.2) {$\omega$};
      \node (v3) at (.5,-.2) {$\omega$};
    \draw (i) edge (v1) edge (v2) edge (v3);
    \end{tikzpicture}  
    \begin{tikzpicture}
      \node[ext] (i) at (0,.5) {$\gamma_1$};
      \node (v1) at (0,-.2) {$\omega$};
    \draw (i) edge (v1);
    \end{tikzpicture}
    \cdots 
    \begin{tikzpicture}
      \node[ext] (i) at (0,.5) {$\gamma_j$};
      \node (v1) at (-.5,-.2) {$\omega$};
      \node (v2) at (0,-.2) {$\omega$};
      \node (v3) at (.5,-.2) {$\omega$};
    \draw (i) edge (v1) edge (v2) edge (v3);
    \end{tikzpicture}
    \cdots 
    \begin{tikzpicture}
      \node[ext] (i) at (0,.5) {$\gamma_r$};
      \node (v1) at (0,-.2) {$\omega$};
    \draw (i) edge (v1);
    \end{tikzpicture}\, .
  \end{gather*}
In other words, $\tilde \Phi_k$ is defined similarly to $\Phi_k$, except that we attach 3 $\omega$-legs to one of the $\gamma_j$ instead of one.
\begin{lemma}
The expression  $\delta_s \tilde\Phi_k(\gamma_1\cdots \gamma_r)
  -
  \sum_{j=1}^r (-1)^{|\gamma_1|+\cdots+|\gamma_{j-1}|} \tilde\Phi_k(\gamma_1\cdots (\delta\gamma_j)\cdots \gamma_r)$ equals:
\begin{equation}\label{equ:tPhik deltas}
 \resizebox{.92\hsize}{!}{$
  \begin{aligned}
  \sum_{j=1}^r
  \pm
  \begin{tikzpicture}
      \node[int] (i) at (0,.5) {};
      \node (v1) at (-.5,-.2) {$\omega$};
      \node (v2) at (0,-.2) {$\omega$};
      \node (v3) at (.5,-.2) {$\omega$};
    \draw (i) edge (v1) edge (v2) edge (v3);
    \end{tikzpicture}
    \cdots 
    \begin{tikzpicture}
      \node[int] (i) at (0,.5) {};
      \node (v1) at (-.5,-.2) {$\omega$};
      \node (v2) at (0,-.2) {$\omega$};
      \node (v3) at (.5,-.2) {$\omega$};
    \draw (i) edge (v1) edge (v2) edge (v3);
    \end{tikzpicture}  
    \begin{tikzpicture}
      \node[ext] (i) at (0,.5) {$\gamma_1$};
      \node (v1) at (0,-.2) {$\omega$};
    \draw (i) edge (v1);
    \end{tikzpicture}
    \cdots 
    \begin{tikzpicture}
      \node[ext] (e) at (0,1.2) {$\gamma_j$};
      \node[int] (i) at (0,.5) {};
      \node (v1) at (-.5,-.2) {$\omega$};
      \node (v2) at (0,-.2) {$\omega$};
      \node (v3) at (.5,-.2) {$\omega$};
    \draw (i) edge (v1) edge (v2) edge (v3) edge (e);
    \end{tikzpicture}
    \cdots 
    \begin{tikzpicture}
      \node[ext] (i) at (0,.5) {$\gamma_r$};
      \node (v1) at (0,-.2) {$\omega$};
    \draw (i) edge (v1);
    \end{tikzpicture}
    +
    3
    \begin{tikzpicture}
      \node[int] (i) at (0,.5) {};
      \node (v1) at (-.5,-.2) {$\omega$};
      \node (v2) at (0,-.2) {$\omega$};
      \node (v3) at (.5,-.2) {$\omega$};
    \draw (i) edge (v1) edge (v2) edge (v3);
    \end{tikzpicture}
    \cdots 
    \begin{tikzpicture}
      \node[int] (i) at (0,.5) {};
      \node (v1) at (-.5,-.2) {$\omega$};
      \node (v2) at (0,-.2) {$\omega$};
      \node (v3) at (.5,-.2) {$\omega$};
    \draw (i) edge (v1) edge (v2) edge (v3);
    \end{tikzpicture}  
    \begin{tikzpicture}
      \node[ext] (i) at (0,.5) {$\gamma_1$};
      \node (v1) at (0,-.2) {$\omega$};
    \draw (i) edge (v1);
    \end{tikzpicture}
    \cdots 
    \begin{tikzpicture}
      \node[ext] (e) at (0,1.2) {$\gamma_j$};
      \node[int] (i) at (0,.5) {};
      \node (v1) at (-.5,-.2) {$\omega$};
      \node (v2) at (0,-.2) {$\omega$};
      \node (v3) at (.5,-.2) {$\omega$};
    \draw (i) edge (v1) edge (v2) edge (e) (e) edge (v3);
    \end{tikzpicture}
    \cdots 
    \begin{tikzpicture}
      \node[ext] (i) at (0,.5) {$\gamma_r$};
      \node (v1) at (0,-.2) {$\omega$};
    \draw (i) edge (v1);
    \end{tikzpicture}
    \, .
  \end{aligned}
  $}
\end{equation}
\end{lemma}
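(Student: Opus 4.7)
The plan is a direct, mechanical expansion of $\delta_s$ applied to $\tilde\Phi_k(\gamma_1\cdots\gamma_r)$, modeled closely on the derivations of \eqref{equ:Phik deltas} and \eqref{equ:hatPhik deltas}. In the blown-up picture, $\tilde\Phi_k(\gamma_1\cdots\gamma_r)$ is a disjoint union of $k$ tripods, the factors $\gamma_i^\omega$ for $i\neq j$, and one distinguished factor consisting of $\gamma_j$ with three $\omega$-legs attached at a single vertex $v$ (summed over $v\in V(\gamma_j)$). The differential acts by summing over all admissible vertex splittings in this graph, and the strategy is to match each such splitting either against a term in $\sum_j \pm\tilde\Phi_k(\gamma_1\cdots(\delta\gamma_j)\cdots\gamma_r)$ or against one of the two leftover graph types on the RHS of \eqref{equ:tPhik deltas}.

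First I would dispose of the easy cases. Splittings of tripod vertices vanish by valence considerations, exactly as in \eqref{equ:Phik deltas}. Splittings of internal vertices of $\gamma_i^\omega$ for $i\neq j$ that do not touch the $\omega$-attachment vertex match the corresponding piece of $\tilde\Phi_k(\gamma_1\cdots(\delta\gamma_i)\cdots\gamma_r)$, and the splittings at the $\omega$-attachment vertex of $\gamma_i^\omega$ cancel in pairs by the same swap-symmetry argument underlying \eqref{equ:Phik deltas} (as they did for $\Phi_k$). Similarly, splittings of internal vertices of the distinguished $\gamma_j$ that leave the attachment vertex $v$ untouched match the terms of $\tilde\Phi_k(\gamma_1\cdots(\delta\gamma_j)\cdots\gamma_r)$ in which the three $\omega$-legs remain attached at the (unsplit) $v$.

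The interesting case is the splitting of $v$ itself: one must distribute $v$'s $d\ge 3$ original half-edges together with the three $\omega$-legs between the two new vertices $w',w''$ connected by the new edge. I would organize this by the number $i\in\{0,1,2,3\}$ of $\omega$-legs that go to $w'$. When $i\in\{0,3\}$ and both sides inherit at least two original half-edges, the resulting graph corresponds exactly to a valid split of $v$ in $\gamma_j$ followed by reattaching all three $\omega$-legs at $w'$ or $w''$, and matches the corresponding term of $\tilde\Phi_k(\gamma_1\cdots(\delta\gamma_j)\cdots\gamma_r)$. The boundary subcase where all three $\omega$-legs concentrate on $w'$ and no original half-edge of $v$ goes to $w'$ corresponds to an invalid $\gamma_j$-split and produces the first leftover graph on the RHS of \eqref{equ:tPhik deltas}, summed over $v\in V(\gamma_j)$. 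The subcase $i\in\{1,2\}$ with zero original half-edges on the side carrying two $\omega$-legs yields the second leftover graph with multiplicity $\binom{3}{2}=3$ from the choice of the singleton $\omega$-leg. Any remaining splits with $i\in\{1,2\}$ and a nonzero number of original half-edges distributed in an ``unbalanced'' way either pair up and cancel via the swap $w'\leftrightarrow w''$, or are excluded by valence constraints.

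The main obstacle is the sign bookkeeping: one must carefully track the convention that the new edge comes first in the ordering, the orientation signs produced by swaps of $\omega$-legs, and the interaction of the swap $w'\leftrightarrow w''$ with the relative order of edges and distinguished half-edges. All signs are determined mechanically once the splitting convention is fixed, so the verification is entirely routine, if tedious; the substantive content of the lemma is just the classification of splits of $v$ summarized above.
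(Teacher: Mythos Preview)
Your overall strategy---a direct graphical bookkeeping of vertex splits, matching terms against $\tilde\Phi_k(\ldots(\delta\gamma_j)\ldots)$ and isolating the leftovers---is exactly what the paper has in mind (it simply says ``a similar graphical computation; details omitted''). However, your reading of the definition of $\tilde\Phi_k$ is wrong, and this makes the case analysis for the distinguished factor incorrect.

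The three $\omega$-legs on $\gamma_j$ are \emph{not} attached at a single vertex $v$. Consistent with the pictorial convention used throughout \S\ref{sec:first inj}--\ref{sec:second inj} (cf.\ the definitions of $\gamma^\omega$, $\Psi$, $\Xi$, $\{\cdot,\cdot\}$), each leg drawn from the blob $\gamma_j$ is attached at an independently chosen internal vertex, and one sums over all such choices. This reading is forced by how the lemma is later used: in the proof of Lemma~\ref{lem:Ek differential} the term $Z_2$ must cancel $\Xi_2$, and $\Xi_2$ has the $\omega$-leg and the $(\omega\omega)$-tripod attached to $\gamma_j$ at \emph{independent} vertices.

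Under your single-vertex reading, the cancellation you assert for the $i\in\{1,2\}$ subcases fails. When two $\omega$-legs go to $w'$ together with at least one original half-edge, there is no companion term from splitting another vertex (all three $\omega$-legs live at $v$ in your picture), and the swap $w'\leftrightarrow w''$ does not introduce a sign: the isomorphism fixes every edge and every distinguished half-edge, so the two ``paired'' terms add rather than cancel. You would also only recover the diagonal part of $Z_2$ (where the surviving $\omega$-leg sits at the same vertex as the tripod attachment), not the full $Z_2$.

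With the correct interpretation the computation is cleaner. Writing $A$ for ``add one $\omega$-leg, summed over internal vertices'', one checks (by the same edge-swap argument used for \eqref{equ:Phik deltas}) that on graphs with only $\omega$-legs the commutator $[\delta_s,A]$ inserts a trivalent $(\omega,\omega,\text{edge})$-vertex on an existing $\omega$-leg, summed over such legs. Since $\gamma_j$ has no legs, $[\delta_s,A]\gamma_j=0$, and
\[
[\delta_s,A^3]\gamma_j \;=\; [\delta_s,A]\,A^2\gamma_j \;+\; A\,[\delta_s,A]\,A\gamma_j.
\]
The first summand tripodizes one of the two $\omega$-legs of $A^2\gamma_j$ and contributes $2\cdot Z_2$; in the second summand, the new trivalent vertex can absorb the final $\omega$-leg (giving $Z_1$) or the leg lands on $\gamma_j$ (giving one more $Z_2$). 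This yields exactly $Z_1+3Z_2$, which is \eqref{equ:tPhik deltas}. Once you replace your single-vertex picture by this one, the rest of your outline goes through.
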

\begin{proof}
The verification is a similar graphical computation to those above. We omit the details. 
\end{proof}

\subsection{Definition of the map $E$}
We then define the maps of graded vector spaces
\begin{gather*}
    E_k \colon \Sym(\GC_0) \to \sX \\
    \gamma_1\cdots \gamma_r \mapsto 
    \Phi_k(\gamma_1\cdots \gamma_r )
    +
    k \tilde \Phi_{k-1}(\gamma_1\cdots \gamma_r )
    +
    \frac{3k}{10} \hat \Phi_{k-1}(\gamma_1\cdots \gamma_r )
    +
    3k \Psi( \Phi_{k-1}(\gamma_1\cdots \gamma_r ))
    \\-
    \sum_{i=1}^r \gamma_i^1 \circ \Phi_k(\gamma_1\cdots \hat \gamma_i \cdots \gamma_r )
    + 
    \sum_{i<j}\pm
    \Phi_k(\gamma_1 \cdots (\gamma_i \bullet \gamma_j) \cdots \gamma_r)
    .
\end{gather*}
The map $E_k$ is not a morphism of dg vector spaces; it does not commute with the differentials.
However, we have the following result:
\begin{lemma}\label{lem:Ek differential}
  For $\gamma_1, \dots, \gamma_r\in \GC_0,$ the commutator of $E_k$ with the differential
  \[
    \delta E_k(\gamma_1\cdots \gamma_r)
    -\sum_j\pm
    E_k(\gamma_1\cdots \delta\gamma_j\cdots \gamma_r)
  \]
  is a linear combination of graphs with at most $r+3k-2$ legs decorated by $\omega$.
\end{lemma}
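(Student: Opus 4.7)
My approach is a direct term-by-term computation. I would expand both $\delta E_k(\gamma_1\cdots\gamma_r)$ and the ``Leibniz error'' $\sum_j \pm E_k(\gamma_1\cdots\delta\gamma_j\cdots\gamma_r)$ using each of the six summands in the definition of $E_k$, then collect the resulting graphs by their $\omega$-leg count, and verify that everything with $\omega$-count exceeding $r+3k-2$ cancels. At the top level $r+3k$, only the $\Phi_k$ piece contributes and the identity \eqref{equ:Phik deltas} furnishes the cancellation immediately. All remaining work is at $\omega$-count $r+3k-1$, where many families of graphs interact.

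At $\omega$-count $r+3k-1$ the relevant contributions are: (a) $\delta_\omega\Phi_k$, expanded by \eqref{equ:Phik deltaomega} as $3k\cdot(\text{tripod-}\epsilon)+\sum_j\pm(\gamma_j^\epsilon\text{-substitution})$; (b) $k$ times the commutator of $\tilde\Phi_{k-1}$, whose two families are described by \eqref{equ:tPhik deltas}; (c) $\frac{3k}{10}$ times the commutator of $\hat\Phi_{k-1}$, computed by \eqref{equ:hatPhik deltas}; (d) $3k$ times the commutator of $\Psi(\Phi_{k-1})$, which by Lemma \ref{lem:Psidelta} and \eqref{equ:Phik deltas} equals $-3k\,\Xi(\Phi_{k-1})$; (e) the commutator of $-\sum_i\gamma_i^1\circ\Phi_k(\cdots\hat\gamma_i\cdots)$, computed via Lemma \ref{lem:circ well def} and \eqref{equ:deltagamma1} (producing $\gamma_i^\circ$-compositions plus $(\delta\gamma_i)^1$-corrections that match the Leibniz term); and (f) the commutator of $\sum_{i<j}\pm\Phi_k((\gamma_i\bullet\gamma_j)\cdots)$, which by the pre-Lie identity \eqref{eq:preLie eqn} replaces the pre-Lie product with the Lie bracket $\{\gamma_i,\gamma_j\}$. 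I plan to arrange these into four matchings: the tripod-$\epsilon$ term from (a) cancels against the piece of (d) in which $\Xi$ attaches to a tripod vertex, together with the complementary $(3,2)$-split contribution from (c); the $\gamma_j^\epsilon$ substitutions from (a) cancel against the first family on the right-hand side of \eqref{equ:tPhik deltas}; the second family in \eqref{equ:tPhik deltas} matches the $\gamma_j$-attached piece of (d) together with the $\gamma_j^\circ$-composition piece of (e); and finally the Lie-bracket residue in (f) absorbs the antisymmetrization coming from (e) via \eqref{equ:poisbra1}.

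The main obstacle is combinatorial bookkeeping rather than anything conceptual: each of the families (a)--(f) decomposes into several diagrammatic sub-families, each carrying signs dictated by the Koszul rule combined with the edge/half-edge ordering conventions of \S\ref{sec:truncation}, and multiplicities reflecting the number of isomorphic ways each graph can arise. The numerical coefficients $k$, $3k$, and $\frac{3k}{10}$ in the definition of $E_k$ are forced by these multiplicities---in particular, the factor $\tfrac{1}{10}$ compensates for the ten ways of partitioning the five $\omega$-legs of the $5$-valent vertex in \eqref{equ:hatPhik deltas} into a $(3,2)$-split, while the factor $3$ tracks the three identical $\omega$-legs in each tripod. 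Verifying that all signs and multiplicities align consistently across the six families is the lengthy but routine graphical computation that completes the proof; any miscount of a single multiplicity or sign in any of the matchings would leave a residue of $\omega$-count $r+3k-1$, breaking the bound claimed in the lemma.
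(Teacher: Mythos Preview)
Your overall strategy matches the paper's proof exactly: isolate the $\omega$-count $r+3k$ piece (handled by \eqref{equ:Phik deltas}), then show that the six families at $\omega$-count $r+3k-1$ cancel pairwise. However, your proposed pairings are scrambled, and as stated the cancellations would fail.

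The contribution $-3k\,\Xi(\Phi_{k-1})$ from (d) splits into three pieces according to where the new leg of $\Xi$ lands: $\Xi_1$ where it becomes an $\epsilon$-leg (producing a new $(\epsilon,\omega,\omega)$-tripod component), $\Xi_2$ where it attaches to a vertex of some $\gamma_j$, and $\Xi_3$ where it attaches to a tripod vertex (producing the $(3,2)$-split pentapod). Similarly, $-\sum_i \gamma_i^\circ\circ\Phi_k(\cdots\hat\gamma_i\cdots)$ from (e) splits as $Y_1+Y_2+Y_3$ according to whether $\gamma_i$'s foot becomes $\epsilon$, lands on some $\gamma_j$, or lands on a tripod. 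With this notation, the correct pairings are: $\Xi_1$ against the tripod-$\epsilon$ term of (a); $\Xi_3$ against (c); $Y_1$ against the $\gamma_j^\epsilon$-substitution term of (a); $Y_2$ against (f) via the pre-Lie identity \eqref{eq:preLie eqn} (not \eqref{equ:poisbra1}); $Y_3$ against the first family $Z_1$ of \eqref{equ:tPhik deltas}; and $\Xi_2$ against the second family $Z_2$.

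Your matching (2), for instance, pairs the $\gamma_j^\epsilon$-term of (a) with $Z_1$; but the former has $\gamma_j$ carrying a single $\epsilon$-leg while the latter has $\gamma_j$ joined by a structural edge to a new $3$-$\omega$ vertex---these are combinatorially distinct graphs and cannot cancel. Likewise your matching (1) conflates $\Xi_1$ with $\Xi_3$, and your scheme never accounts for $\Xi_1$ or $Y_1$ at all. Once the pairings are corrected as above, the verification of coefficients and signs proceeds exactly as you outline.
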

\begin{proof}
First note that $\Phi_k(\gamma_1\cdots \gamma_r)$ is the only term in the definition of $E_k$ that has $r+3k$ $\omega$-legs. The remaining terms, call them $X(\gamma_1\cdots \gamma_r)$ temporarily, all have $r+3k-1$ many $\omega$-legs.
Given \eqref{equ:Phik deltas} the assertion of the lemma hence is equivalent to the statement that 
\begin{equation}\label{equ:Ek differential tbs}
  [\delta_s,X](\gamma_1\cdots \gamma_r)
  :=
\delta_s X(\gamma_1\cdots \gamma_r)
-
\sum_j\pm
    X(\gamma_1\cdots \delta\gamma_j\cdots \gamma_r)
  = -\delta_\omega \Phi_k(\gamma_1\cdots \gamma_r).
\end{equation}
To show this we investigate the terms contributing to $[\delta_s,X]$ separately.
First, by Lemma~\ref{lem:Psidelta}
\[
  [\delta_s, \Psi\circ \Phi_{k-1}](\gamma_1\cdots \gamma_r)
  =
  -\Xi(\Phi_{k-1}(\gamma_1\cdots \gamma_r)).
\]
The terms contributing to $\Xi(\Phi_{k-1}(\gamma_1\cdots \gamma_r))$ are of three sorts: ($\Xi_1$) the terms for which there is a new $\epsilon$-leg and ($\Xi_2$) the terms for which the new leg is connected to a vertex of one $\gamma_j$ and ($\Xi_3$) the terms for which the new leg is connected to a vertex of another tripod.
\begin{align*}
  \Xi_1&= 
  \begin{tikzpicture}
    \node[int] (i) at (0,.5) {};
    \node (v1) at (-.5,-.2) {$\epsilon$};
    \node (v2) at (0,-.2) {$\omega$};
    \node (v3) at (.5,-.2) {$\omega$};
  \draw (i) edge (v1) edge (v2) edge (v3);
  \end{tikzpicture}
  \begin{tikzpicture}
    \node[int] (i) at (0,.5) {};
    \node (v1) at (-.5,-.2) {$\omega$};
    \node (v2) at (0,-.2) {$\omega$};
    \node (v3) at (.5,-.2) {$\omega$};
  \draw (i) edge (v1) edge (v2) edge (v3);
  \end{tikzpicture}
  \cdots 
  \begin{tikzpicture}
    \node[int] (i) at (0,.5) {};
    \node (v1) at (-.5,-.2) {$\omega$};
    \node (v2) at (0,-.2) {$\omega$};
    \node (v3) at (.5,-.2) {$\omega$};
  \draw (i) edge (v1) edge (v2) edge (v3);
  \end{tikzpicture}  
  \begin{tikzpicture}
    \node[ext] (i) at (0,.5) {$\gamma_1$};
    \node (v1) at (0,-.2) {$\omega$};
  \draw (i) edge (v1);
  \end{tikzpicture}
  \cdots 
  \begin{tikzpicture}
    \node[ext] (i) at (0,.5) {$\gamma_r$};
    \node (v1) at (0,-.2) {$\omega$};
  \draw (i) edge (v1);
  \end{tikzpicture}
  \\
  \Xi_2&=
  \sum
  \begin{tikzpicture}
    \node[int] (i) at (0,.5) {};
    \node (v1) at (-.5,-.2) {$\omega$};
    \node (v2) at (0,-.2) {$\omega$};
    \node (v3) at (.5,-.2) {$\omega$};
  \draw (i) edge (v1) edge (v2) edge (v3);
  \end{tikzpicture}
  \cdots 
  \begin{tikzpicture}
    \node[int] (i) at (0,.5) {};
    \node (v1) at (-.5,-.2) {$\omega$};
    \node (v2) at (0,-.2) {$\omega$};
    \node (v3) at (.5,-.2) {$\omega$};
  \draw (i) edge (v1) edge (v2) edge (v3);
  \end{tikzpicture}  
  \begin{tikzpicture}
    \node[ext] (i) at (0,.5) {$\gamma_1$};
    \node (v1) at (0,-.2) {$\omega$};
  \draw (i) edge (v1);
  \end{tikzpicture}
  \cdots 
  \begin{tikzpicture}
    \node[ext] (i) at (0,.5) {$\gamma_j$};
    \node (v1) at (0,-.2) {$\omega$};
  \draw (i) edge (v1);
  \end{tikzpicture}
  \cdots
  \begin{tikzpicture}
    \node[int] (w) at (0,1) {};
    \node (w1) at (-.7,1) {$\omega$};
    \node (w2) at (0.7,1) {$\omega$};
    \node[ext] (i) at (0,.5) {$\gamma_r$};
    \node (v1) at (0,-.2) {$\omega$};
  \draw (i) edge (v1) edge (w)
  (w) edge (w1) edge (w2);
  \end{tikzpicture}
\\
\Xi_3&=(k-1)
\begin{tikzpicture}
  \node[int] (i1) at (-.3,.5) {};
  \node[int] (i2) at (.3,.5) {};
  \node (v1) at (-1,-.2) {$\omega$};
  \node (v2) at (-.5,-.2) {$\omega$};
  \node (v3) at (0,-.2) {$\omega$};
  \node (v4) at (.5,-.2) {$\omega$};
  \node (v5) at (1,-.2) {$\omega$};
\draw (i1) edge (v1) edge (v2) edge (v3) edge (i2) (i2) edge (v4) edge (v5);
\end{tikzpicture}
\begin{tikzpicture}
  \node[int] (i) at (0,.5) {};
  \node (v1) at (-.5,-.2) {$\omega$};
  \node (v2) at (0,-.2) {$\omega$};
  \node (v3) at (.5,-.2) {$\omega$};
\draw (i) edge (v1) edge (v2) edge (v3);
\end{tikzpicture}
\cdots 
\begin{tikzpicture}
  \node[int] (i) at (0,.5) {};
  \node (v1) at (-.5,-.2) {$\omega$};
  \node (v2) at (0,-.2) {$\omega$};
  \node (v3) at (.5,-.2) {$\omega$};
\draw (i) edge (v1) edge (v2) edge (v3);
\end{tikzpicture}  
\begin{tikzpicture}
  \node[ext] (i) at (0,.5) {$\gamma_1$};
  \node (v1) at (0,-.2) {$\omega$};
\draw (i) edge (v1);
\end{tikzpicture}
\cdots 
\begin{tikzpicture}
  \node[ext] (i) at (0,.5) {$\gamma_r$};
  \node (v1) at (0,-.2) {$\omega$};
\draw (i) edge (v1);
\end{tikzpicture}
\end{align*} 

The terms $\Xi_1$ cancel the left-hand terms of $\delta\Phi_k(\cdots)$ in \eqref{equ:Phik deltaomega}, in which one $\epsilon$ is put on a tripod leg.
The terms $\Xi_3$ cancel with the terms $[\delta_s, \hat \Phi_{k-1}](\gamma_1\cdots \gamma_r)$ by \eqref{equ:hatPhik deltas}.

Next denote temporarily 
\[
  Y(\gamma_1\cdots\gamma_r):=
  \sum_{i=1}^r \gamma_i^1 \circ \Phi_k(\gamma_1\cdots \hat \gamma_i \cdots \gamma_r ).
\]
Then we use \eqref{equ:deltagamma1} to compute that 
\[
  [\delta_s,Y](\gamma_1\cdots\gamma_r)
  = 
  \sum_{i=1}^r \gamma_i^\circ \circ \Phi_k(\gamma_1\cdots \hat \gamma_i \cdots \gamma_r ).
\]
The terms on the right-hand side may be again split into terms ($Y_1$) in which $\gamma_i$ is attached to an $\epsilon$-leg and terms ($Y_2$) for which $\gamma_i$ is attached to a vertex of some other $\gamma_j$ and ($Y_3$) terms for which $\gamma_i$ is attached to a vertex of some tripod.
\begin{align*}
  Y_1&=\sum_i 
\begin{tikzpicture}
  \node[int] (i) at (0,.5) {};
  \node (v1) at (-.5,-.2) {$\omega$};
  \node (v2) at (0,-.2) {$\omega$};
  \node (v3) at (.5,-.2) {$\omega$};
\draw (i) edge (v1) edge (v2) edge (v3);
\end{tikzpicture}
\cdots 
\begin{tikzpicture}
  \node[int] (i) at (0,.5) {};
  \node (v1) at (-.5,-.2) {$\omega$};
  \node (v2) at (0,-.2) {$\omega$};
  \node (v3) at (.5,-.2) {$\omega$};
\draw (i) edge (v1) edge (v2) edge (v3);
\end{tikzpicture}  
\begin{tikzpicture}
  \node[ext] (i) at (0,.5) {$\gamma_1$};
  \node (v1) at (0,-.2) {$\omega$};
\draw (i) edge (v1);
\end{tikzpicture}
\cdots
\begin{tikzpicture}
  \node[ext] (i) at (0,.5) {$\gamma_i$};
  \node (v1) at (0,-.2) {$\epsilon$};
\draw (i) edge (v1);
\end{tikzpicture}
\cdots
\begin{tikzpicture}
  \node[ext] (i) at (0,.5) {$\gamma_r$};
  \node (v1) at (0,-.2) {$\omega$};
\draw (i) edge (v1);
\end{tikzpicture}
\\
Y_2&=\sum_{i,j} 
\begin{tikzpicture}
  \node[int] (i) at (0,.5) {};
  \node (v1) at (-.5,-.2) {$\omega$};
  \node (v2) at (0,-.2) {$\omega$};
  \node (v3) at (.5,-.2) {$\omega$};
\draw (i) edge (v1) edge (v2) edge (v3);
\end{tikzpicture}
\cdots 
\begin{tikzpicture}
  \node[int] (i) at (0,.5) {};
  \node (v1) at (-.5,-.2) {$\omega$};
  \node (v2) at (0,-.2) {$\omega$};
  \node (v3) at (.5,-.2) {$\omega$};
\draw (i) edge (v1) edge (v2) edge (v3);
\end{tikzpicture}  
\begin{tikzpicture}
  \node[ext] (i) at (0,.5) {$\gamma_1$};
  \node (v1) at (0,-.2) {$\omega$};
\draw (i) edge (v1);
\end{tikzpicture}
\cdots
\begin{tikzpicture}
  \node[ext] (i) at (0,1.3) {$\gamma_i$};
  \node[ext] (j) at (0,.5) {$\gamma_j$};
  \node (v1) at (0,-.2) {$\omega$};
\draw (j) edge (v1) edge (i);
\end{tikzpicture}
\cdots
\begin{tikzpicture}
  \node[ext] (i) at (0,.5) {$\gamma_r$};
  \node (v1) at (0,-.2) {$\omega$};
\draw (i) edge (v1);
\end{tikzpicture}
\\
Y_3&=\sum_{i} 
\begin{tikzpicture}
  \node[ext] (ii) at (0,1) {$\gamma_i$};
  \node[int] (i) at (0,.5) {};
  \node (v1) at (-.5,-.2) {$\omega$};
  \node (v2) at (0,-.2) {$\omega$};
  \node (v3) at (.5,-.2) {$\omega$};
\draw (i) edge (v1) edge (v2) edge (v3) edge (ii);
\end{tikzpicture}
\begin{tikzpicture}
  \node[int] (i) at (0,.5) {};
  \node (v1) at (-.5,-.2) {$\omega$};
  \node (v2) at (0,-.2) {$\omega$};
  \node (v3) at (.5,-.2) {$\omega$};
\draw (i) edge (v1) edge (v2) edge (v3);
\end{tikzpicture} 
\cdots 
\begin{tikzpicture}
  \node[int] (i) at (0,.5) {};
  \node (v1) at (-.5,-.2) {$\omega$};
  \node (v2) at (0,-.2) {$\omega$};
  \node (v3) at (.5,-.2) {$\omega$};
\draw (i) edge (v1) edge (v2) edge (v3);
\end{tikzpicture}  
\begin{tikzpicture}
  \node[ext] (i) at (0,.5) {$\gamma_1$};
  \node (v1) at (0,-.2) {$\omega$};
\draw (i) edge (v1);
\end{tikzpicture}
\cdots
\begin{tikzpicture}
  \node[ext] (i) at (0,.5) {$\gamma_r$};
  \node (v1) at (0,-.2) {$\omega$};
\draw (i) edge (v1);
\end{tikzpicture}
\end{align*} 
The terms $Y_1$ cancel the remaining terms of $\delta\Phi_k(\cdots)$, see \eqref{equ:Phik deltaomega}, in which one $\epsilon$-leg is attached to $\gamma_i$.
The terms $Y_2$ cancel the terms arising from the commutator  of $\sum_{i<j}
\Phi_k(\gamma_1 \cdots (\gamma_i \bullet \gamma_j) \cdots \gamma_r)$ with the differential by \eqref{eq:preLie eqn}.

The commutator $Z:=[\delta_s,\tilde \Phi_k](\gamma_1\cdots\gamma_r)$ is computed in \eqref{equ:tPhik deltas} and we denote by $Z_1$ the first summand and by $Z_2$ the second summand on the right-hand side of \eqref{equ:tPhik deltas}.
Then the second summand $Z_2$ cancels the terms $\Xi_2$ above. At the same time the terms $Z_1$ cancel the terms $Y_3$ above.
Thus all terms have been taken care of and \eqref{equ:Ek differential tbs} and the lemma is proved.
\end{proof}

As an immediate consequence we can finally define the map $E$ of Theorem \ref{thm:emb 2}.

\begin{corollary}\label{cor:E definition}
The map 
\[
E \colon 
\Sym^{9}(\GC_0)^{(g-3)}[-22]
\oplus
\Sym^{6}(\GC_0)^{(g-5)}[-22]
\oplus
\Sym^{3}(\GC_0)^{(g-7)}[-22]
\to
B_g
\]
defined such that 
\[
(\gamma_1\cdots \gamma_9) \oplus (\mu_1\cdots \mu_6) \oplus (\nu_1\nu_2\nu_3)
\mapsto 
\pi \left(  E_1(\gamma_1\cdots \gamma_9) + E_2(\mu_1\cdots \mu_6)  +  E_3(\nu_1\nu_2\nu_3) \right) \in B_g\,,
\]
with $\pi \colon X_g[-22]\to B_g$ the projection, is a map of dg vector spaces.
\end{corollary}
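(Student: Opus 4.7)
The plan is to reduce the corollary to Lemma~\ref{lem:Ek differential} together with the fact that $B_g$ is defined as a quotient of $X_g[-22]$ that annihilates graphs with at most ten $\omega$-legs.

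First I would confirm that $E_k(\gamma_1\cdots\gamma_r)$ lies in $X_g$ with genus $g$ in each of the three input summands. By the additivity of genera across blown-up components from Lemma~\ref{lem:excess}, and using that each tripod has genus contribution $2$ while each $\gamma_i^\omega$ has genus contribution $h^1(\gamma_i)$, the dominant term $\Phi_k(\gamma_1\cdots\gamma_r)$ has total genus $2k+\sum_i h^1(\gamma_i)+1$. Evaluating this for $(r,k)\in\{(9,1),(6,2),(3,3)\}$ applied to $\Sym^r(\GC_0)$ with loop orders $g-3,\, g-5,\, g-7$ respectively yields $g$ in each case. The remaining terms $\tilde\Phi_{k-1}$, $\hat\Phi_{k-1}$, $\Psi\circ\Phi_{k-1}$, $\gamma_i^1\circ\Phi_k(\cdots)$, and $\Phi_k(\cdots(\gamma_i\bullet\gamma_j)\cdots)$ arise from $\Phi$ by operations (inserting a tripod, leg attachment via $\Psi$, operadic insertion $\circ$, pre-Lie composition $\bullet$) that I would check preserve the total genus. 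Moreover, $\Phi_k(\gamma_1\cdots\gamma_r)$ has $r+3k=12$ $\omega$-legs while all other summands have $r+3k-1=11$, so every term survives projection to $B_g$.

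Second, to show $\pi\circ E_k$ intertwines the differentials, the decisive input is Lemma~\ref{lem:Ek differential}: the commutator $\delta E_k(\gamma_1\cdots\gamma_r)-\sum_j(\pm)E_k(\gamma_1\cdots\delta\gamma_j\cdots\gamma_r)$ is supported on graphs with at most $r+3k-2$ $\omega$-legs. In each of the three cases under consideration this bound is exactly $10$. Since $B_g=(X_g/\widetilde C_g)[-22]$ and $\widetilde C_g$ consists precisely of graphs with $\leq 10$ $\omega$-legs, the projection $\pi$ annihilates this commutator. Combined with $\pi\delta=\delta\pi$, this yields $\delta(\pi E_k)=(\pi E_k)\delta$, and hence $E=\pi\circ(E_1\oplus E_2\oplus E_3)$ is a map of dg vector spaces.

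The main (and essentially only) obstacle is the algebraic identity of Lemma~\ref{lem:Ek differential}, which is what dictates the specific coefficients appearing in the definition of $E_k$; it has already been established by direct graphical computation. Granting that lemma, the corollary follows from the arithmetic coincidences $r+3k=12$ and $r+3k-2=10$ holding uniformly for $(r,k)\in\{(9,1),(6,2),(3,3)\}$, placing the dominant term safely inside $B_g$ and the error term in the kernel of $\pi$.
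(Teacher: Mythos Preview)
Your proof is correct and follows essentially the same approach as the paper's: both reduce the statement to Lemma~\ref{lem:Ek differential}, observing that since $r+3k-2=10$ in each case, the commutator of $E_k$ with the differential lies in $\widetilde C_g$ and is therefore annihilated by the projection $\pi$. Your additional verification that $E_k$ lands in the correct genus and that the terms with $r+3k\in\{11,12\}$ $\omega$-legs survive projection is a helpful elaboration the paper leaves implicit.
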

\begin{proof}
By Lemma \ref{lem:Ek differential} the commutator of $E_k$ with the differential only has terms with $\leq 10$ $\omega$-legs, and these are killed by $\pi$.
\end{proof}

\subsection{Proof of Theorem \ref{thm:emb 2}}
It remains to check that the cohomology map of the map $E$ of Theorem \ref{thm:emb 2} (see the definition in Corollary \ref{cor:E definition}) is in fact injective.
Similarly to the proof of Theorem \ref{thm:emb1} above it suffices to check that the composition $\pi\circ\delta_\omega\circ E$ of $E$ with the map $\pi \colon H(C_g, \delta) \to H(C_{g.10\omega},\delta_s)$ and the quasi-isomorphism $\delta_\omega \colon B_g\to C_g$ is injective.
However, in contrast with the proof of Theorem \ref{thm:emb1}, the composition $\pi\circ \delta_\omega\circ E$ does not factorize through the cohomology isomorphism $K$ of Proposition \ref{prop:map K}.
Hence we need to trace through the proof of Proposition \ref{prop:map K}, in which the cohomology $H(C_{g,10\omega},\delta_s)$ is computed, and identify the subspace of the cohomology that is in the image of $\pi\circ\delta_\omega\circ E$.
As in that proof, we hence consider the filtration by the number of connected components in graphs, and the corresponding spectral sequence.
We need to trace our images $\pi\circ\delta_\omega\circ E(\gamma_1\cdots \gamma_r)$ through this spectral sequence.
Hence we consider the leading order term of $\pi\circ\delta_\omega\circ E(\gamma_1\cdots \gamma_r)$, i.\ e., the term with the most connected components.
This is easily seen to be 
\[
  \begin{tikzpicture}
    \node (v) at (0,0) {$\omega$};
    \node (w) at (1,0) {$\epsilon$};
    \draw (v) edge (w);
  \end{tikzpicture}
  \begin{tikzpicture}
    \node (v) at (0,0) {$\epsilon$};
    \node (w) at (1,0) {$\epsilon$};
    \draw (v) edge (w);
  \end{tikzpicture}
  \begin{tikzpicture}
    \node[int] (i) at (0,.5) {};
    \node (v1) at (-.5,-.2) {$\omega$};
    \node (v2) at (0,-.2) {$\omega$};
    \node (v3) at (.5,-.2) {$\omega$};
  \draw (i) edge (v1) edge (v2) edge (v3);
  \end{tikzpicture}
  \cdots 
  \begin{tikzpicture}
    \node[int] (i) at (0,.5) {};
    \node (v1) at (-.5,-.2) {$\omega$};
    \node (v2) at (0,-.2) {$\omega$};
    \node (v3) at (.5,-.2) {$\omega$};
  \draw (i) edge (v1) edge (v2) edge (v3);
  \end{tikzpicture}  
  \begin{tikzpicture}
    \node[ext] (i) at (0,.5) {$\gamma_1$};
    \node (v1) at (0,-.2) {$\omega$};
  \draw (i) edge (v1);
  \end{tikzpicture}
  \cdots 
  \begin{tikzpicture}
    \node[ext] (i) at (0,.5) {$\gamma_r$};
    \node (v1) at (0,-.2) {$\omega$};
  \draw (i) edge (v1);
  \end{tikzpicture}
\]
But this leading order term is the same as produced by the map $K$ of Proposition \ref{prop:map K}, acting on the summand $\Q \alpha\beta \otimes \fHGC$ on the left-hand side of \eqref{equ:K def}, and specifically on the element 
\[
  \iota(\gamma_1\cdots\gamma_r):=
  \begin{tikzpicture}
    \node[int] (i) at (0,.5) {};
    \node (v1) at (-.5,-.2) {$\omega$};
    \node (v2) at (0,-.2) {$\omega$};
    \node (v3) at (.5,-.2) {$\omega$};
  \draw (i) edge (v1) edge (v2) edge (v3);
  \end{tikzpicture}
  \cdots 
  \begin{tikzpicture}
    \node[int] (i) at (0,.5) {};
    \node (v1) at (-.5,-.2) {$\omega$};
    \node (v2) at (0,-.2) {$\omega$};
    \node (v3) at (.5,-.2) {$\omega$};
  \draw (i) edge (v1) edge (v2) edge (v3);
  \end{tikzpicture}  
  \begin{tikzpicture}
    \node[ext] (i) at (0,.5) {$\gamma_1$};
    \node (v1) at (0,-.2) {$\omega$};
  \draw (i) edge (v1);
  \end{tikzpicture}
  \cdots 
  \begin{tikzpicture}
    \node[ext] (i) at (0,.5) {$\gamma_r$};
    \node (v1) at (0,-.2) {$\omega$};
  \draw (i) edge (v1);
  \end{tikzpicture}
  \in \fHGC.
\]
But since the map $\iota$ is an injection on cohomology, and the map $K$ is an injection on cohomology, so must be $\pi\circ\delta_\omega\circ E$. Hence also $E$ is an injection on cohomology as claimed.
\hfill \qed

\subsection{Proof of Theorem \ref{thm:emb intro}}
To show Theorem \ref{thm:emb intro} of the introduction we need to check that the images of the maps $E$ and $F$ of Theorems \ref{thm:emb 2} and Theorem \ref{thm:emb1} inside the cohomology of $(C_g,\delta_s+\delta_\omega)$ are linearly independent. As in the preceding proof, it suffices to check that the images of $\pi\circ E$ and $\pi\circ F$ are linearly independent in $H(C_{g,10\omega},\delta_s)$. But the latter cohomology is computed in Propsition \ref{prop:map K} and identified with the genus $g$ part of
\begin{multline*}
  \resizebox{.96\hsize}{!}{
  $
  H(\fHGC)_{10\text{-hair}}
  \oplus 
  \Q\alpha\otimes H(\fHGC)_{10\text{-hair}} 
  \Q\beta\otimes H(\fHGC)_{9\text{-hair}} 
  \oplus \Q\alpha\beta \otimes H(\fHGC)_{9\text{-hair}}. 
  $
  }
\end{multline*}
Under this identification we saw in the proof of Theorem \ref{thm:emb1} that the image of the map $\pi\circ F$ is a subspace of the first two summands
\[
H(\fHGC)_{10\text{-hair}}
  \oplus 
  \Q\alpha\otimes H(\fHGC)_{10\text{-hair}}.
\]
Likewise, we saw in the proof of Theorem \ref{thm:emb 2} that the composition of $\pi\circ E$ with a projection to the summand $\Q\alpha\beta \otimes H(\fHGC)_{9\text{-hair}}$ is an injection.
Hence, since the two previous subspaces are linearly independent by Proposition \ref{prop:map K}, so must be the images of $\pi\circ E$  and $\pi\circ F$. 
\hfill \qed

\section{Euler characteristic}\label{sec:euler}

The $\bS_n$-equivariant Euler characteristic of a complex very similar to $X_{g,n}$ was computed in \cite{PayneWillwacherEuler}; the difference here is that the $\omega$-legs are odd instead of even. We recall the results from \cite{PayneWillwacherEuler} and then apply the necessary modifications to account for this degree shift.

First, we introduce the functions 
\begin{align*}
    E_\ell&:= \frac 1 \ell \sum_{d\mid \ell}\mu(\ell/d)\frac 1 {u^d},
    &
    \lambda_\ell &:= u^\ell (1-u^\ell)\ell,
\end{align*}
\begin{equation*} 
    B(z) := \sum_{r\geq 2}\frac{B_r}{r(r-1)} \frac 1 {z^{r-1}},
  \end{equation*}
and $U_\ell(X,u)$ such that
\begin{align*}
    \log U_\ell(X,u)
   &=   
    \log \frac {(-\lambda_\ell)^X \Gamma(-E_\ell+X) }{\Gamma(-E_\ell)}
    \\&=
        X\left(\log(\lambda_\ell E_\ell)-1 \right)+(-E_\ell+X-\textstyle{\frac 1 2} )\log(1-\textstyle{\frac X{E_\ell}}) + B(-E_\ell+X)- B(-E_\ell).
\end{align*}

Denote by $\widetilde X^{ev}_{g,n}$ a graded vector space defined in the same manner as $X_{g,n}$ but with even $\omega$-decorations instead of odd, and with all edges (structural or not) odd.
Let $\widetilde X^{ev}_{g,n,r\omega}\subset \widetilde X^{ev}_{g,n}$ be the subspace with $r$ legs decorated by $\omega$.
By \cite[\S4.3]{PayneWillwacherEuler}, the generating function for the equivariant Euler characteristic is\footnote{We subtract 1 relative to loc. cit. since we do not include the empty graph in our complex.}
\[
  \sum_{g,n,r}
  u^{g+n-1} w^r
  \chi_{\bbS_n}(\tilde X^{ev}_{g,n,r\omega} )
  =
  \prod_\ell 
  \frac { 
    U_\ell(\frac 1 \ell \sum_{d\mid \ell} \mu(\ell/d) (p_d  +1+w^d ), u )
  }
  { 
    U_\ell(\frac 1 \ell \sum_{d\mid \ell} \mu(\ell/d) p_d, u )
  }
  -1 \, ,
\]
with $w$ the formal variable taking care of the number of $\omega$-legs and $u$ the formal variable counting genus plus the number of punctures.
Looking at the derivation in loc. cit. one sees that the only change required from even to odd $\omega$ decorations is the sign in front of the term $w^d$. We obtain 
\[
  \sum_{g,n,r}
  u^{g+n-1} w^r
  \chi_{\bbS_n}(\tilde X_{g,n,r\omega} )
  =
  \prod_\ell 
  \frac { 
    U_\ell(\frac 1 \ell \sum_{d\mid \ell} \mu(\ell/d) (p_d  +1-w^d ), u )
  }
  { 
    U_\ell(\frac 1 \ell \sum_{d\mid \ell} \mu(\ell/d) p_d, u )
  }\, -1,
\]
with $\tilde X_{g,n}$ being a slightly modified version of $X_{g,n}$ in which all edges, are considered odd, not just structural ones.
This mis-treatment of non-structural edges may be undone by replacing $p_d\to -p_d$, which is the equivalent on the character of multiplying the underlying representation of the symmetric group by a degree shifted sign representation.
We hence obtain:
\[
  \sum_{g,n,r}
  u^{g+n-1} w^r
  \chi_{\bbS_n}(X_{g,n,r\omega} )
  =
  \prod_\ell 
  \frac { 
    U_\ell(\frac 1 \ell \sum_{d\mid \ell} \mu(\ell/d) (-p_d  +1-w^d ), u )
  }
  { 
    U_\ell(\frac 1 \ell \sum_{d\mid \ell} \mu(\ell/d) (-p_d), u )
  }\, -1.
\]

We are interested in the truncation of the complex $X_{g,n}$, concretely in the subcomplex 
$$C_{g,n}=\bigoplus_{r=0}^{10}X_{g,n,r\omega}[-21]$$
spanned by graphs with at most ten $\omega$-legs.
Let 
\[
T_{\leq 10} \left( \sum_{j=0}^\infty a_j  w^j \right)
=
\sum_{j=0}^{10} a_j
\]
be the operator that sums the first 10 coefficients of a formal power series in $w$.
Thus we find:

\[
  \sum_{g,n}
  u^{g+n}
  \chi_{\bbS_n}(C_{g,n} )
  =
  -u \ T_{\leq 10}\left(
  \prod_\ell 
  \frac { 
    U_\ell(\frac 1 \ell \sum_{d\mid \ell} \mu(\ell/d) (-p_d  +1-w^d ), u )
  }
  { 
    U_\ell(\frac 1 \ell \sum_{d\mid \ell} \mu(\ell/d) (-p_d), u )
  }-1\right)\, ,
\]
with the factor $-u$ accounting for the overall degree and genus shift due to the special vertex.

\begin{thm}
    The equivariant Euler characteristic of the weight 11 compactly supported cohomology of the moduli space of curves is computed by the following generating function:
\begin{multline*}
\frac12 \sum_{g,n\geq 0 \atop 2g+n\geq 3} u^{g+n} \chi_{\bbS_n}(\gr_{11}H_c(\M_{g,n}))
  =
  \sum_{g,n}
  u^{g+n}
  \chi_{\bbS_n}(C_{g,n} )
  \\=
  -u\  T_{\leq 10}\bigg(
  \prod_{\ell\geq 1} 
  \frac { 
    U_\ell(\frac 1 \ell \sum_{d\mid \ell} \mu(\ell/d) (-p_d  +1-w^d ), u )
  }
  { 
    U_\ell(\frac 1 \ell \sum_{d\mid \ell} \mu(\ell/d) (-p_d), u )
  }-1\bigg)\, .
\end{multline*}
\end{thm}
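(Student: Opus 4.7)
The plan is to assemble the pieces already established in the excerpt: the corollary identifying $H^\bullet(C_{g,n}) \otimes \Delta$ with $\gr_{11} H^\bullet_c(\M_{g,n})$, and the generating function identity obtained by modifying the formula of \cite{PayneWillwacherEuler}.

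For the first equality, I would use the corollary of Proposition~\ref{prop:Bgn intro} together with $\dim_\Q \Delta = 2$. The space $\Delta$ carries the trivial $\bS_n$-action for all $n$, so at each cohomological degree $k$ the character of $\bS_n$ on $\gr_{11} H^k_c(\M_{g,n})$ is twice the character on $H^k(C_{g,n})$. Taking alternating sums in $k$ gives $\chi_{\bS_n}(\gr_{11} H_c(\M_{g,n})) = 2\,\chi_{\bS_n}(C_{g,n})$, which is the first equality up to the factor of $\tfrac{1}{2}$. Note that this step does not depend on how $\Delta$ is placed in cohomological degree, only on its dimension and trivial symmetric group action.

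For the second equality, I would unwind the definitions $C_{g,n} = \widetilde C_{g,n}[-21] = \bigoplus_{r=0}^{10} X_{g,n,r\omega}[-21]$. The $[-21]$ shift contributes a factor of $(-1)^{21}=-1$ to each Euler characteristic. Next, I would invoke the generating function identity for the complex $\widetilde X^{ev}_{g,n,r\omega}$ from \cite{PayneWillwacherEuler} and apply the two modifications already explained in the text: first, changing the $\omega$-decorations from even to odd degree flips the sign of $w^d$ in the arguments of $U_\ell$; second, the edge-parity adjustment from $\widetilde X$ to $X$ replaces $p_d$ by $-p_d$. This produces the generating function $\sum_{g,n,r} u^{g+n-1} w^r \chi_{\bS_n}(X_{g,n,r\omega})$ in closed form. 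Finally, the extra factor of $u$ in $u^{g+n}$ versus the reference $u^{g+n-1}$, the sign from the degree shift, and the truncation at $r \leq 10$ together combine into the prefactor $-u$ and the operator $T_{\leq 10}$ applied to the subtracted generating function.

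The main obstacle is carefully tracking the combined effect of several sign conventions --- the parity of structural edges, the parity of $\omega$-decorations, the $21$-fold cohomological shift, and the Koszul signs implicit in the symmetric product construction underlying the derivation of \cite{PayneWillwacherEuler} --- to confirm that exactly the stated signs $-p_d + 1 - w^d$ appear, and the overall prefactor is precisely $-u$. Once the bookkeeping is done, the theorem is essentially a formal consequence of the corollary and the cited generating function formula.
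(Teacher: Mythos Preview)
Your proposal is correct and follows essentially the same approach as the paper: the theorem is stated immediately after the derivation in \S\ref{sec:euler}, and the ``proof'' consists precisely of the steps you describe---invoking the corollary $\gr_{11}H^k_c(\M_{g,n})\cong H^k(C_{g,n})\otimes\Delta$ with $\dim\Delta=2$ for the first equality, then citing \cite{PayneWillwacherEuler}, flipping the sign of $w^d$ for odd $\omega$-legs, replacing $p_d\mapsto -p_d$ for the leg-parity correction, truncating via $T_{\leq 10}$, and applying the factor $-u$ from the degree shift $[-21]$ and the genus shift due to the special vertex. Your bookkeeping matches the paper's.
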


Specializing to $n=0$, we obtain: 
\begin{align*}
&\frac12\sum_{g\geq 2} u^g \chi(\gr_{11}H_c(\M_{g}))
=   
u^9-2 u^{10}+2 u^{11}+8 u^{13}-17 u^{14}-14 u^{15}-20 u^{16}+29 u^{17}+85 u^{18}
\\ &\quad
+178 u^{19}+123 u^{20}-311 u^{21}-1049 u^{22}-2443 u^{23}-776 u^{24}+6027 u^{25}+7200 u^{26}-34892
   u^{27}
   \\ &\quad  +196735 u^{28}+1215236 u^{29}-3230856 u^{30}-26415680 u^{31}
    +O(u^{32})  
\end{align*}
\noindent The following graphs show $\log(\frac12|\chi(\gr_{11}H_c(\M_{g}))|)$ and $\sgn(\chi(\gr_{11}H_c(\M_{g})))$ for $g$ up to 70.
\begin{center}
\includegraphics[width=.45\textwidth]{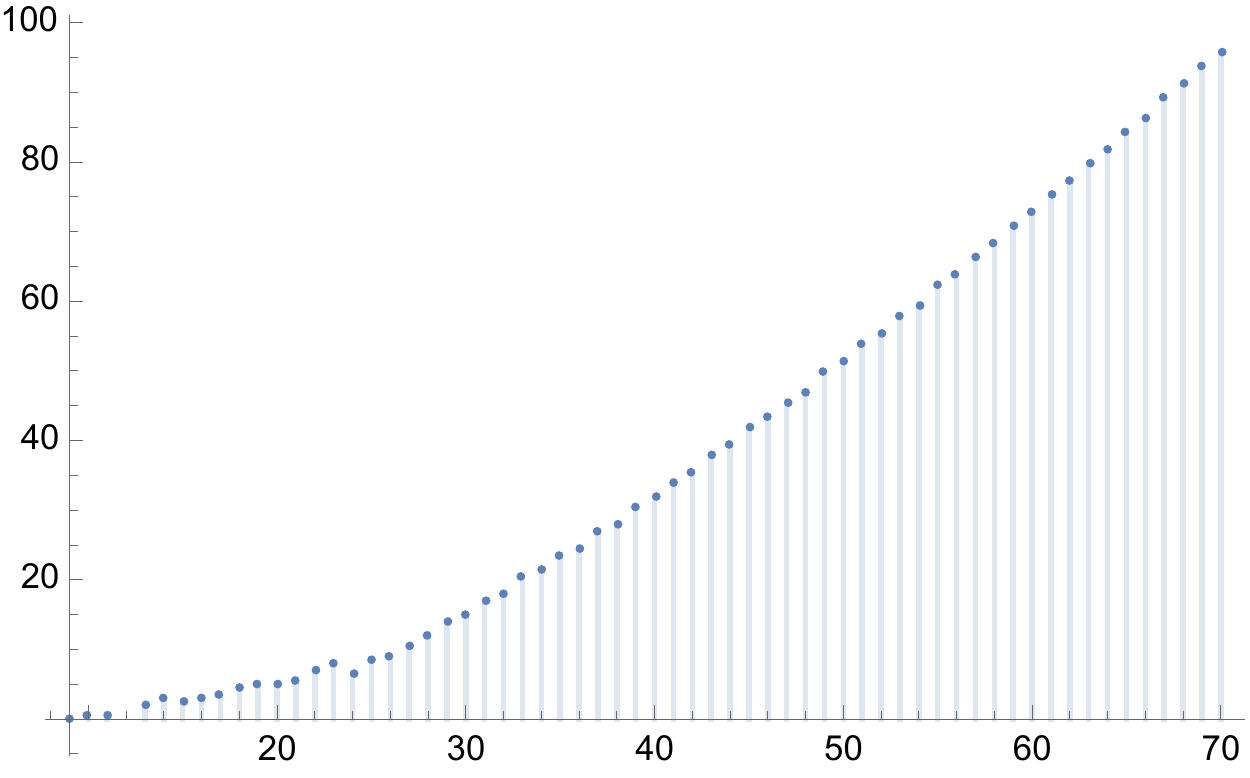} \quad \quad
\includegraphics[width=.45\textwidth]{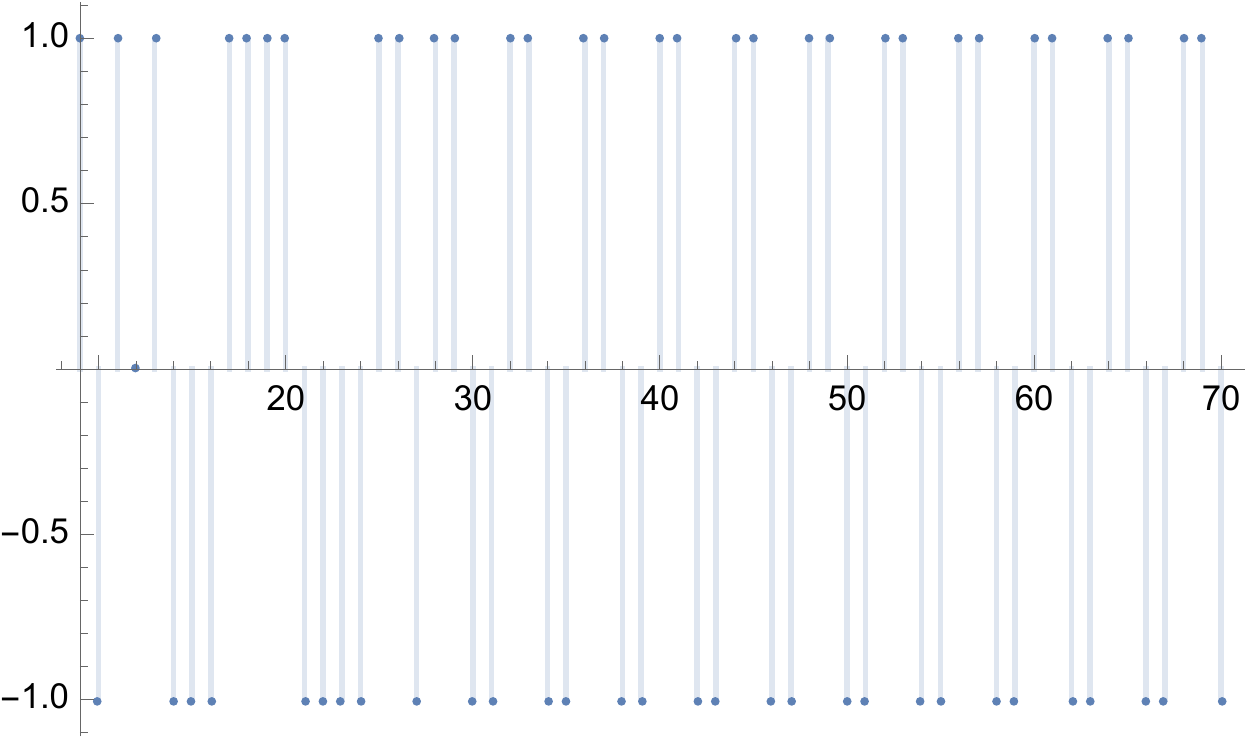}
\end{center}

We also obtain interesting numerical data for $n > 0$.  Recall that $\gr_{11}H^\bullet_c(\M_{g,n})$ vanishes for $g \leq 4$ and $n \leq 6$, by Corollary~\ref{cor:Eneg}.  In Figure~\ref{fig:EC table}, we present the $\bS_n$-equivariant Euler characteristic for $5 \leq g \leq 16$ and $n \leq 6$, expressed in the Schur polynomial basis for symmetric functions.  As mentioned in the introduction, our computations agree with those of Bergstr\"om and Faber for $g = 2$ and $3$.  In Figure~\ref{fig:EC table 2} we present the Euler characteristic for $g = 4$ and $7 \leq n \leq 15$.  More extensive data, for $g + n \leq 24$ is available at \url{https://github.com/wilthoma/weight11mgn}.

\begin{figure}
  \begin{adjustbox}{angle=90, scale=.7}
\begin{tabular}{|g|M|M|M|M|M|M|M|} \hline \rowcolor{Gray} g,n & 0 & 1 & 2 & 3 & 4 & 5 & 6\\ 
\hline
 5 & $ 0 $ & $ 0 $ & $ 0 $ & $ 0 $ & $ 0 $ & $ -s_{1,1,1,1,1} $ & $ -s_{1,1,1,1,1,1} + 2s_{3,1,1,1} $ \\  
\hline
 6 & $ 0 $ & $ 0 $ & $ 0 $ & $ 0 $ & $ -s_{2,1,1} $ & $ -s_{2,1,1,1} + s_{3,1,1} + s_{3,2} + 2s_{4,1} $ & $ -4s_{1,1,1,1,1,1} - 2s_{2,1,1,1,1} - 3s_{2,2,1,1} - 3s_{2,2,2} + s_{3,1,1,1} - 2s_{3,2,1} - 2s_{3,3} + 2s_{4,1,1} - 3s_{4,2} - 2s_{5,1} - 3s_{6} $ \\  
\hline
 7 & $ 0 $ & $ 0 $ & $ s_{1,1} $ & $ s_{1,1,1} - 2s_{3} $ & $ 2s_{2,2} - 2s_{3,1} $ & $ -s_{1,1,1,1,1} - 7s_{2,1,1,1} - 6s_{3,1,1} + 2s_{3,2} + s_{4,1} + 4s_{5} $ & $ -3s_{1,1,1,1,1,1} - 14s_{2,1,1,1,1} + 3s_{2,2,1,1} - 5s_{2,2,2} + 2s_{3,1,1,1} + 11s_{3,2,1} + 10s_{3,3} + 20s_{4,1,1} + 3s_{4,2} + 6s_{5,1} $ \\  
\hline
 8 & $ 0 $ & $ 0 $ & $ s_{2} $ & $ 4s_{1,1,1} $ & $ 10s_{1,1,1,1} + 2s_{2,1,1} - 2s_{2,2} - 10s_{3,1} - 3s_{4} $ & $ 16s_{1,1,1,1,1} - s_{2,1,1,1} + 8s_{2,2,1} - 29s_{3,1,1} + 7s_{3,2} + 13s_{5} $ & $ 15s_{1,1,1,1,1,1} - 18s_{2,1,1,1,1} + 4s_{2,2,1,1} - 8s_{2,2,2} - 75s_{3,1,1,1} + s_{3,2,1} + 35s_{3,3} - 8s_{4,1,1} + 23s_{4,2} + 51s_{5,1} + 14s_{6} $ \\  
\hline
 9 & $ s_{} $ & $ s_{1} $ & $ 4s_{2} $ & $ s_{1,1,1} + 11s_{2,1} + 4s_{3} $ & $ 9s_{1,1,1,1} + 25s_{2,1,1} - 6s_{2,2} - 9s_{3,1} - 26s_{4} $ & $ 37s_{1,1,1,1,1} + 45s_{2,1,1,1} + 26s_{2,2,1} - 32s_{3,1,1} - 21s_{3,2} - 60s_{4,1} - 19s_{5} $ & $ 90s_{1,1,1,1,1,1} + 40s_{2,1,1,1,1} + 10s_{2,2,1,1} + 95s_{2,2,2} - 223s_{3,1,1,1} - 63s_{3,2,1} + 56s_{3,3} - 199s_{4,1,1} + 100s_{4,2} + 73s_{5,1} + 94s_{6} $ \\  
\hline
 10 & $ -2s_{} $ & $ -6s_{1} $ & $ -9s_{1,1} - 2s_{2} $ & $ -14s_{1,1,1} + 8s_{2,1} + 30s_{3} $ & $ -s_{1,1,1,1} + 57s_{2,1,1} - 18s_{2,2} + 59s_{3,1} + 7s_{4} $ & $ 30s_{1,1,1,1,1} + 177s_{2,1,1,1} - 18s_{2,2,1} + 104s_{3,1,1} - 188s_{3,2} - 177s_{4,1} - 138s_{5} $ & $ 133s_{1,1,1,1,1,1} + 422s_{2,1,1,1,1} + 145s_{2,2,1,1} + 331s_{2,2,2} + 41s_{3,1,1,1} - 3s_{3,2,1} - 170s_{3,3} - 456s_{4,1,1} + 72s_{4,2} - 134s_{5,1} + 85s_{6} $ \\  
\hline
 11 & $ 2s_{} $ & $ s_{1} $ & $ -12s_{1,1} - 18s_{2} $ & $ -64s_{1,1,1} - 8s_{2,1} + 33s_{3} $ & $ -138s_{1,1,1,1} - 36s_{2,1,1} + 55s_{2,2} + 239s_{3,1} + 127s_{4} $ & $ -196s_{1,1,1,1,1} + 176s_{2,1,1,1} - 81s_{2,2,1} + 667s_{3,1,1} - 138s_{3,2} + 2s_{4,1} - 285s_{5} $ & $ -124s_{1,1,1,1,1,1} + 624s_{2,1,1,1,1} + 190s_{2,2,1,1} + 138s_{2,2,2} + 1548s_{3,1,1,1} - 199s_{3,2,1} - 965s_{3,3} - 62s_{4,1,1} - 1132s_{4,2} - 1456s_{5,1} - 590s_{6} $ \\  
\hline
 12 & $ 0 $ & $ 3s_{1} $ & $ -6s_{1,1} - 59s_{2} $ & $ -38s_{1,1,1} - 162s_{2,1} - 85s_{3} $ & $ -231s_{1,1,1,1} - 433s_{2,1,1} + 114s_{2,2} + 185s_{3,1} + 418s_{4} $ & $ -586s_{1,1,1,1,1} - 504s_{2,1,1,1} - 145s_{2,2,1} + 1023s_{3,1,1} + 847s_{3,2} + 1441s_{4,1} + 475s_{5} $ & $ -1308s_{1,1,1,1,1,1} - 713s_{2,1,1,1,1} - 430s_{2,2,1,1} - 2126s_{2,2,2} + 3745s_{3,1,1,1} - 447s_{3,2,1} - 1780s_{3,3} + 2713s_{4,1,1} - 3378s_{4,2} - 2540s_{5,1} - 2111s_{6} $ \\  
\hline
 13 & $ 8s_{} $ & $ 48s_{1} $ & $ 83s_{1,1} + 25s_{2} $ & $ 111s_{1,1,1} - 228s_{2,1} - 400s_{3} $ & $ -93s_{1,1,1,1} - 830s_{2,1,1} + 131s_{2,2} - 705s_{3,1} - 2s_{4} $ & $ -719s_{1,1,1,1,1} - 3101s_{2,1,1,1} - 328s_{2,2,1} - 1608s_{3,1,1} + 2717s_{3,2} + 3077s_{4,1} + 2311s_{5} $ & $ -2258s_{1,1,1,1,1,1} - 5169s_{2,1,1,1,1} - 963s_{2,2,1,1} - 3914s_{2,2,2} + 2600s_{3,1,1,1} + 4429s_{3,2,1} + 3361s_{3,3} + 10416s_{4,1,1} + 2197s_{4,2} + 3934s_{5,1} - 973s_{6} $ \\  
\hline
 14 & $ -17s_{} $ & $ 11s_{1} $ & $ 169s_{1,1} + 188s_{2} $ & $ 598s_{1,1,1} + 26s_{2,1} - 458s_{3} $ & $ 1259s_{1,1,1,1} + 35s_{2,1,1} - 833s_{2,2} - 3024s_{3,1} - 1759s_{4} $ & $ 1347s_{1,1,1,1,1} - 4226s_{2,1,1,1} - 853s_{2,2,1} - 9176s_{3,1,1} + 1051s_{3,2} - 143s_{4,1} + 3440s_{5} $ & $ 432s_{1,1,1,1,1,1} - 9102s_{2,1,1,1,1} - 2017s_{2,2,1,1} + 53s_{2,2,2} - 15086s_{3,1,1,1} + 12424s_{3,2,1} + 16880s_{3,3} + 10326s_{4,1,1} + 25310s_{4,2} + 26059s_{5,1} + 10237s_{6} $ \\  
\hline
 15 & $ -14s_{} $ & $ -34s_{1} $ & $ 123s_{1,1} + 499s_{2} $ & $ 740s_{1,1,1} + 1676s_{2,1} + 825s_{3} $ & $ 2623s_{1,1,1,1} + 3353s_{2,1,1} - 1733s_{2,2} - 3869s_{3,1} - 4669s_{4} $ & $ 6680s_{1,1,1,1,1} + 6159s_{2,1,1,1} + 3305s_{2,2,1} - 10919s_{3,1,1} - 7565s_{3,2} - 14792s_{4,1} - 4291s_{5} $ & $ 11039s_{1,1,1,1,1,1} - 8455s_{2,1,1,1,1} - 18627s_{2,2,1,1} + 10104s_{2,2,2} - 71325s_{3,1,1,1} - 28303s_{3,2,1} + 15022s_{3,3} - 49340s_{4,1,1} + 26884s_{4,2} + 28282s_{5,1} + 25714s_{6} $ \\  
\hline
 16 & $ -20s_{} $ & $ -268s_{1} $ & $ -491s_{1,1} + 55s_{2} $ & $ -317s_{1,1,1} + 2881s_{2,1} + 3441s_{3} $ & $ 2482s_{1,1,1,1} + 9040s_{2,1,1} - 429s_{2,2} + 4942s_{3,1} - 1635s_{4} $ & $ 11500s_{1,1,1,1,1} + 34542s_{2,1,1,1} + 8547s_{2,2,1} + 10598s_{3,1,1} - 27078s_{3,2} - 36549s_{4,1} - 24154s_{5} $ & $ 26795s_{1,1,1,1,1,1} + 37235s_{2,1,1,1,1} - 16753s_{2,2,1,1} + 23971s_{2,2,2} - 85912s_{3,1,1,1} - 113620s_{3,2,1} - 43822s_{3,3} - 163569s_{4,1,1} - 58533s_{4,2} - 55546s_{5,1} + 11555s_{6} $ \\  
\hline
\end{tabular}
  \end{adjustbox}
  \caption{\label{fig:EC table} The $\mathbb S_n$-equivariant Euler characteristic of $C_{g,n}$.} 
\end{figure}

\begin{figure}
\begin{adjustbox}{scale=.3435}
{
\begin{tabular}{|g|N|N|N|N|N|}
\hline 
\rowcolor{Gray} 
g,n & 7 & 8 & 9 & 10 & 11 \\
\hline
 4 & 
 $ s_{2,1,1,1,1,1} $ & $ s_{2,1,1,1,1,1,1} - s_{3,1,1,1,1,1} - s_{3,2,1,1,1} - 2s_{4,1,1,1,1} $ & $ 3s_{1,1,1,1,1,1,1,1,1} + 2s_{2,1,1,1,1,1,1,1} + 3s_{2,2,1,1,1,1,1} + 3s_{2,2,2,1,1,1} - s_{3,1,1,1,1,1,1} + 2s_{3,2,1,1,1,1} + s_{3,2,2,1,1} + 3s_{3,3,1,1,1} + s_{3,3,2,1} - s_{4,1,1,1,1,1} + 3s_{4,2,1,1,1} + 2s_{4,3,1,1} + 2s_{5,1,1,1,1} + 2s_{5,2,1,1} + 2s_{6,1,1,1} $ & $ 5s_{1,1,1,1,1,1,1,1,1,1} - 2s_{2,2,2,2,1,1} - 12s_{3,1,1,1,1,1,1,1} - 10s_{3,2,1,1,1,1,1} - 12s_{3,2,2,1,1,1} - 5s_{3,2,2,2,1} - 5s_{3,3,2,1,1} - s_{3,3,2,2} - 3s_{3,3,3,1} - 9s_{4,1,1,1,1,1,1} - 11s_{4,2,1,1,1,1} - 13s_{4,2,2,1,1} - s_{4,2,2,2} - 6s_{4,3,1,1,1} - 7s_{4,3,2,1} - 2s_{4,3,3} - 2s_{4,4,1,1} - s_{5,1,1,1,1,1} - 7s_{5,2,1,1,1} - 4s_{5,2,2,1} - 8s_{5,3,1,1} - 2s_{5,3,2} - 2s_{5,4,1} + s_{6,1,1,1,1} - 5s_{6,2,1,1} - 3s_{6,3,1} - 2s_{7,1,1,1} - 2s_{7,2,1} - 2s_{8,1,1} $ & $ 5s_{1,1,1,1,1,1,1,1,1,1,1} + s_{2,1,1,1,1,1,1,1,1,1} + 13s_{2,2,1,1,1,1,1,1,1} + 14s_{2,2,2,1,1,1,1,1} + 11s_{2,2,2,2,1,1,1} + 7s_{2,2,2,2,2,1} - 10s_{3,1,1,1,1,1,1,1,1} + 22s_{3,2,1,1,1,1,1,1} + 21s_{3,2,2,1,1,1,1} + 22s_{3,2,2,2,1,1} + 7s_{3,2,2,2,2} + 42s_{3,3,1,1,1,1,1} + 37s_{3,3,2,1,1,1} + 26s_{3,3,2,2,1} + 5s_{3,3,3,1,1} + 6s_{3,3,3,2} + 7s_{4,1,1,1,1,1,1,1} + 34s_{4,2,1,1,1,1,1} + 33s_{4,2,2,1,1,1} + 27s_{4,2,2,2,1} + 46s_{4,3,1,1,1,1} + 46s_{4,3,2,1,1} + 22s_{4,3,2,2} + 12s_{4,3,3,1} + 23s_{4,4,1,1,1} + 22s_{4,4,2,1} + 4s_{4,4,3} + 27s_{5,1,1,1,1,1,1} + 35s_{5,2,1,1,1,1} + 40s_{5,2,2,1,1} + 15s_{5,2,2,2} + 23s_{5,3,1,1,1} + 32s_{5,3,2,1} + 9s_{5,3,3} + 15s_{5,4,1,1} + 11s_{5,4,2} + 5s_{5,5,1} + 17s_{6,1,1,1,1,1} + 21s_{6,2,1,1,1} + 23s_{6,2,2,1} + 14s_{6,3,1,1} + 13s_{6,3,2} + 8s_{6,4,1} + 2s_{6,5} + 2s_{7,1,1,1,1} + 9s_{7,2,1,1} + 5s_{7,2,2} + 10s_{7,3,1} + 3s_{7,4} - s_{8,1,1,1} + 5s_{8,2,1} + 3s_{8,3} + 2s_{9,1,1} + 2s_{9,2} + 2s_{10,1} $ \\
 \hline
 \end{tabular}
 }
    \end{adjustbox}

\begin{adjustbox}{scale=.32}
{
  \begin{tabular}{|g|N|N|N|N|}
\hline 
\rowcolor{Gray} 
g,n & 12 & 13 & 14 & 15\\ 
\hline
 4 & 
  $ 3s_{1,1,1,1,1,1,1,1,1,1,1,1} - 5s_{2,1,1,1,1,1,1,1,1,1,1} - 23s_{2,2,2,1,1,1,1,1,1} - 21s_{2,2,2,2,1,1,1,1} - 15s_{2,2,2,2,2,1,1} - 9s_{2,2,2,2,2,2} - 32s_{3,1,1,1,1,1,1,1,1,1} - 41s_{3,2,1,1,1,1,1,1,1} - 112s_{3,2,2,1,1,1,1,1} - 91s_{3,2,2,2,1,1,1} - 60s_{3,2,2,2,2,1} - 21s_{3,3,1,1,1,1,1,1} - 123s_{3,3,2,1,1,1,1} - 96s_{3,3,2,2,1,1} - 43s_{3,3,2,2,2} - 102s_{3,3,3,1,1,1} - 63s_{3,3,3,2,1} - 3s_{3,3,3,3} - 27s_{4,1,1,1,1,1,1,1,1} - 98s_{4,2,1,1,1,1,1,1} - 172s_{4,2,2,1,1,1,1} - 130s_{4,2,2,2,1,1} - 64s_{4,2,2,2,2} - 126s_{4,3,1,1,1,1,1} - 241s_{4,3,2,1,1,1} - 170s_{4,3,2,2,1} - 133s_{4,3,3,1,1} - 61s_{4,3,3,2} - 64s_{4,4,1,1,1,1} - 116s_{4,4,2,1,1} - 61s_{4,4,2,2} - 70s_{4,4,3,1} - 14s_{4,4,4} - s_{5,1,1,1,1,1,1,1} - 116s_{5,2,1,1,1,1,1} - 139s_{5,2,2,1,1,1} - 106s_{5,2,2,2,1} - 191s_{5,3,1,1,1,1} - 225s_{5,3,2,1,1} - 115s_{5,3,2,2} - 71s_{5,3,3,1} - 102s_{5,4,1,1,1} - 126s_{5,4,2,1} - 36s_{5,4,3} - 20s_{5,5,1,1} - 22s_{5,5,2} - 20s_{6,1,1,1,1,1,1} - 96s_{6,2,1,1,1,1} - 90s_{6,2,2,1,1} - 57s_{6,2,2,2} - 124s_{6,3,1,1,1} - 124s_{6,3,2,1} - 22s_{6,3,3} - 67s_{6,4,1,1} - 58s_{6,4,2} - 18s_{6,5,1} - 3s_{6,6} - 39s_{7,1,1,1,1,1} - 58s_{7,2,1,1,1} - 58s_{7,2,2,1} - 42s_{7,3,1,1} - 41s_{7,3,2} - 26s_{7,4,1} - 8s_{7,5} - 20s_{8,1,1,1,1} - 24s_{8,2,1,1} - 24s_{8,2,2} - 16s_{8,3,1} - 9s_{8,4} - 2s_{9,1,1,1} - 9s_{9,2,1} - 8s_{9,3} + s_{10,1,1} - 5s_{10,2} - 2s_{11,1} - 2s_{12} $ & $ s_{1,1,1,1,1,1,1,1,1,1,1,1,1} - 6s_{2,1,1,1,1,1,1,1,1,1,1,1} + 23s_{2,2,1,1,1,1,1,1,1,1,1} + 23s_{2,2,2,1,1,1,1,1,1,1} + 73s_{2,2,2,2,1,1,1,1,1} + 63s_{2,2,2,2,2,1,1,1} + 38s_{2,2,2,2,2,2,1} - 14s_{3,1,1,1,1,1,1,1,1,1,1} + 83s_{3,2,1,1,1,1,1,1,1,1} + 145s_{3,2,2,1,1,1,1,1,1} + 281s_{3,2,2,2,1,1,1,1} + 203s_{3,2,2,2,2,1,1} + 87s_{3,2,2,2,2,2} + 199s_{3,3,1,1,1,1,1,1,1} + 354s_{3,3,2,1,1,1,1,1} + 507s_{3,3,2,2,1,1,1} + 303s_{3,3,2,2,2,1} + 184s_{3,3,3,1,1,1,1} + 360s_{3,3,3,2,1,1} + 133s_{3,3,3,2,2} + 144s_{3,3,3,3,1} + 45s_{4,1,1,1,1,1,1,1,1,1} + 197s_{4,2,1,1,1,1,1,1,1} + 430s_{4,2,2,1,1,1,1,1} + 523s_{4,2,2,2,1,1,1} + 318s_{4,2,2,2,2,1} + 388s_{4,3,1,1,1,1,1,1} + 896s_{4,3,2,1,1,1,1} + 953s_{4,3,2,2,1,1} + 377s_{4,3,2,2,2} + 581s_{4,3,3,1,1,1} + 624s_{4,3,3,2,1} + 128s_{4,3,3,3} + 342s_{4,4,1,1,1,1,1} + 656s_{4,4,2,1,1,1} + 546s_{4,4,2,2,1} + 418s_{4,4,3,1,1} + 288s_{4,4,3,2} + 91s_{4,4,4,1} + 128s_{5,1,1,1,1,1,1,1,1} + 312s_{5,2,1,1,1,1,1,1} + 650s_{5,2,2,1,1,1,1} + 569s_{5,2,2,2,1,1} + 241s_{5,2,2,2,2} + 457s_{5,3,1,1,1,1,1} + 1096s_{5,3,2,1,1,1} + 844s_{5,3,2,2,1} + 701s_{5,3,3,1,1} + 377s_{5,3,3,2} + 475s_{5,4,1,1,1,1} + 825s_{5,4,2,1,1} + 418s_{5,4,2,2} + 467s_{5,4,3,1} + 76s_{5,4,4} + 238s_{5,5,1,1,1} + 269s_{5,5,2,1} + 75s_{5,5,3} + 97s_{6,1,1,1,1,1,1,1} + 321s_{6,2,1,1,1,1,1} + 541s_{6,2,2,1,1,1} + 372s_{6,2,2,2,1} + 472s_{6,3,1,1,1,1} + 825s_{6,3,2,1,1} + 385s_{6,3,2,2} + 385s_{6,3,3,1} + 392s_{6,4,1,1,1} + 521s_{6,4,2,1} + 176s_{6,4,3} + 186s_{6,5,1,1} + 134s_{6,5,2} + 40s_{6,6,1} + 27s_{7,1,1,1,1,1,1} + 237s_{7,2,1,1,1,1} + 279s_{7,2,2,1,1} + 137s_{7,2,2,2} + 365s_{7,3,1,1,1} + 396s_{7,3,2,1} + 84s_{7,3,3} + 224s_{7,4,1,1} + 169s_{7,4,2} + 66s_{7,5,1} + 9s_{7,6} + 32s_{8,1,1,1,1,1} + 138s_{8,2,1,1,1} + 116s_{8,2,2,1} + 167s_{8,3,1,1} + 101s_{8,3,2} + 79s_{8,4,1} + 12s_{8,5} + 43s_{9,1,1,1,1} + 64s_{9,2,1,1} + 40s_{9,2,2} + 40s_{9,3,1} + 12s_{9,4} + 20s_{10,1,1,1} + 22s_{10,2,1} + 5s_{10,3} + 2s_{11,1,1} + 4s_{11,2} - s_{12,1} $ & $ -s_{1,1,1,1,1,1,1,1,1,1,1,1,1,1} - 17s_{2,1,1,1,1,1,1,1,1,1,1,1,1} - 13s_{2,2,1,1,1,1,1,1,1,1,1,1} - 93s_{2,2,2,1,1,1,1,1,1,1,1} - 111s_{2,2,2,2,1,1,1,1,1,1} - 174s_{2,2,2,2,2,1,1,1,1} - 129s_{2,2,2,2,2,2,1,1} - 43s_{2,2,2,2,2,2,2} - 47s_{3,1,1,1,1,1,1,1,1,1,1,1} - 120s_{3,2,1,1,1,1,1,1,1,1,1} - 460s_{3,2,2,1,1,1,1,1,1,1} - 657s_{3,2,2,2,1,1,1,1,1} - 800s_{3,2,2,2,2,1,1,1} - 459s_{3,2,2,2,2,2,1} - 169s_{3,3,1,1,1,1,1,1,1,1} - 895s_{3,3,2,1,1,1,1,1,1} - 1319s_{3,3,2,2,1,1,1,1} - 1337s_{3,3,2,2,2,1,1} - 490s_{3,3,2,2,2,2} - 978s_{3,3,3,1,1,1,1,1} - 1429s_{3,3,3,2,1,1,1} - 1192s_{3,3,3,2,2,1} - 523s_{3,3,3,3,1,1} - 400s_{3,3,3,3,2} - 38s_{4,1,1,1,1,1,1,1,1,1,1} - 396s_{4,2,1,1,1,1,1,1,1,1} - 1047s_{4,2,2,1,1,1,1,1,1} - 1608s_{4,2,2,2,1,1,1,1} - 1490s_{4,2,2,2,2,1,1} - 530s_{4,2,2,2,2,2} - 912s_{4,3,1,1,1,1,1,1,1} - 2692s_{4,3,2,1,1,1,1,1} - 3753s_{4,3,2,2,1,1,1} - 2691s_{4,3,2,2,2,1} - 2399s_{4,3,3,1,1,1,1} - 3441s_{4,3,3,2,1,1} - 1680s_{4,3,3,2,2} - 1173s_{4,3,3,3,1} - 787s_{4,4,1,1,1,1,1,1} - 2412s_{4,4,2,1,1,1,1} - 2828s_{4,4,2,2,1,1} - 1236s_{4,4,2,2,2} - 2439s_{4,4,3,1,1,1} - 2641s_{4,4,3,2,1} - 539s_{4,4,3,3} - 837s_{4,4,4,1,1} - 533s_{4,4,4,2} - 50s_{5,1,1,1,1,1,1,1,1,1} - 749s_{5,2,1,1,1,1,1,1,1} - 1586s_{5,2,2,1,1,1,1,1} - 2245s_{5,2,2,2,1,1,1} - 1472s_{5,2,2,2,2,1} - 1790s_{5,3,1,1,1,1,1,1} - 4135s_{5,3,2,1,1,1,1} - 4897s_{5,3,2,2,1,1} - 2038s_{5,3,2,2,2} - 2974s_{5,3,3,1,1,1} - 3530s_{5,3,3,2,1} - 805s_{5,3,3,3} - 1738s_{5,4,1,1,1,1,1} - 4214s_{5,4,2,1,1,1} - 3726s_{5,4,2,2,1} - 3353s_{5,4,3,1,1} - 2214s_{5,4,3,2} - 867s_{5,4,4,1} - 772s_{5,5,1,1,1,1} - 1803s_{5,5,2,1,1} - 922s_{5,5,2,2} - 1257s_{5,5,3,1} - 239s_{5,5,4} - 193s_{6,1,1,1,1,1,1,1,1} - 942s_{6,2,1,1,1,1,1,1} - 1820s_{6,2,2,1,1,1,1} - 2011s_{6,2,2,2,1,1} - 759s_{6,2,2,2,2} - 1848s_{6,3,1,1,1,1,1} - 3973s_{6,3,2,1,1,1} - 3471s_{6,3,2,2,1} - 2446s_{6,3,3,1,1} - 1670s_{6,3,3,2} - 1890s_{6,4,1,1,1,1} - 3684s_{6,4,2,1,1} - 1881s_{6,4,2,2} - 2166s_{6,4,3,1} - 303s_{6,4,4} - 1052s_{6,5,1,1,1} - 1581s_{6,5,2,1} - 596s_{6,5,3} - 271s_{6,6,1,1} - 227s_{6,6,2} - 289s_{7,1,1,1,1,1,1,1} - 826s_{7,2,1,1,1,1,1} - 1520s_{7,2,2,1,1,1} - 1138s_{7,2,2,2,1} - 1287s_{7,3,1,1,1,1} - 2561s_{7,3,2,1,1} - 1243s_{7,3,2,2} - 1299s_{7,3,3,1} - 1306s_{7,4,1,1,1} - 1833s_{7,4,2,1} - 632s_{7,4,3} - 738s_{7,5,1,1} - 530s_{7,5,2} - 178s_{7,6,1} - 9s_{7,7} - 177s_{8,1,1,1,1,1,1} - 545s_{8,2,1,1,1,1} - 846s_{8,2,2,1,1} - 334s_{8,2,2,2} - 789s_{8,3,1,1,1} - 1093s_{8,3,2,1} - 333s_{8,3,3} - 637s_{8,4,1,1} - 456s_{8,4,2} - 253s_{8,5,1} - 31s_{8,6} - 47s_{9,1,1,1,1,1} - 302s_{9,2,1,1,1} - 286s_{9,2,2,1} - 416s_{9,3,1,1} - 252s_{9,3,2} - 200s_{9,4,1} - 27s_{9,5} - 35s_{10,1,1,1,1} - 142s_{10,2,1,1} - 49s_{10,2,2} - 127s_{10,3,1} - 23s_{10,4} - 43s_{11,1,1,1} - 44s_{11,2,1} - 8s_{11,3} - 18s_{12,1,1} - s_{12,2} + s_{13,1} + 3s_{14} $ & $ -2s_{1,1,1,1,1,1,1,1,1,1,1,1,1,1,1} - 19s_{2,1,1,1,1,1,1,1,1,1,1,1,1,1} + 17s_{2,2,1,1,1,1,1,1,1,1,1,1,1} + 35s_{2,2,2,1,1,1,1,1,1,1,1,1} + 223s_{2,2,2,2,1,1,1,1,1,1,1} + 293s_{2,2,2,2,2,1,1,1,1,1} + 379s_{2,2,2,2,2,2,1,1,1} + 217s_{2,2,2,2,2,2,2,1} - 2s_{3,1,1,1,1,1,1,1,1,1,1,1,1} + 190s_{3,2,1,1,1,1,1,1,1,1,1,1} + 578s_{3,2,2,1,1,1,1,1,1,1,1} + 1472s_{3,2,2,2,1,1,1,1,1,1} + 1847s_{3,2,2,2,2,1,1,1,1} + 1750s_{3,2,2,2,2,2,1,1} + 617s_{3,2,2,2,2,2,2} + 567s_{3,3,1,1,1,1,1,1,1,1,1} + 1788s_{3,3,2,1,1,1,1,1,1,1} + 3880s_{3,3,2,2,1,1,1,1,1} + 4411s_{3,3,2,2,2,1,1,1} + 3109s_{3,3,2,2,2,2,1} + 1837s_{3,3,3,1,1,1,1,1,1} + 4749s_{3,3,3,2,1,1,1,1} + 4857s_{3,3,3,2,2,1,1} + 2270s_{3,3,3,2,2,2} + 2990s_{3,3,3,3,1,1,1} + 2821s_{3,3,3,3,2,1} + 392s_{3,3,3,3,3} + 162s_{4,1,1,1,1,1,1,1,1,1,1,1} + 713s_{4,2,1,1,1,1,1,1,1,1,1} + 2356s_{4,2,2,1,1,1,1,1,1,1} + 4213s_{4,2,2,2,1,1,1,1,1} + 4894s_{4,2,2,2,2,1,1,1} + 3262s_{4,2,2,2,2,2,1} + 1857s_{4,3,1,1,1,1,1,1,1,1} + 6912s_{4,3,2,1,1,1,1,1,1} + 12144s_{4,3,2,2,1,1,1,1} + 12039s_{4,3,2,2,2,1,1} + 5043s_{4,3,2,2,2,2} + 7514s_{4,3,3,1,1,1,1,1} + 14445s_{4,3,3,2,1,1,1} + 11810s_{4,3,3,2,2,1} + 7115s_{4,3,3,3,1,1} + 4289s_{4,3,3,3,2} + 2362s_{4,4,1,1,1,1,1,1,1} + 7633s_{4,4,2,1,1,1,1,1} + 12023s_{4,4,2,2,1,1,1} + 9125s_{4,4,2,2,2,1} + 9046s_{4,4,3,1,1,1,1} + 15134s_{4,4,3,2,1,1} + 7371s_{4,4,3,2,2} + 6154s_{4,4,3,3,1} + 4118s_{4,4,4,1,1,1} + 5397s_{4,4,4,2,1} + 1589s_{4,4,4,3} + 356s_{5,1,1,1,1,1,1,1,1,1,1} + 1543s_{5,2,1,1,1,1,1,1,1,1} + 4714s_{5,2,2,1,1,1,1,1,1} + 7084s_{5,2,2,2,1,1,1,1} + 7013s_{5,2,2,2,2,1,1} + 2735s_{5,2,2,2,2,2} + 3736s_{5,3,1,1,1,1,1,1,1} + 13040s_{5,3,2,1,1,1,1,1} + 19142s_{5,3,2,2,1,1,1} + 14726s_{5,3,2,2,2,1} + 13195s_{5,3,3,1,1,1,1} + 19847s_{5,3,3,2,1,1} + 10006s_{5,3,3,2,2} + 7087s_{5,3,3,3,1} + 5395s_{5,4,1,1,1,1,1,1} + 16118s_{5,4,2,1,1,1,1} + 20683s_{5,4,2,2,1,1} + 9489s_{5,4,2,2,2} + 17338s_{5,4,3,1,1,1} + 20971s_{5,4,3,2,1} + 4814s_{5,4,3,3} + 7051s_{5,4,4,1,1} + 5028s_{5,4,4,2} + 3753s_{5,5,1,1,1,1,1} + 9149s_{5,5,2,1,1,1} + 9010s_{5,5,2,2,1} + 8265s_{5,5,3,1,1} + 6064s_{5,5,3,2} + 3173s_{5,5,4,1} + 382s_{5,5,5} + 356s_{6,1,1,1,1,1,1,1,1,1} + 2291s_{6,2,1,1,1,1,1,1,1} + 5785s_{6,2,2,1,1,1,1,1} + 7855s_{6,2,2,2,1,1,1} + 5795s_{6,2,2,2,2,1} + 5511s_{6,3,1,1,1,1,1,1} + 15366s_{6,3,2,1,1,1,1} + 18722s_{6,3,2,2,1,1} + 8604s_{6,3,2,2,2} + 13005s_{6,3,3,1,1,1} + 15010s_{6,3,3,2,1} + 3131s_{6,3,3,3} + 6987s_{6,4,1,1,1,1,1} + 18013s_{6,4,2,1,1,1} + 17157s_{6,4,2,2,1} + 15909s_{6,4,3,1,1} + 10864s_{6,4,3,2} + 4813s_{6,4,4,1} + 5143s_{6,5,1,1,1,1} + 10920s_{6,5,2,1,1} + 6177s_{6,5,2,2} + 7526s_{6,5,3,1} + 1606s_{6,5,4} + 1963s_{6,6,1,1,1} + 2937s_{6,6,2,1} + 1083s_{6,6,3} + 324s_{7,1,1,1,1,1,1,1,1} + 2497s_{7,2,1,1,1,1,1,1} + 5085s_{7,2,2,1,1,1,1} + 6004s_{7,2,2,2,1,1} + 2541s_{7,2,2,2,2} + 5588s_{7,3,1,1,1,1,1} + 12371s_{7,3,2,1,1,1} + 11526s_{7,3,2,2,1} + 8139s_{7,3,3,1,1} + 5722s_{7,3,3,2} + 6073s_{7,4,1,1,1,1} + 12670s_{7,4,2,1,1} + 6942s_{7,4,2,2} + 8043s_{7,4,3,1} + 1287s_{7,4,4} + 3892s_{7,5,1,1,1} + 6361s_{7,5,2,1} + 2531s_{7,5,3} + 1551s_{7,6,1,1} + 1330s_{7,6,2} + 308s_{7,7,1} + 431s_{8,1,1,1,1,1,1,1} + 2025s_{8,2,1,1,1,1,1} + 3573s_{8,2,2,1,1,1} + 3103s_{8,2,2,2,1} + 3774s_{8,3,1,1,1,1} + 7025s_{8,3,2,1,1} + 3834s_{8,3,2,2} + 3441s_{8,3,3,1} + 3693s_{8,4,1,1,1} + 5576s_{8,4,2,1} + 1936s_{8,4,3} + 2041s_{8,5,1,1} + 1786s_{8,5,2} + 611s_{8,6,1} + 81s_{8,7} + 431s_{9,1,1,1,1,1,1} + 1250s_{9,2,1,1,1,1} + 1974s_{9,2,2,1,1} + 921s_{9,2,2,2} + 1830s_{9,3,1,1,1} + 2748s_{9,3,2,1} + 844s_{9,3,3} + 1588s_{9,4,1,1} + 1292s_{9,4,2} + 710s_{9,5,1} + 116s_{9,6} + 219s_{10,1,1,1,1,1} + 626s_{10,2,1,1,1} + 734s_{10,2,2,1} + 762s_{10,3,1,1} + 634s_{10,3,2} + 462s_{10,4,1} + 115s_{10,5} + 51s_{11,1,1,1,1} + 261s_{11,2,1,1} + 140s_{11,2,2} + 269s_{11,3,1} + 73s_{11,4} + 31s_{12,1,1,1} + 81s_{12,2,1} + 50s_{12,3} + 29s_{13,1,1} + 13s_{13,2} + 7s_{14,1} $ \\  
\hline
\end{tabular}
}
    \end{adjustbox}
    \caption{\label{fig:EC table 2} The $\mathbb S_n$-equivariant Euler characteristic of $C_{g,n}$ for $g = 4$ and $n \leq 15$.}
\end{figure}

\appendix 

\section{Recollection and variant of \cite{TWspherical}}
\label{app:TW recollection}

We denote by $\sX^{conn}\subset \sX$ the subcomplex spanned by graphs that are connected in the blown-up picture.
We denote by $\sX^{tp}$ a graph complex defined just like $\sX$, just allowing tadpoles at all vertices, and by $\sX^{tp,conn}$ its connected subcomplex.
Similarly, we denote by $\HGC\subset \fHGC$ the connected part, so that 
\[
  \fHGC =\Sym(\HGC)
\]
is a symmetric product. We denote by $\HGC^{tp}$ the variant of the hairy graph complex $\HGC$ generated by graphs that may have tadpoles at vertices.
We shall recall the following result:

\begin{thm}[Theorem 3.1 of \cite{TWspherical}]\label{thm:TW mapping cone}
The mapping cone of the inclusion $\HGC^{tp}\to (\sX^{tp,conn},\delta_s)$ has two-dimensional cohomology, spanned by one class whose projection on $\HGC^{tp}$ is 
\[
    D=
    \begin{tikzpicture}[baseline=-.65ex,every loop/.style={}]
    \node (v) at (0,-.3) {};
    \node [int] (w) at (0,.3) {};
    \draw (v) edge (w) (w) edge[loop] (w);
    \end{tikzpicture}
\]
and by one class whose projection to $(\sX^{tp,conn},\delta_s)$ is 
\[
    T=
\begin{tikzpicture}[baseline=0]
\node (v1) at (-.6,0) {$\epsilon$};
\node (v2) at (0,0) {$\omega$};
\node (v3) at (.6,0) {$\omega$};
\node [int] (w) at (0,.6) {};
\draw (w) edge (v1) edge (v2) edge (v3);
\end{tikzpicture}.
\]
\end{thm}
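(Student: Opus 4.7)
The plan is to compute $H(\mathrm{cone}(\iota))$ by first exhibiting the two claimed classes explicitly and then arguing via a spectral sequence that nothing else survives. First I would verify that $T$ is a $\delta_s$-cocycle: its single vertex has valence $3$, so $\delta_s^\bullet T = 0$ because any split would produce a bivalent vertex, and the only admissible subsets for $\delta_s^\circ$ pair the $\epsilon$-leg with one of the two $\omega$-legs, producing a graph with a double edge whose sign-reversing $\ZZ/2$-symmetry forces it to vanish. By the same bivalent-vertex argument, $D$ is closed in $\HGC^{tp}$. To see that the pair $(D,\cdot)$ defines a cone class I exhibit an explicit primitive for the image $\iota(D)$ in $\sX^{tp,conn}$: the $0$-vertex blown-up graph consisting of an $\omega$-leg joined to an $\epsilon$-leg by a single edge. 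Applying $\delta_s^\circ$ to this graph, the unique admissible leg-subset groups the two legs at a new internal vertex and the edge between them becomes a tadpole there, yielding $\pm\iota(D)$. Thus $(D,\mp b)$ and $(0,T)$ are cone cocycles with the prescribed projections.

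Nontriviality then follows from vertex counting. Any coboundary representative of the form $\bigl(-d_{\HGC^{tp}} a', \iota(a') + \delta_s b'\bigr)$ equal to $(0,T)$ would require $T - \iota(a') = \delta_s b'$; since $\iota(a')$ contains no $\epsilon$-legs while $T$ does, the $\epsilon$-part of $T$ would have to come from $\delta_s b'$, but $\delta_s$ strictly increases the internal vertex count by $1$ and $T$ has only a single internal vertex, precluding any such $b'$. Analogously, $(D,\mp b)$ being a coboundary would force $D$ to be $d_{\HGC^{tp}}$-exact, which is impossible since a primitive would need zero vertices.

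For the exhaustion step, I would run a spectral sequence on the cone filtered by the total number of $\epsilon$- and $\omega$-legs. On the $E^0$-page only the leg-preserving part of $\delta_s$ survives; a secondary filtration by the number of internal vertices reduces the problem to building a contracting homotopy that is essentially adjoint to vertex splitting, gluing legs back together and inserting an edge. The main obstacle, and the heart of the argument in \cite{TWspherical}, is to assemble these local homotopies into a global contraction compatible with both $\delta_s^\bullet$ and $\delta_s^\circ$, and to control the higher differentials of the spectral sequence carefully enough that it collapses after finitely many pages, leaving exactly the two-dimensional cohomology spanned by the classes identified above. The necessary sign bookkeeping and the interplay between leg-grouping and vertex-splitting are the delicate technical points.
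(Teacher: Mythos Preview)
The paper does not prove this theorem at all: it is quoted verbatim from \cite{TWspherical} as an external input, and only the tadpole-free Corollary~\ref{cor:TW tadpolefree} is deduced from it here. So there is no ``paper's own proof'' to compare against.

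That said, your explicit work on the two classes is essentially correct and goes beyond what the present paper records. Your identification of $L^\omega$ as a $\delta_s$-primitive for $\iota(D)=D^\omega$ matches exactly what is used in the proof of Corollary~\ref{cor:TW tadpolefree}, and your nontriviality arguments by vertex count are sound (for $T$ one must also note, as you implicitly do, that the only $0$-vertex graphs $L^\omega,L^\epsilon$ map under $\delta_s^\circ$ to tadpole graphs $D^\omega,D^\epsilon$, not to the tripod $T$).

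The genuine gap is the exhaustion step. What you have written there is not a proof but a description of the \emph{shape} of an argument, with the key technical content---the construction of the contracting homotopy and the control of higher differentials---explicitly deferred back to \cite{TWspherical}. Filtering by the number of $\epsilon/\omega$-legs and then by vertices is a reasonable opening move, but the actual work in \cite{TWspherical} is substantially more delicate than ``gluing legs back together'': one must handle the interaction between the hairy inclusion and the special-vertex splitting uniformly across all genera and leg configurations, and the homotopy is not simply the naive adjoint to splitting. As written, your proposal establishes that the cone cohomology is \emph{at least} two-dimensional, but the upper bound remains a black box.
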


From this we can easily deduce the following tadpole-free variant:
\begin{cor}\label{cor:TW tadpolefree}
    The mapping cone of the inclusion $\HGC\to (\sX^{conn},\delta_s)$ has three-dimensional cohomology, spanned by three classes whose projections to $(\sX^{conn},\delta_s)$ are $T$ as above, and 
    \begin{align*}
        L^\omega&=
\begin{tikzpicture}[baseline=-.65ex]
\node (v) at (0,0) {$\epsilon$};
\node (w) at (1,0) {$\omega$};
\draw (v) edge (w);
\end{tikzpicture}
&
L^\epsilon&=
\begin{tikzpicture}[baseline=-.65ex]
\node (v) at (0,0) {$\epsilon$};
\node (w) at (1,0) {$\epsilon$};
\draw (v) edge (w);
\end{tikzpicture}\, .
    \end{align*}
    respectively.
\end{cor}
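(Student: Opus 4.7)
The plan is to deduce Corollary~\ref{cor:TW tadpolefree} from Theorem~\ref{thm:TW mapping cone} by comparing tadpole-free and tadpole-ful versions of the mapping cone via a short exact sequence. Writing $C^{\mathrm{free}} = \mathrm{Cone}(\HGC \hookrightarrow \sX^{conn})$ and $C^{\mathrm{full}} = \mathrm{Cone}(\HGC^{tp} \hookrightarrow \sX^{tp,conn})$, and letting $J \subset C^{\mathrm{full}}$ denote the subcomplex spanned by graphs having at least one tadpole at a non-special vertex, one has a short exact sequence
\[
0 \to J \to C^{\mathrm{full}} \to C^{\mathrm{free}} \to 0.
\]

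First I would verify directly that $T$, $L^\omega$, $L^\epsilon$ represent cocycles in $C^{\mathrm{free}}$. For $T$ this is part of Theorem~\ref{thm:TW mapping cone}. For $L^\omega$, in the pre-blown-up picture this is the special vertex equipped with a single tadpole, one of whose ends is marked; applying $\delta_s^\circ$ produces the ``hairy graph with tadpole'' $D$ (with an $\omega$-leg), which has a tadpole at a non-special vertex and is therefore killed under the projection $\sX^{tp,conn} \to \sX^{conn}$. The computation for $L^\epsilon$ is analogous, producing a variant $D'$ of $D$ with an $\epsilon$-decorated leg. None of $T$, $L^\omega$, $L^\epsilon$ lies in the image of $\HGC \to \sX^{conn}$ since $\HGC$ contains no $\epsilon$-decorated graphs.

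Next I would plug everything into the long exact sequence
\[
\cdots \to H^n(J) \to H^n(C^{\mathrm{full}}) \to H^n(C^{\mathrm{free}}) \to H^{n+1}(J) \to \cdots,
\]
using that the middle term is two-dimensional by Theorem~\ref{thm:TW mapping cone}, with classes projecting to $D$ and $T$. Since $\delta_s L^\omega = \pm D$ in $\sX^{tp,conn}$, the element $(D,\mp L^\omega) \in C^{\mathrm{full}}$ is a cocycle, so the class in $H(C^{\mathrm{full}})$ projecting to $D$ may be represented with $\sX^{tp,conn}$-component $\mp L^\omega$; upon projecting to $C^{\mathrm{free}}$ the $D$-part dies (it has a non-special tadpole) and we recover $\mp[L^\omega]$. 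Thus $[L^\omega]$ lies in the image of $H(C^{\mathrm{full}}) \to H(C^{\mathrm{free}})$, replacing the lost class $[D]$, while $[T]$ survives directly. The class $[L^\epsilon]$, on the other hand, cannot lie in this image (no class in $H(C^{\mathrm{full}})$ projects to a graph with two $\epsilon$-legs), so by exactness $[L^\epsilon]$ must map to a nontrivial element of $H^{\bullet+1}(J)$ under the connecting homomorphism; by construction this element is $\pm[D']$.

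The main technical step is thus to compute $H(J)$ in the relevant degrees, showing that it contributes exactly one extra class $[D']$ and no others. I would filter $J$ by the number of tadpoles, which is preserved by the vertex-splitting differential modulo double-edge vanishings, and analyze the resulting spectral sequence; on the $E^0$-page each fixed-tadpole-number slice resembles a decorated version of the tadpole-free hairy graph complex, to which the methods of \cite{TWspherical} used to prove Theorem~\ref{thm:TW mapping cone} can be adapted. This will be the main obstacle, as one must carefully track the new $\epsilon$/$\omega$-decorations and rule out any further classes beyond $[D]$ (which kills $[D_{\mathrm{full}}]$) and $[D']$ (which pairs with $[L^\epsilon]$). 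Combining these inputs with the long exact sequence then yields the claimed three-dimensional cohomology spanned by the stated classes.
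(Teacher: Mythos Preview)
Your strategy is correct and is a reorganization of the paper's own argument: both compare the tadpole-free and tadpole-ful situations via a long exact sequence. The paper, however, does not form a single subcomplex $J$ inside the cone; instead it treats the two factors separately, computing the mapping cones of the projections $\HGC^{tp}\to\HGC$ and $\sX^{tp,conn}\to\sX^{conn}$ individually, and then assembles the answer. This separation is slightly cleaner because each kernel is just ``graphs with at least one tadpole'' in a standard graph complex, without any cone structure to track.

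The place where your proposal diverges most significantly is the ``main technical step.'' You propose to compute $H(J)$ by filtering on the number of tadpoles and adapting the methods of \cite{TWspherical}. The paper bypasses this entirely by invoking \cite[Lemma~5]{AWZ}, which says directly that the cohomology of the tadpole-ful subcomplex is spanned by the graphs with a single trivalent vertex carrying a tadpole. This immediately gives the complete list $D$, $D^\omega$, $D^\epsilon$ (your $D'$ is the paper's $D^\epsilon$), after which the bookkeeping is trivial: $D$ is a nonzero class in $\HGC^{tp}$ not present in $\HGC$, while $D^\omega=\delta_s L^\omega$ and $D^\epsilon=\delta_s L^\epsilon$ are exact in $\sX^{tp,conn}$, so $L^\omega$ and $L^\epsilon$ become new closed non-exact elements in $\sX^{conn}$. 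Your spectral sequence would eventually reach the same conclusion, but citing \cite{AWZ} is both shorter and avoids the delicate degeneration analysis you anticipate.
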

\begin{proof}
We first compare the tadpole-free and tadpole-carrying versions of our complexes, that is, we study the mapping cones of the projections 
\begin{align*}
    \HGC^{tp}&\to \HGC & \sX^{tp, conn}&\to \sX^{conn}.
\end{align*}
in each case the map is surjective, so the mapping cone is quasi-isomorphic to the kernel, which is spanned by graphs that have at least one tadpole.
Following the arguments of \cite[Lemma 5]{AWZ} one sees that this complex has cohomology spanned by graphs that have a single vertex, carrying a tadpole, of valence 3.
The complete list of such graphs is as follows:
\begin{align*}
  D&= 
  \begin{tikzpicture}[baseline=-.65ex,every loop/.style={}]
      \node (v) at (0,0) {};
      \node [int] (w) at (1,0) {};
      \draw (v) edge (w) (w) edge[loop] (w);
      \end{tikzpicture}
  &
    D^\omega&= 
\begin{tikzpicture}[baseline=-.65ex,every loop/.style={}]
    \node (v) at (0,0) {$\omega$};
    \node [int] (w) at (1,0) {};
    \draw (v) edge (w) (w) edge[loop] (w);
    \end{tikzpicture}
&
D^\epsilon&=
\begin{tikzpicture}[baseline=-.65ex,every loop/.style={}]
    \node (v) at (0,0) {$\epsilon$};
    \node [int] (w) at (1,0) {};
    \draw (v) edge (w) (w) edge[loop] (w);
    \end{tikzpicture}.
\end{align*}
The cocycle $D$ is not exact in $\HGC^{tp}$.
Hence the cohomology of $\HGC^{tp}$ is one dimension larger than that of $\HGC$, with the additional dimension spanned by the class of $D^\omega$.

The cocycles $D^\omega$ and $D^\epsilon$ are both exact in $\sX^{tp, conn}$ since $D^\omega=\delta_s L^\omega$  and $D^\epsilon=\delta_s L^\epsilon$. 
Hence the cohomology of $\HGC$ is two dimensions larger than that of $\HGC^{tp}$, with the additional generators $L^\omega$ and $L^\epsilon$, which are closed elements in $\sX^{conn}$ but not in $\sX^{tp, conn}$.

Accounting for these (small) differences the corollary then follows easily from Theorem \ref{thm:TW mapping cone}.
\end{proof}

\bibliographystyle{amsalpha}

\end{document}